\newtheoremstyle{exampstyle}
{6pt} 
{6pt} 
{\it} 
{} 
{\bfseries} 
{.} 
{.5em} 
{} 
\theoremstyle{exampstyle}
\newtheorem{theorem}{Theorem}[section]
\newtheorem{proposition}[theorem]{Proposition}
\newtheorem{lemma}[theorem]{Lemma}
\newcommand{\E}{{\mathbb E}}
\newcommand{\Q}{{\mathbb Q}}
\newcommand{\R}{{\mathbb R}}
\renewcommand{\P}{{\mathbb P}}
\newcommand{\A}{{\mathcal{A}}}
\newcommand{\B}{{\mathcal{B}}}
\newcommand{\C}{{\mathcal{C}}}
\newcommand{\G}{{\mathcal{G}}}
\newcommand{\F}{{\cal F}}
\newcommand{\samp}{{\mathcal{X}}}
\newcommand{\finiteset}{\mathbf{V}}
\newcounter{rcnt}[section]
\newcommand{\set}[2]{\left\{ #1  \;\middle\vert\; #2 \right\}}
\newcommand{\cN}{{\mathcal N}}
\newcommand{\cG}{{\mathcal G}}
\newcommand{\cK}{{\mathcal K}}
\newcommand{\eps}{{\varepsilon}}
\newcommand{\empdes}{{\mathbb{P}_n}}
\newcommand{\disdes}{{\mathbb{P}}}
\newcommand{\boucon}{{\mathcal{C}^B(\Omega)}}
\newcommand{\metemp}{{\ell_{\empdes}}}
\newcommand{\metdis}{{\ell_{\disdes}}}
\newcommand{\ball}{{\mathfrak{B}}}
\newcommand{\balld}{{\mathfrak{B}_d}}
\newcommand{\one}{{\mathbf{1}}}
\newcommand{\lincon}{{\mathfrak{C}}}
\newcommand{\lin}{{\mathfrak{L}}}
\newcommand{\minmax}{{\mathfrak{R}}}
\newcommand{\grid}{\mathcal{S}}
\newcommand{\twopiece}[4]{\left\{\begin{array}{ll}{#1}&{#2}\\{#3}&{#4}\end{array}\right.}
\def\qt#1{\qquad\text{#1}}
\def\argmin{\mathop{\rm argmin}}
\def\argmax{\mathop{\rm argmax}}
\begin{document}
\begin{frontmatter}
\title{Convex Regression in Multidimensions: Suboptimality of Least
  Squares Estimators}
\runtitle{Convex Regression in multidimensions}

\begin{aug}
\author[A]{\fnms{Gil}~\snm{Kur}\ead[label=e3]{gil.kur@inf.ethz.ch}},
\author[B]{\fnms{Fuchang}~\snm{Gao}\ead[label=e1]{fuchang@uidaho.edu}},
\author[C]{\fnms{Adityanand}~\snm{Guntuboyina}\ead[label=e2]{aditya@stat.berkeley.edu}},
\and
\author[D]{\fnms{Bodhisattva}~\snm{Sen}\ead[label=e4]{bodhi@stat.columbia.edu}}


\runauthor{Kur, G., Gao, F., Guntuboyina, A. and Sen, B.}

\address[A]{Institute for Machine Learning, ETH Z\"{u}rich\printead[presep={,\ }]{e3}}

\address[B]{Department of Mathematics, University of Idaho\printead[presep={,\ }]{e1}} 

\address[C]{Department of Statistics, University of California Berkeley\printead[presep={,\ }]{e2}}

\address[D]{Department of Statistics, Columbia University\printead[presep={,\ }]{e4}}
\end{aug}

\begin{abstract}
Under the usual nonparametric regression model with Gaussian errors,
Least Squares Estimators (LSEs) over natural subclasses of convex
functions are shown to be suboptimal for estimating a $d$-dimensional
convex function in squared error loss when the dimension $d$ is 5 or 
larger. The specific function classes considered include: (i) bounded
convex functions supported on a polytope (in random design), (ii)
Lipschitz convex functions supported on any convex domain (in random
design),  (iii) convex functions supported on a polytope (in fixed
design). For each of these classes, the risk of the LSE is proved to 
be of the order $n^{-2/d}$ (up to logarithmic factors) while the minimax
risk is $n^{-4/(d+4)}$, when $d \ge 5$. In addition, the first rate of
convergence results (worst case and adaptive) for the unrestricted
convex LSE are established in fixed-design for polytopal domains for
all $d \geq 1$. Some new metric entropy results for convex functions
are also proved which are of independent interest.    
\end{abstract}

\begin{keyword}[class=MSC]
\kwd[Primary ]{62G08}
\end{keyword}

\begin{keyword}
  \kwd{Adaptive risk bounds}
  \kwd{bounded convex regression}
  \kwd{Dudley's entropy bound}
  \kwd{Lipschitz convex regression}
  \kwd{lower bounds on the risk of least squares estimators}
  \kwd{metric entropy}
  \kwd{nonparametric maximum likelihood estimation}
  \kwd{Sudakov minoration}
\end{keyword}
\end{frontmatter}

\section{Introduction}\label{intro}
The main goal of this paper is to show that nonparametric
Least Squares Estimators (LSEs) associated with the constraint of
convexity can be minimax 
suboptimal when the underlying dimension is 5 or larger. Specifically,
we consider regression over a variety of different classes of convex
functions (and covariate designs) and find lower bounds on the rates
of convergence of the corresponding least squares estimator over each
class, and show that, when $d \geq 5$, these rates of convergence do
not match the minimax rate for the given class.  

We work in
the standard nonparametric regression setting for estimating
an unknown \textit{convex} function $f_0 :  \Omega \rightarrow \R$
defined on a known full-dimensional compact 
convex domain $\Omega \subseteq \R^d$ ($d \geq 1$) from observations 
$(X_1, Y_1), \dots, (X_n, Y_n)$ generated via the model: 
\begin{equation}\label{eq:RegMdl}
Y_i = f_0(X_i) + \xi_i, \qquad \mbox{for } i = 1,\ldots, n,
\end{equation}
where $\xi_1,\ldots, \xi_n$ are i.i.d. errors having the
$N(0,\sigma^2)$  distribution, and design points $X_1, \dots, X_n \in
\Omega$ that may be fixed or random. This problem is known as {\it convex 
  regression} and has a long history 
in statistics and related fields. Standard references are
\cite{Hildreth54, HanPled76, GJW01, groeneboom2014nonparametric, SS11,
  K08, LM12, balazs2016convex}, and applications of convex
regression can be found in \cite{Varian82, Varian84, AllonEtAl07,
  Matzkin91, AD03, K11, T10}.

The Least Squares Estimator (LSE) over a class $\F$ of functions on
$\Omega$ is defined as any minimizer of the least squares criterion
over $\F$: 
\begin{equation}\label{genlse}
  \hat{f}_n(\F) \in \argmin_{f \in \F} \sum_{i=1}^n \left(Y_i - f(X_i)
  \right)^2. 
\end{equation}
In order to introduce the specific
LSEs studied in this paper, consider the following four function
classes: 
\begin{enumerate}
\item $\C_L^B(\Omega)$: class of all convex functions on
  $\Omega$ which are uniformly Lipschitz with Lipschitz constant $L$
  and uniformly bounded by $B$. 
\item $\C_L(\Omega)$: class of all convex functions on $\Omega$ that
  are uniformly Lipschitz with Lipschitz constant $L$. There is no
  uniform boundedness assumption on functions in this class. 
\item $\C^B(\Omega)$: class of all convex functions on $\Omega$ that
  are uniformly bounded by $B$. There is no Lipschitz assumption on
  functions in this class. 
\item $\C(\Omega)$: class of all convex functions on $\Omega$. 
\end{enumerate}
Throughout the paper, we assume that $L$ and $B$ are positive constants not
depending on the sample size $n$. We study LSEs over each of the above
function classes and we use the following terminology for these
estimators:
\begin{enumerate}
\item $\hat{f}_n(\C_L^B(\Omega))$: Bounded Lipschitz 
  Convex LSE,
\item  $\hat{f}_n(\C_L(\Omega))$: Lipschitz Convex LSE,
\item   $\hat{f}_n(\C^B(\Omega))$: Bounded Convex LSE,
\item   $\hat{f}_n(\C(\Omega))$: Unrestricted Convex LSE.  
\end{enumerate}
Strictly speaking, $\hat{f}_n (\C_L^B(\Omega))$ should be called the
``Uniformly Bounded Uniformly Lipschitz Convex LSE'' but we omit the
word ``uniformly'' throughout for brevity. Table \ref{nomenclature}
gives references to some previous papers that studied each of these
estimators. For the unrestricted convex LSE, Table \ref{nomenclature} lists
only the papers focusing on the multivariate case (univariate
investigations are in \cite{HanPled76, Dumbgen04, GJW01, CW16, GS16,
  GSvex, CGS15, B16,   C16}). 

\begin{table}
    \begin{tabular}{ | l | l | l | l |}
    \hline
    Function class & LSE & Name & References \\ \hline
    $\C_L^B(\Omega)$ & $\hat{f}_n(\C_L^B(\Omega))$ & Bounded Lipschitz
                                Convex LSE&
                                            \cite{nemirovskij1984signal,
                                            nemirovskij1985rate,
                                            Mammen91, Balazs15} \\ \hline
$\C_L(\Omega)$     & $\hat{f}_n(\C_L(\Omega))$ & Lipschitz Convex LSE
                                & \cite{Lim14, mazumder2019computational} \\ \hline
$\C^B(\Omega)$ & $\hat{f}_n(\C^B(\Omega))$ & Bounded Convex LSE & \cite{HW16}\\
      \hline 
      $\C(\Omega)$ & $\hat{f}_n(\C(\Omega))$ & Unrestricted Convex LSE &
                                                                 \cite{SS11, K08, LM12, mazumder2019computational,
  chen2020multivariate}\\ 
      \hline 
    \end{tabular}
    \caption{A listing of the LSEs studied in this paper}
    \label{nomenclature}
  \end{table}

This paper studies the performance of these LSEs and compares their
rates of convergence with the corresponding minimax rates. For our
results, we assume throughout the paper that the convex body $\Omega$
(which is the domain of the unknown function $f_0$) is translated and
scaled so that 
\begin{equation}
  \label{OmegaAss}
r_d \balld \subseteq \Omega \subseteq  \balld  
\end{equation}
where $\balld$ is the unit ball in $\R^d$ and $r_d$ is a positive constant 
depending on $d$ alone. In particular, this assumption implies that
$\Omega$ has diameter at most 2 and also volume at most 1. Also the
diameter and volume of $\Omega$ are bounded from below by a constant
depending on $d$ alone. The classical John's theorem (see e.g.,
\cite[Lecture 3]{ball1997elementary}) shows that for every convex
body $\Omega$, there exists an affine transformation such that
\eqref{OmegaAss} holds for the transformed body with $r_d = 1/d$. In
the rest of the paper, whenever we say that a result holds 
for ``any convex body $\Omega$'' (such as in Tables
\ref{MinimaxRatesRandomDesign}, \ref{EntropyRates}, \ref{LSEexisting}, 
\ref{randomdesignsummary} and \ref{fixeddesignsummary}), we mean ``any
convex body $\Omega$ satisfying \eqref{OmegaAss}''.

We focus first on the random design setting  where the design
points 
$X_1, \dots, X_n$ are assumed to be independent having the uniform
distribution $\P$ on $\Omega$ (see Subsection \ref{gendensran} for a discussion of more general design assumptions in random design), and work with the loss function    
\begin{equation}\label{ranloss}
  \ell_{\P}^2 \left(f, g \right)  := \int_{\Omega} (f - g)^2 d\P. 
\end{equation}
The minimax rate for a function class $\F$ in the above setting is: 
\begin{equation*}
\minmax_n^{\mathrm{random}}(\F) :=  \inf_{\breve{f}_n} \sup_{f_0 \in \F}
\E_{f_0} \ell^2_{\P}(\breve{f}_n, f_0) 
\end{equation*}
where $\E_{f_0}$ denotes expectation with respect to the joint
distribution of all the observations $(X_1, Y_1), \dots,
(X_n,Y_n)$ when $f_0$ is the true regression function (see
\eqref{eq:RegMdl}), and the infimum is over all estimators
$\breve{f}_n$.

Minimax rates for the above function classes 1--4  are presented
in  Table \ref{MinimaxRatesRandomDesign}. These results are known
except  perhaps for $\C_L(\Omega)$ which we state and prove in this paper as
Proposition \ref{MinimaxLips}. These minimax rates can also be derived
from the corresponding metric entropy rates which are given in Table
\ref{EntropyRates}. Recall that the $\epsilon$-metric entropy of a
function class $\F$ with respect to a metric $\ell$ is defined as the 
logarithm of the smallest number $N(\epsilon, \F, \ell)$ of
 closed balls of radius $\epsilon$ whose union contains $\F$. Classical
results from Yang and Barron \cite{YangBarron} imply that the minimax rate (for the
loss $\ell_{\P}^2$) for a nonparametric function class $\F$
satisfying some natural assumptions (see \cite[Subsection
3.2]{YangBarron} for a listing of these assumptions which are
satisfied for $\C_L^B(\Omega)$ and $\C^B(\Omega)$)  equals
$\epsilon^2_n$ where 
$\epsilon_n$ solves the equation: 
\begin{equation} \label{yb}
n \epsilon^2 \asymp \log N(\epsilon, \F, \ell_{\P}).  
\end{equation}

  \begin{table}
    \begin{tabular}{ | l | l | l | l |}
    \hline
    Class & Minimax Rate & Assumption on $\Omega$ & Reference \\ \hline
    $\C_L^B(\Omega)$ & $n^{-4/(d+4)}$ & any convex body &
                                                          \cite{nemirovskij1985rate}, \cite[Theorem 
4.1]{Balazs15} \\ \hline
$\C_L(\Omega)$     & $n^{-4/(d+4)}$ & any convex body & this paper
                                                        (Proposition \ref{MinimaxLips}) \\ \hline
$\C^B(\Omega)$ & $n^{-4/(d+4)}$ & polytope & \cite[Theorems 2.3, 2.4]{HW16}\\
      \hline 
      $\C^B(\Omega)$ & $n^{-2/(d+1)}$ & ball & \cite[Theorems 2.3, 2.4]{HW16}\\
      \hline 
    \end{tabular}
    \caption{Minimax rates in the loss \eqref{ranloss} under the random design
      setting where $X_1, \dots, X_n$ are i.i.d. from the uniform
      distribution on $\Omega$. For the rate in the last
      row, we assume $d \geq 2$.}
    \label{MinimaxRatesRandomDesign}
  \end{table}

  \begin{table}
    \begin{tabular}{ | l | l | l | l | l | }
    \hline
    Class & $\epsilon$-entropy & $\epsilon$-bracketing entropy & Assumption on $\Omega$ & Reference \\ \hline
    $\C_L^B(\Omega)$ & $\epsilon^{-d/2}$ & $\epsilon^{-d/2}$ & any convex body &
                                                          \cite{Bronshtein76} \\ \hline
$\C_L(\Omega)$     & infinite & infinite & any convex body & ---\\ \hline
$\C^B(\Omega)$ & $\epsilon^{-d/2}$ & $\epsilon^{-d/2}$ & polytope &
                                                                    \cite{gao2017entropy,
                                                                    doss2020bracketing}\\
      \hline 
      $\C^B(\Omega)$ & $\epsilon^{-(d-1)}$ & $\epsilon^{-(d-1)}$ & ball
                                                  &\cite{gao2017entropy}\\ 
      \hline 
    \end{tabular}
    \caption{$\epsilon$-entropy and $\epsilon$-bracketing entropy
      rates in the metric $\ell_{\P}$ for each of the convex function
      classes. For the last row, we assume $d \geq 2$.}   
    \label{EntropyRates}
  \end{table}

For $\C_L^B(\Omega)$, the minimax rate is $n^{-4/(d+4)}$ which is the
square of the solution for $\epsilon$ in \eqref{yb}:  $n\epsilon^2 =
\epsilon^{-d/2}$. For $\C_L(\Omega)$, the metric entropy is infinite
as this class contains every constant function and hence is not
totally bounded. But the minimax rate is still $n^{-4/(d+4)}$ and
can be easily derived as a consequence of the minimax rate result for
$\C_L^L(\Omega)$ (the argument is given in the proof of Proposition
\ref{MinimaxLips}).

For $\C^B(\Omega)$ with $d \geq 2$, as shown by Han and Wellner
\cite{HW16} and Gao and Wellner \cite{gao2017entropy} respectively, the minimax
rate and the metric entropy rate 
change depending on whether $\Omega$ is polytopal or has smooth
boundary. Here (and in the rest of the paper), when we say that a
specific set $S$ is a ``polytope'', we assume that the number of
facets or extreme points of $S$ is bounded by a constant not
depending on the sample size $n$. If the number of facets is allowed
to grow with $n$, then  polytopes can approximate general convex
bodies arbitrarily well in which case it is not meaningful to give
separate rates for polytopes and other convex bodies. 

When $\Omega$ is a polytope, the minimax rate for $\C^B(\Omega)$
equals $n^{-4/(d+4)}$ corresponding to the $\epsilon$-entropy of
$\epsilon^{-d/2}$. In contrast, when $\Omega$ is the unit ball, the
minimax rate is $n^{-2/(d+1)}$ corresponding to the $\epsilon$-entropy
of $\epsilon^{-(d-1)}$. The larger metric entropy for the ball
(compared to polytopes) is driven by the curvature of the boundary and
the lower bound is proved by considering indicator-like functions of
spherical caps (see \cite[Subsection 2.10]{gao2017entropy} for
the metric entropy proofs). More high level intuition on the
differences between the polytopal case and the ball case is provided
in Subsection \ref{ballcase}.    



We now ask whether the LSEs (as defined via \eqref{genlse}) achieve the corresponding minimax
rates. The supremum risk of an estimator $\breve{f}$ on a
function class $\F$ is defined as
\begin{equation*}
  R_n(\breve{f}_n, \F) := \sup_{f \in \F} \E_f
  \ell_{\P}^2(\breve{f}_n, f).
\end{equation*}
Existing results for the convex regression LSEs are described in Table
\ref{LSEexisting} where the rate $r_{n, d}$ is given by  
\begin{equation}\label{balazs}
r_{n, d} := \left\{
  \begin{array}{lr}
  n^{-4/(d+4)}   & : d \leq 4\\
    n^{-2/d} & : d \geq 5. 
  \end{array}
\right.
\end{equation}

  \begin{table}
    \begin{tabular}{ | l | l | l | l |}
    \hline
   Supremum Risk & Result & Assumption on $\Omega$ & Reference \\ \hline
$R_n(\hat{f}_n(\C_L^B(\Omega)),
                                  \C_L^B(\Omega))$ & $\lesssim^* r_{n, d}$ & any convex body & 
                                                          \cite{Balazs15}
      \\ \hline 
$R_n(\hat{f}_n(\C_L(\Omega)),
                                  \C_L(\Omega))$    &
                                                                $\lesssim^*
                                                                r_{n,
                                                                d}$  & any convex
                                                         body &
                                                                      \cite{Lim14,
                                                                      mazumder2019computational}
      \\ \hline 
$R_n(\hat{f}_n(\C^B(\Omega)),
                                  \C^B(\Omega))$ &
                                                                $\lesssim^*
                                                                r_{n,
                                                   d}$ & polytope & \cite{HW16}\\
      \hline 
$R_n(\hat{f}_n(\C^B(\Omega)),
                                  \C^B(\Omega))$ &
                                                          $n^{-2/(d+1)}$
                          & ball & \cite{kur2019optimality}\\ 
      \hline 
    \end{tabular}
    \caption{Existing LSE rates in the loss \eqref{ranloss}
      for random design (for the last
      row, we take $d \geq 2$). Here $\lesssim^*$ refers to an upper bound
      with multiplicative factors that are logarithmic in $n$. We
      prove, in this paper, that the rate $r_{n, d}$ is actually tight
      for each of the first three rows.} 
    \label{LSEexisting}
  \end{table}
The upper bound of $r_{n, d}$ in the first three rows of
Table \ref{LSEexisting} is derived from standard upper bounds on the
performance of LSEs \cite{BM93,  vdG00} which state that
the LSE risk is bounded from above by $\delta_n^2$ where $\delta_n$
solves the equation: 
\begin{equation}\label{LSEeq}
  \sqrt{n} \delta^2 \asymp \int_{\delta^2}^{\delta} \sqrt{\log N_{[\,]}(\epsilon,
  \F, \ell_{\P})} d\epsilon. 
\end{equation}
Here $N_{[\,]}(\epsilon, \F, \ell_{\P})$ denotes the
$\epsilon$-bracketing number of $\F$ under the $\ell_{\P}$ metric
which is defined as the smallest number of $\epsilon$-brackets:
\begin{equation*}
  [\underline{f}, \bar{f}] := \left\{g : \underline{f}(x) \leq g(x)
    \leq \bar{f}(x) \text{ for all } x \right\} ~~~~~\text{ with }
  \ell_{\P}(\underline{f}, \bar{f}) \leq \epsilon
\end{equation*}
needed to cover $\F$. The logarithm of $N_{[\,]}(\epsilon, \F,
\ell_{\P})$ denotes the $\epsilon$-bracketing entropy of
$\F$. Gao and Wellner \cite{gao2017entropy} (see also
Doss \cite{doss2020bracketing}) proved bracketing entropy 
numbers for the  
convex function classes; these are given in Table \ref{EntropyRates}
and they coincide with the metric entropy rates. Plugging in
$\epsilon^{-d/2}$ for the bracketing entropy in \eqref{LSEeq} and
solving the resulting equation in $\delta$ leads to
the rate $r_{n, d}$ (although $\C_L(\Omega)$ has
infinite entropy, results for this class can be derived by working
instead with $\C_L^L(\Omega)$ which has entropy
$\epsilon^{-d/2}$). The split in $r_{n, d}$ for $d \leq 4$ and $d \geq
5$ occurs 
because $\int_{\delta^2}^{\delta} \epsilon^{-d/4} d\epsilon$ depends
differently on $\delta$ for $d \leq 4$ compared to $d \geq 5$. The same
dimension-dependent split in the upper bounds for the rate can be seen
in  \cite{BM93} and \cite[Chapter 9]{vdG00} for  certain other LSEs on
function classes with smoothness restrictions.

It is interesting to note that the last row in Table
\ref{LSEexisting} (corresponding to the case when $\Omega$ is the unit
ball) does not have any 
dimension-dependent split in the rate (it equals $n^{-2/(d+1)}$ for
every $d \geq 2$). This result is 
due to Kur et al. \cite{kur2019optimality}. More details on this case
are given in Subsection \ref{ballcase}. 


We are now ready to describe the main results of this paper. Tables
\ref{MinimaxRatesRandomDesign} and \ref{LSEexisting} together indicate
that the minimax rate for the first three rows equals $n^{-4/(d+4)}$
which also coincides with $r_{n, d}$ for $d \leq 4$. This immediately
implies 
minimax optimality (up to log factors) of the LSE over each function
class in the first three rows of Table \ref{MinimaxRatesRandomDesign}
when $d \leq 4$. However for $d \geq 5$, there is a significant gap
between the minimax rate $n^{-4/(d+4)}$ and $r_{n, d} = n^{-2/d}$. The
question of whether this gap is real (implying suboptimality of the
LSEs) or merely an artifact of some loose argument in the derived
upper bounds was previously open. We settle this question in this
paper by proving that the LSE for each function class in the first
three rows of Table \ref{MinimaxRatesRandomDesign} is minimax
suboptimal for $d \geq 5$. More precisely, in Theorem
\ref{implb.blse}, we prove that $n^{-2/d} 
(\log n)^{-4(d+1)/d}$ is a lower bound (up to a multiplicative factor
independent of $n$) on each supremum risk in the first three rows of
Table \ref{LSEexisting} for $d \geq 5$.

Table \ref{randomdesignsummary} has a summary of the minimax rates and
rates exhibited by the LSE (along with minimax optimality and
suboptimality of the LSE) in our random design setting.  

\begin{table}[h]
\centering
\small
\begin{tabular}{|p{1.15cm}|p{2.5cm}|p{1.5cm}|p{3cm}|p{3cm}|}
\hline
Class $\mathcal{F}$ & Assumption on $\Omega$ & Minimax Rate & Supremum
                                                              LSE
                                                              \newline
                                                              risk
                                                              $R_n(\hat{f}_n(\mathcal{F}),
                                                              \mathcal{F})$
                                                              (upto logs)
  & Minimax Optimality of $\hat{f}_n(\F)$ (upto logs) \\
\hline
$\mathcal{C}_L^B(\Omega)$ & any convex body & $n^{-4/(d+4)}$ & 
$n^{-4/(d+4)}$, $d \leq 4$ 
\newline
$ n^{-2/d}$, $d \geq 5$
& 
Optimal for $d \leq 4$ 
\newline
Suboptimal for $d \geq 5$
\\
  \hline
$\mathcal{C}_L(\Omega)$ & any convex body & $n^{-4/(d+4)}$ & 
$n^{-4/(d+4)}$, $d \leq 4$ 
\newline
$n^{-2/d}$, $d \geq 5$
& 
Optimal for $d \leq 4$ 
\newline
Suboptimal for $d \geq 5$
\\
 \hline
$\mathcal{C}^B(\Omega)$ & polytope & $n^{-4/(d+4)}$ & 
$n^{-4/(d+4)}$, $d \leq 4$ 
\newline
$n^{-2/d}$, $d \geq 5$
& 
Optimal for $d \leq 4$ 
\newline
Suboptimal for $d \geq 5$
\\
  \hline
$\mathcal{C}^B(\Omega)$ & ball ($d \geq 2$) & $n^{-2/(d+1)}$ & 
$n^{-2/(d+1)}$
& 
Optimal for $d \geq 2$ 
\\
\hline  
\end{tabular}
\caption{Summary of the minimax and LSE rates, and
  optimality/suboptimality of the LSE in our random design setting}
\label{randomdesignsummary}
\end{table}

Our proof of minimax suboptimality for the LSE is based on the
following idea. Suppose $f_0(x) := \|x\|^2$ and let $\tilde{f}_k$ be a
piecewise affine convex approximation to $f_0$ with $k$ affine
pieces as described in the statment of Lemma \ref{ballapp}. It is then
well-known (see e.g., \cite[Lemma 4.1]{Balazs15} or 
\cite{BronIvan}) that the $\ell_{\P}$ distance  between $f_0$
and $\tilde{f}_k$ is of order $k^{-2/d}$. If we now set $k$ to be of
order $\sqrt{n}$, then $\ell_{\P}(\tilde{f}_k, f_0) \sim n^{-1/d}$, or
equivalently $\ell_{\P}^2(\tilde{f}_k, f_0) \sim n^{-2/d}$. Note that
$n^{-2/d}$ is much larger than the minimax rate $n^{-4/(d+4)}$ for $d
\geq 5$. We study the behavior of each convex LSE $\hat{f}_n$ (in the
first 
three rows of Table \ref{MinimaxRatesRandomDesign}) when the true
function is $\tilde{f}_k$ for $k \sim \sqrt{n}$, and show (in Theorem
\ref{implb.blse}) that $\E \ell_{\P}^2(\hat{f}_n, \tilde{f}_k)$ is
bounded below by a constant multiple of $n^{-2/d} (\log
n)^{-4(d+1)/d}$. In other words, we prove that the LSEs are at a
squared distance from the true function $\tilde{f}_k$ that is much
larger than the minimax rate when $d \geq 5$. Our proof techniques are
outlined in Section \ref{pfskeran}. We are unable to
definitively say why 
the LSEs are behaving this way when the true function is piecewise
affine with $\sqrt{n}$ pieces. We believe this is due to
overfitting, probably the LSEs have many more affine pieces than $k
\sim \sqrt{n}$ in this case and are actually closer to the quadratic
function $f_0$.  

It has previously been observed that LSEs and related Empirical Risk 
Minimization procedures 
can be  suboptimal for function classes with large entropy
integrals. For example, \cite[Section 4]{BM93} designed certain pathological large function
classes where the LSE is provably minimax suboptimal. The fact that
minimax suboptimality also holds for natural function classes such as
$\C_L^B(\Omega)$ and $\C_L(\Omega)$ (for $d \geq 5$) is
noteworthy.

In contrast to our suboptimality results, the last row of Table
\ref{LSEexisting} and Table \ref{randomdesignsummary} correspond to
the class $\C^B(\balld)$ (here 
$\balld$ is the unit ball) for which the LSE is minimax optimal for
all $d \geq 2$ (this result is due to Kur et
al. \cite{kur2019optimality}). As 
already mentioned, $\C^B(\balld)$ has much larger metric entropy
(compared to $\C^B(\Omega)$ for polytopal $\Omega$) that is driven by
the curvature of $\balld$; one can understand this from the proofs of
the metric entropy lower bounds on $\C^B(\balld)$ and  $\C^B(\Omega)$
for polytopal $\Omega$ in \cite{gao2017entropy} (specifically see
\cite[proof of Theorem 1(i) and proof of Theorem
4]{gao2017entropy}). High-level intuition on the differences
between the ball and polytopal cases is given in Subsection
\ref{ballcase}. Isotonic regression is another shape constrained
regression problem 
where the LSE is minimax optimal for all $d$ (see
Han et al. \cite{han2019isotonic}). The class of coordinatewise
monotone 
functions on $[0, 1]^d$ is similar to $\C^B(\balld)$ in that its
metric entropy is driven by well-separated subsets of the domain $[0, 1]^d$ (see
\cite[Proof of Proposition 2.1]{gao2007entropy}). Other examples of
such classes where the LSE is optimal for all dimensions can be found
in Han \cite{han2019global}.

\subsection{Fixed-design results for the unrestricted convex
  LSE} \label{fd.intro} 

So far we have not discussed any rates of convergence for the
unrestricted Convex LSE 
$\hat{f}_n(\C(\Omega))$. No such results exist in the literature; 
it appears difficult to prove them in the random design setting in
part because the
general random-design theory of Empirical Risk Minimization is largely
restricted to uniformly bounded function classes. In this paper, we
prove rates for 
$\hat{f}_n(\C(\Omega))$ for a specific \textit{fixed-design} setting. We now
assume that $X_1, \dots,  X_n$ 
form a fixed regular rectangular grid intersected with $\Omega$, and
the loss function is  
\begin{equation}\label{ldef}
  \ell_{\P_n}^2(f, g) := \int \left(f - g \right)^2 d\P_n =
  \frac{1}{n} \sum_{i=1}^n \left(f(X_i) - g(X_i) \right)^2
\end{equation}
with $\P_n$ being the (non-random) empirical distribution of $X_1,
\dots, X_n$. This setting is also quite standard in nonparametric
function estimation (see e.g.,  \cite{nemirovski2000}). We only
work with polytopal $\Omega$. Fixed-design
rates for the unrestricted convex LSE when $\Omega$ is nonpolytopal
are likely more complicated (as indicated in Subsection \ref{fdnp}) and
are not  addressed in this paper.   

In this setting, we are able to prove uniform rates of convergence for
$\hat{f}_n(\C(\Omega))$ over a function class that is larger than the
function classes considered so far. This function class is given by: 
\begin{equation}\label{FlinClass}
  \F^{\lin}(\Omega) := \left\{f_0 \text{ convex on } \Omega : \inf_{g \in
      \mathcal{A}(\Omega)} \ell_{\P_n}(f_0, g) \leq \lin \right\} 
\end{equation}
where $\A(\Omega)$ denotes the class of all \textit{affine} functions
on $\Omega$, and $\lin$ is a fixed positive constant. It can be easily
shown (see Section \ref{fdresults} for details) that, as a function
class, $\F^{\lin}(\Omega)$ is larger than both $\C^{\lin}(\Omega)$
and $\C_{\lin}(\Omega)$, which means that $f_0 \in \F^{\lin}(\Omega)$
is a weaker assumption compared to uniform boundedness ($f_0
\in \C^{\lin}(\Omega)$) and also  compared to uniform Lipschitzness
($f_0 \in \C_{\lin}(\Omega)$). Further $\F^{\lin}(\Omega)$
satisfies the natural invariance property: $f_0 \in
\F^{\lin}(\Omega)$ if and only if $f_0 - g_0 \in \F^{\lin}(\Omega)$
for every affine function $g_0$. This invariance is obviously
also satisfied by $\C(\Omega)$ but not by the other classes in Table
\ref{nomenclature}. 

In the aforementioned fixed-design setting, we prove, in Theorem
\ref{wcad}, that 
\begin{equation}\label{worstcaseratedef}
\sup_{f \in \F^{\lin}(\Omega)} \E_{f} \ell_{\P_n}^2
  \left(\hat{f}_n(\C(\Omega)), f
  \right)
\end{equation}
is bounded by the rate $r_{n, d}$ (see \eqref{balazs}) up to logarithmic factors. Thus the
unrestricted convex LSE achieves the rate $r_{n, d}$ under the assumption that
the true function $f_0 \in \F^{\lin}(\Omega)$. That this holds under
the weaker assumption $f_0 \in \F^{\lin}(\Omega)$ can be considered
an advantage of the unrestricted convex LSE over the Bounded or
Lipschitz convex LSE which require the stronger assumptions $f_0 \in
\C^{\lin}(\Omega)$ or $f_0 \in \C_{\lin}(\Omega)$.  

The minimax rate for $\F^{\lin}(\Omega)$ in the fixed-design setting
is defined by 
 \begin{equation}\label{minratedeffix}
   \minmax_n^{\mathrm{fixed}}(\F^{\lin}(\Omega)) :=
   \inf_{\breve{f}_n} \sup_{f \in \F^{\lin}(\Omega)} 
\E_f \ell^2_{\P_n}(\breve{f}_n, f) 
\end{equation}
where $\ell_{\P_n}$ is the loss function \eqref{ldef}. In Proposition \ref{minratelin}, we prove that
$\minmax_n^{\mathrm{fixed}}(\F^{\lin}(\Omega))$ equals 
$n^{-4/(d+4)}$ (up to log 
factors) for all $d \geq 1$. This implies 
that the unrestricted convex LSE is, up to log factors, minimax optimal for $d
\leq 4$ for the class $\F^{\lin}(\Omega)$ (and consequently also for
the smaller classes $\C_{\lin}(\Omega)$ and $\C^{\lin}(\Omega)$).

For $d \geq 5$, we prove, in Theorem \ref{minsub}, that there exists
$f_0 \in \C^L_L(\Omega)$ 
where the risk of $\hat{f}_n$ is bounded from below by $n^{-2/d} (\log
n)^{-4(d+1)/d}$. This proves minimax suboptimality of the unrestricted
convex LSE for $d \geq 5$ over the class $\C_L^L(\Omega)$, and
consequently also over the larger function class $\C_L(\Omega)$,
$\C^L(\Omega)$, $\F^L(\Omega)$ (it is helpful to note here all these
function classes have, up to a logarithmic factor, the same minimax rate
because of Proposition \ref{minratelin} and Proposition
\ref{fixlobo}).  

In addition to these results, we also prove that
the rate of convergence of the unrestricted convex LSE can be faster than
$r_{n, d}$ when $f_0$ is piecewise  
affine. Specifically, we prove that when $f_0$ is a piecewise affine 
convex function on $\Omega$ with $k$ affine pieces, the risk of
$\hat{f}_n$ is bounded from above by
\begin{equation}\label{aknd}
a_{k, n, d} := \left\{
  \begin{array}{lr}
  \frac{k}{n}   & : d \leq 4\\
\left(\frac{k}{n} \right)^{4/d} & : d \geq 5
  \end{array}
\right.
\end{equation}
up to logarithmic factors. When $k$ is not too large, $a_{k, n, d}$ is
smaller than $r_{n, d}$ (which bounds
\eqref{worstcaseratedef}). Specifically, when $d \leq 4$, we have
$a_{k, n, d} = k/n \leq r_{n, d} = n^{-4/(d+4)}$ for $k \leq
n^{d/(d+4)}$, and when $d \geq 5$, we have $a_{k, n, d} = (k/n)^{4/d}
  \leq r_{n, d} = n^{-2/d}$ for $k \leq \sqrt{n}$. For such $k$, we
  say that the unrestricted convex LSE \textit{adapts} to the
  piecewise affine convex function $f_0$. In the random design
  setting, adaptive rates for the bounded convex LSE can be found in
  \cite{HW16} with a suboptimal dependence on $k$. 

It is especially interesting that the adaptive rate $a_{k, n,
  d}$ switches from being parametric for $d \leq 4$ to a slower
nonparametric rate for $d \geq 5$. For $d \geq 5$, we prove a lower
bound showing that the adaptive rate $(k/n)^{4/d}$ cannot be improved
for $k \lesssim \sqrt{n}$. Specifically, in Theorem \ref{implb}, we
prove, assuming $d \geq 5$, that for the piecewise affine convex
function $\tilde{f}_k$ given by Lemma \ref{ballapp}, the rate of the
unrestricted convex LSE is bounded from below by $a_{k, n, d} (\log
n)^{-4(d+1)/d}$ for every $k \lesssim \sqrt{n}$. This lower bound is
related to the minimax suboptimality of $\hat{f}_n(\C(\Omega))$
because when $k \sim \sqrt{n}$, the rate $(k/n)^{4/d}$ becomes
$n^{-2/d}$.   

\subsection{Paper Outline}
The rest of this paper is organized as follows. Minimax rate results
for convex regression are summarized in Section 
\ref{minimax}. These minimax rates are useful for gauging minimax
optimality and suboptimality of LSEs in convex regression. Rates of
convergence of the LSEs are given in Section
\ref{LseRates}. Subsection \ref{sec.mores} contains the main
minimax suboptimality result for the LSEs over the three classes
$\C_L^B(\Omega)$, $\C^B(\Omega)$ and $\C_L(\Omega)$ in the random
design setting. Section \ref{fdresults} contains results for the
unrestricted convex LSE in fixed design. Section \ref{pfskeran}
provides a sketch of the main ideas and ingredients behind the proofs
of the LSE rate results in Section \ref{LseRates}. Proofs of all
results can be found in the Appendices: Appendix \ref{minimax.proofs}
contains proofs of all results 
in Section \ref{minimax}, Appendix \ref{rd.proofs} contains proofs of
all results in Section \ref{sec.mores} (and the auxiliary results
stated in Subsection \ref{pfskeblse}) and Appendix \ref{fd.proofs}
contains proofs of all results in Section \ref{fdresults} (and the
auxiliary results stated in Subsection \ref{pfskefdr}). The paper ends
with a discussion section (see Section \ref{discu}) where some issues
naturally connected to our results are discussed.

\subsection{A note on constants underlying our rate
  results}\label{consta}
Our rate results often involve various constants which are generically
denoted by $c$ or $C$. These constants never depend on the sample size
$n$ but they often depend on various other parameters involved in the
problem (such as the dimension $d$, Lipschitz constant $L$, uniform
bound $B$, number of facets of the polytopal domain $\Omega$, etc.). We
have tried to indicate the dependence of the constant explicity using
notation such as $C_{d, B, L}$. We have not attempted to explicitly
specify the dependence of these constants on these parameters. Please
note that the value of these constants may change from occurrence to
occurrence. 

\section{Minimax Rates for Convex Regression}\label{minimax}

In this section, we provide more details for the minimax rate results
 mentioned in the Introduction. 

\subsection{Random Design}
Minimax rates for convex regression in random design are
given in Table \ref{MinimaxRatesRandomDesign}. The first result
is for the class $\C_L^B(\Omega)$ and, as indicated in Table
\ref{MinimaxRatesRandomDesign}, the minimax rate of $n^{-4/(d+4)}$ for
$\C_L^B(\Omega)$ was stated and proved in \cite[Theorem
4.1]{Balazs15}.  However, they additionally assumed that $\Omega$ is a
rectangle which is unnecessary and the same proof applies to every
convex body $\Omega$ satisfying \eqref{OmegaAss}. This is because the
metric entropy of $\epsilon^{-d/2}$ holds for $\C_L^B(\Omega)$ for
every convex body $\Omega$ satisfying \eqref{OmegaAss} as can be
readily seen from the proof of \cite[Theorem 6]{Bronshtein76}.

The next minimax result is for $\C_L(\Omega)$. Here also the rate is
$n^{-4/(d+4)}$ and it holds for every convex body $\Omega$ satisfying
\eqref{OmegaAss}. The upper bound here is slightly nontrivial
 because the class $\C_L(\Omega)$ has infinite metric
entropy as it contains 
all constant functions. We could not find a reference for this result
in the literature so we state it below and include a proof in Appendix
\ref{minimax.proofs}.  

\begin{proposition}\label{MinimaxLips}
  For every $\Omega$ satisfying \eqref{OmegaAss}, we have
  \begin{equation}\label{MinimaxLips.eq}
    c_{d, L, \sigma} n^{-4/(d+4)} \leq
    \minmax_n^{\mathrm{random}}(\C_L(\Omega)) \leq C_{d, L, \sigma}
    n^{-4/(d+4)}
  \end{equation}
  where $c_{d, L, \sigma}$ and $C_{d, L, \sigma}$ are positive
  constants depending only on  $d, L, \sigma$. 
\end{proposition}
Next are the results for $\C^B(\Omega)$ where the minimax rates change
depending on whether $\Omega$  is polytopal or not. When $\Omega$ is a
polytope, the minimax rate is $n^{-4/(d+4)}$ for all $d \geq 1$ and
when $\Omega$ is a ball, the rate is $n^{-2/(d+1)}$ for $d \geq
2$. These results (whose proofs can be found in \cite{HW16}) can be
derived as a consequence of the metric entropy rates in Table
\ref{EntropyRates} ($\epsilon^{-d/2}$ and $\epsilon^{-(d-1)}$
respectively) by solving the equation \eqref{yb}. 

\subsection{Fixed Design}\label{minfd}
For fixed design, we are mostly only able to prove results when $\Omega$ is a polytope (see Subsection \ref{fdnp} for some remarks on fixed-design results for non-polytopal $\Omega$). Specifically, we assume that
\begin{equation}\label{parallel}
  \Omega := \left\{x \in \R^d : a_i \leq v_i^T x \leq b_i \text{ for }
    i = 1, \dots, F \right\}
\end{equation}
for $F \geq 1$, unit vectors $v_1, \dots, v_F$ and
real numbers $a_1, \dots, a_F, b_1, \dots, b_F$. The number $F$ is
assumed to be bounded from above by a constant depending on $d$ 
alone. As in the rest of the paper, we also assume \eqref{OmegaAss}.

The design points $X_1,
\dots, X_n$ are assumed to form a fixed regular rectangular grid in
$\Omega$ and 
$Y_1, \dots, Y_n$ are generated according to
\eqref{eq:RegMdl}. Specifically, for $\delta > 0$, let 
\begin{equation}\label{frg}
  \grid := \left\{(k_1 \delta, \dots, k_d \delta) : k_i \in \mathbb{Z}, 1
    \leq i \leq d \right\}
\end{equation}
denote the regular $d$-dimensional $\delta$-grid in $\R^d$. We assume
that $X_1, \dots, X_n$ are an enumeration of the points in 
$\grid \cap \Omega$ with $n$ denoting the cardinality
of $\grid \cap \Omega$. By the usual volumetric argument and 
assumption \eqref{OmegaAss} , there exists 
a small enough constant $\kappa_d > 0$ such that whenever $0 < \delta \leq
\kappa_d$, we have
\begin{equation}\label{nlob}
  2 \leq c_d \delta^{-d} \leq n \leq C_d \delta^{-d}
\end{equation}
for dimensional constants $c_d$ and $C_d$. We have included a proof of
the above claim in Lemma \ref{points}. Throughout, we assume 
 $\delta \leq \kappa_d$ so that the above inequality holds. The
 following result (proved in Appendix \ref{minimax.proofs}) gives an
 upper bound on the minimax risk 
 \eqref{minratedeffix} over the 
 class $\F^{\lin}(\Omega)$ defined in \eqref{FlinClass}. 
\begin{proposition}\label{minratelin}
  For every $\Omega$ of the form \eqref{parallel} and satisfying
  \eqref{OmegaAss}, we have
  \begin{equation}
    \label{minratelin.eq}
       \minmax_n^{\mathrm{fixed}}(\F^{\lin}(\Omega)) \leq C_{\lin, \sigma} 
    n^{-4/(d+4)} (c_d \log n)^{4F/(d+4)}. 
  \end{equation}
\end{proposition}
The class $\F^{\lin}(\Omega)$ is larger than both $\C^{\lin}(\Omega)$
and $\C_{\lin}(\Omega)$ which means that the bound in
\eqref{minratelin.eq} also holds for $\minmax_n^{\mathrm{fixed}}(\F)$
for $\F = \C^{\lin}(\Omega)$ or $\C_{\lin}(\Omega)$ or
$\C^{\lin}_{\lin}(\Omega)$. To see why $\F^{\lin}(\Omega)$ is larger
than $\C^{\lin}(\Omega)$, just note that (by taking $g \equiv 0$)
\begin{equation*}
  \inf_{g \in   \A(\Omega)}\ell_{\P_n}(f_0, g)
\leq \sup_{x \in \Omega} |f_0(x)|. 
\end{equation*}
To see why $\F^{\lin}(\Omega)$ is also larger than
$\C_{\lin}(\Omega)$, note that (taking $g(x) := f_0(0)$ for the origin
$0$; the origin belongs to $\Omega$ because of \eqref{OmegaAss})  
\begin{align*}
  \inf_{g \in   \A(\Omega)}\ell^2_{\P_n}(f_0, g) &\leq \frac{1}{n}
  \sum_{i=1}^n \left(f_0(X_i) - f_0(0) \right)^2 \\ &\leq \left[\sup_{x
      \neq y} \frac{|f_0(x) - f_0(y)|}{\|x - y\|} \right]^2
                                                        \frac{1}{n}
                                                      \sum_{i=1}^n
                                                      \|X_i\|^2
   \leq \left[\sup_{x
      \neq y} \frac{|f_0(x) - f_0(y)|}{\|x - y\|} \right]^2
\end{align*}
where the last inequality is true because each $\|X_i\| \leq 1$ as
$\Omega \subseteq \balld$. 

The following minimax lower bound (proved in Appendix
\ref{minimax.proofs}) complements the upper bound in Proposition
\ref{minratelin} and it is applicable to the smaller function class
$\C_L^L(\Omega)$. $\Omega$ need not be polytopal for this result.     
\begin{proposition}\label{fixlobo}
  For every $\Omega$ satisfying \eqref{OmegaAss}, we have
  \begin{equation}
    \label{fixlobo.eq}
       \minmax_n^{\mathrm{fixed}}(\C^{L}_L(\Omega))  \geq c_{d,
         \sigma} n^{-4/(d+4)}. 
     \end{equation}
     provided $L \geq C_d$. 
\end{proposition}
Combining the above two results, in
fixed-design for $\Omega$ of the form \eqref{parallel} and satisfying
  \eqref{OmegaAss}, the minimax rate for each class
$\C_L^L(\Omega),  \C_L(\Omega), \C^L(\Omega), \F^{L}(\Omega)$ equals
$n^{-4/(d+4)}$ up to logarithmic multiplicative factors (this is
because $\C_L^L(\Omega)$ is the smallest of these classes for which
the lower bound \eqref{fixlobo.eq} holds and $\F^L(\Omega)$ is the
largest of these classes for which the upper bound
\eqref{minratelin.eq} holds).

\section{LSE Rates for Convex Regression}\label{LseRates}

\subsection{Random Design}\label{sec.mores}
In this section, we state our main result that the supremum risk appearing in the first three rows of
Table \ref{LSEexisting} is bounded from below by $n^{-2/d}$
(up to logarithmic  factors) for $d \geq 5$. As
$n^{-2/d}$ is strictly larger compared to the minimax rate of
$n^{-4/(d+4)}$ for $d \geq 5$, these results prove minimax
suboptimality, for $d \geq 5$ of the LSE $\hat{f}_n(\F)$ over the
class $\F$ for $\F$ equaling $\C_L^B(\Omega)$ or $\C_L(\Omega)$ for any
$\Omega$, and also for $\F$ equaling
$\C^B(\Omega)$ for polytopal $\Omega$. 

\begin{theorem}\label{implb.blse}
  Fix $d \geq 5$ and let $\Omega$ be a convex body satisfying
  \eqref{OmegaAss}. The following inequality holds when $\F$ is either
    $\C_L^B(\Omega)$ for $B \geq L$, or $\C_L(\Omega)$: 
    \begin{equation}\label{implb.clse.eq}
      \sup_{f \in \F} \E_{f} \ell^2_{\P}(\hat{f}_n(\F), f) \geq
      c_d \sigma L n^{-2/d} (\log n)^{-4(d+1)/d}
    \end{equation}
    provided $n \geq N_{d, \sigma/L}$. Additionally when $\Omega$ is a
    polytope whose number of facets is bounded by a constant depending
    on $d$ alone, the same inequality holds for $\F = \C^B(\Omega)$
    i.e.,
    \begin{equation}\label{implb.blse.eq}
      \sup_{f \in \C^B(\Omega)} \E_{f} \ell^2_{\P}(\hat{f}_n(\C^B(\Omega)), f) \geq
      c_d \sigma B n^{-2/d} (\log n)^{-4(d+1)/d}
    \end{equation}
    for $n \geq N_{d, \sigma/B}$. 
\end{theorem}

We prove the above theorem via $\sup_{f \in \F} \E_{f}
\ell^2_{\P}(\hat{f}_n(\F), f) \geq 
\E_{\tilde{f}} \ell^2_{\P}(\hat{f}_n(\F), \tilde{f})$ for a specific
piecewise affine convex function $\tilde{f}$ with 
$\sqrt{n}$ affine pieces. This function $\tilde{f}$ will be a
piecewise affine approximation to the quadratic $f_0(x) := \|x\|^2$. 
The properties of $\tilde f$ that we need
along with the existence of $\tilde f$ satisfying those 
properties are given in the next result. While this result is
stated for arbitrary $k$, we only need it for $k \sim \sqrt{n}$ for
proving Theorem \ref{implb.blse}. Recall that a $d$-simplex $\Delta$
in $\R^d$ is the convex hull of $d+1$ affinely independent points. It
is well-known that $d$-simplices can be represented as the
intersection of $d+1$ halfspaces. 

\begin{lemma}\label{ballapp}
  Suppose $\Omega$ is a convex body satisfying \eqref{OmegaAss}. Let
  $f_0(x) := \|x\|^2$. There exists a positive 
  constant $C_d$ (depending on $d$ alone) such that the
  following is true. For every $k \geq 1$, there exist $m \leq C_d k$ $d$-simplices
  $\Delta_1, \dots, \Delta_m \subseteq \Omega$ and a convex function
  $\tilde{f}_k$ on $\Omega$ such that   
  \begin{enumerate}
  \item $(1 - C_d k^{-1/d}) \Omega \subseteq \cup_{i=1}^m \Delta_i
    \subseteq \Omega$,
  \item $\Delta_i \cap \Delta_j$ is contained in a facet of $\Delta_i$
    and a facet of $\Delta_j$ for each $i \neq j$, 
  \item $\tilde{f}_k$ is affine on each $\Delta_i, i = 1, \dots, m$,
  \item $\sup_{x \in \Omega} |f_0(x) - \tilde{f}_k(x) | \leq C_d
    k^{-2/d}$,
  \item $\tilde{f}_k \in \C_{C_d}^{C_d}(\Omega)$,
  \end{enumerate}
If, in addition, $\Omega$ is a polytope whose number of facets is
bounded by a constant depending on $d$ alone, then the first condition
above can be strengthened to $\Omega = \cup_{i=1}^m \Delta_i$. 
\end{lemma}

\subsection{Results for the unrestricted convex LSE in fixed
  design} \label{fdresults} 
Here we work with the same setting as in Subsection \ref{minfd} (in particular, note that $\Omega$ is polytopal of the form \eqref{parallel}). The
following result shows that the unrestricted convex LSE
$\hat{f}_n(\C(\Omega))$ achieves the rate $r_{n, d}$ (defined in 
\eqref{balazs}) under the assumption $f_0 \in \F^{\lin}(\Omega)$ for a
positive constant $\lin$. The number $F$ appearing in the bound
\eqref{swc} below is the number of parallel halfspaces or slabs
defining $\Omega$ (see \eqref{parallel}) and this number is assumed to
be bounded by a constant depending on $d$ alone.   

\begin{theorem}\label{wcad}
For every $\lin > 0$ and $\sigma > 0$, there exist
constants $C_d$ (depending on $d$ alone) and $N_{d, \sigma/\lin}$
(depending only on $d$ and $\sigma/\lin$) such that
\begin{equation}\label{swc}
\sup_{f \in \F^{\lin}(\Omega)} \E_{f} \ell_{\empdes}^2(\hat f_n(\C(\Omega)), f) \leq
  \begin{cases}
   C_d \lin^{\frac{2d}{4 + d}} \left(\frac{\sigma^2 }{n} (\log n)^{F}
     \right)^{\frac{4}{d+4}}  \text{for } d  \le 3 \\
   C_4 \frac{\sigma \lin }{\sqrt{n}} (\log n)^{1 + \frac{F}{2}}   ~~~ \text{for } d = 4 \\
   C_d \sigma \lin \left(\frac{(\log n)^{F}}{n} \right)^{\frac{2}{d}} ~
 \text{for } d \geq 5
  \end{cases}
\end{equation}
for $n \geq N_{d, \sigma/\lin}$.
\end{theorem}
The rates appearing on the right hand side of \eqref{swc} coincide
with $r_{n, d}$ if we ignore multiplicative factors that are at most
logarithmic in $n$. Theorem 
\ref{wcad} and the minimax lower bound given in
Proposition \ref{fixlobo} together imply that the unrestricted convex LSE
is minimax optimal (up to log factors) over each class
$\F^{L}(\Omega), \C_{L}^{L}(\Omega), \C_L(\Omega), \C^{L}(\Omega)$
when the dimension $d \leq 4$. However for $d \geq 5$, there is a gap
between the rate given by \eqref{swc} and the minimax upper bound in
Proposition \ref{minratelin}. The following result shows that, for $d
\geq 5$, the unrestricted
convex LSE is indeed minimax suboptimal over $\C_L^L(\Omega)$ (or over
the larger classes $\C_L(\Omega)$, $\C^L(\Omega)$, $\F^L(\Omega)$). 

\begin{theorem}\label{minsub}
Fix $d \geq 5$, $L > 0$ and $\sigma >
  0$. There exist constants
  $c_d$ and $C_{d, \sigma/L}$ such that
  \begin{equation}\label{minsub.eq}
  \sup_{f \in \C^L_L(\Omega)} \E_{f} \ell_{\empdes}^2(\hat{f}_n(\C(\Omega)), f)
  \geq c_d \sigma L  n^{-\frac{2}{d}} (\log
  n)^{-\frac{4(d+1)}{d}} 
\end{equation}
for $n \geq C_{d, \sigma/L}$.  
\end{theorem}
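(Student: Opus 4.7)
The plan is to deduce Theorem \ref{minsub} by specializing the adaptive lower bound of Theorem \ref{implb} (announced in the introduction) at $k = \lceil \sqrt{n} \rceil$, combined with the construction Lemma \ref{app}. By Theorem \ref{implb}, for each $k$ with $1 \leq k \leq \sqrt{n}$ there exists a piecewise affine convex function $f_k$ on $\Omega$ having $\sim k$ affine pieces such that $\E_{f_k} \ell_{\P_n}^2(\hat f_n, f_k) \gtrsim (k/n)^{4/d}$ up to logarithmic factors. Setting $k = \lceil \sqrt{n} \rceil$ yields exactly the $n^{-2/d}$ rate. Lemma \ref{app} is then invoked to ensure that such an $f_k$ may be chosen in $\C_L^L(\Omega)$; the natural construction is to take $f_k$ to be a piecewise affine approximant of a smooth strictly convex function such as $x \mapsto L \|x\|^2/2$, suitably shifted and truncated so that both the uniform bound $L$ and the Lipschitz constant $L$ are respected on $\Omega$.

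The substantive content is inside the proof of Theorem \ref{implb}, which I would organize around a Chatterjee-type variational lower bound for LSEs over closed convex cones. Writing $\K_{f_0}(t) := \{(g(X_1),\dots,g(X_n)) : g \in \C(\Omega),\, \ell_{\P_n}(g,f_0) \leq t\} \subset \R^n$, the convex LSE is the $\ell_{\P_n}$-projection of $Y$ onto the convex vector cone in $\R^n$. A standard variational argument (in the spirit of \cite{CGS15}) yields $\E_{f_0} \ell_{\P_n}^2(\hat f_n, f_0) \gtrsim t_*^2$, where $t_* := \sup\{t > 0 : G(t) \geq t^2/2\}$ and $G(t) := \E \sup_{g \in \C(\Omega),\, \ell_{\P_n}(g,f_0) \leq t} \tfrac{1}{n}\sum_i \xi_i (g(X_i) - f_0(X_i))$. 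It therefore suffices to lower bound $G(t)$ at the scale $t \asymp (k/n)^{2/d}$, which by Sudakov minoration reduces to lower bounding the $\ell_{\P_n}$-metric entropy of the local ball $\B_t := \{g \in \C(\Omega) : \ell_{\P_n}(g,f_0) \leq t\}$ at an appropriate scale $\varepsilon \leq t$.

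The required entropy lower bound is produced by inserting independent convex ``bumps'' on each of the $k$ affine pieces $P_j$ of $f_0$. On each $P_j$ one constructs a rich family of bumps $\phi_j$ that (i) are supported in $P_j$, (ii) vanish on $\partial P_j$, and (iii) satisfy $f_0 + \phi_j \in \C(\Omega)$ globally; the last is the delicate point and uses the positive gradient gap between neighbouring affine pieces that the strict convexity of the underlying quadratic of Lemma \ref{app} supplies. A discrete analog of Bronshtein's metric-entropy estimate for convex functions controls the per-piece packing number, and because bumps on disjoint pieces are orthogonal under $\ell_{\P_n}$, these per-piece counts multiply across the $k$ pieces, furnishing the claimed entropy of $\B_t$. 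Combining this with Sudakov minoration and the Chatterjee fixed-point inequality, then optimizing over the admissible range of $\varepsilon$, yields the lower bound $t_*^2 \gtrsim (k/n)^{4/d}$ up to polylogarithmic factors, as desired.

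The main obstacle is the per-piece bump construction: each $\phi_j$ must be large enough to contribute non-trivially to the packing in $\ell_{\P_n}$ while having boundary slope bounded by the local slope gap of $f_0$ so that $f_0 + \phi_j$ remains convex across $\partial P_j$. Balancing these two competing constraints against the discrete geometry imposed by the rectangular grid $\grid \cap P_j$, which carries only $\sim n/k$ design points per piece, is the key technical step. The polylogarithmic factor $(\log n)^{-4(d+1)/d}$ in the final bound reflects both the admissible scale range in Sudakov's inequality and the slack inherent in the discrete Bronshtein-type entropy estimate.
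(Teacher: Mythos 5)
Your high-level reduction of Theorem~\ref{minsub} to Theorem~\ref{implb} matches the paper, though you should take $k = \sqrt n\,\sigma^{-d/4}$ (the largest value allowed by \eqref{implb.con}) rather than $\lceil\sqrt n\rceil$: the former yields the $\sigma L$ prefactor after the scaling step, whereas $k\asymp\sqrt n$ produces $\sigma^2$ and is not even admissible in \eqref{implb.con} when $\sigma>1$. The substantive gaps are in your sketch of the proof of Theorem~\ref{implb}.

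The per-piece bump construction is the main problem. On each cell $P_j$ the function $\tilde f_k$ is \emph{affine}, so convexity of $\tilde f_k + \phi_j$ on $P_j$ forces $\phi_j$ to be convex there; a convex function vanishing on $\partial P_j$ is nonpositive, and you have not shown that the class of such ``dips'' has the $\sim n/k$ per-piece log-packing your argument needs (Bronshtein's estimate is an \emph{upper} bound and furnishes no construction). The slope gaps between adjacent pieces constrain what happens at the boundaries $\partial P_j$, but they do not relax the convexity requirement in the \emph{interior} of a piece. The paper avoids this entirely: by the triangle inequality and \eqref{app.con}, $\ball_{\empdes}^{\C(\Omega)}(\tilde f_k, 2C_d k^{-2/d}) \supseteq \ball_{\empdes}^{\C(\Omega)}(f_0, C_d k^{-2/d})$ with $f_0(x)=\|x\|^2$, and Lemma~\ref{lbf0} packs the latter ball by placing one \emph{non-convex} cosine bump per grid point around the strictly convex $f_0$, whose Hessian $2I$ absorbs the bumps' negative curvature; no per-bump convexity is required. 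A secondary but genuine gap is the variational reduction: the inequality ``risk $\gtrsim t_*^2$'' with $t_*=\sup\{t:G(t)\ge t^2/2\}$ is not correct in general. Chatterjee's theorem characterizes the risk by $t_f=\argmax_t\big(G(t)-t^2/2\big)$, which can be much smaller than $t_*$ when $G$ flattens out. The paper therefore pairs the Sudakov \emph{lower} bound \eqref{allkt} with the Dudley/Corollary~\ref{polytopeGridm} \emph{upper} bound \eqref{upbcl}, and pins down $t_{\tilde f_k}$ via the chain $\gamma_d^2\sigma^2(k/n)^{4/d}/2\le \tilde G(t_{\tilde f_k}) \le \Upsilon_d\,\sigma\, t_{\tilde f_k}(k/n)^{2/d}(\log n)^{2(d+1)/d}$. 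Your outline supplies only the Sudakov lower bound on $G$, which does not suffice to localize the argmax.
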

The main idea for the proof of Theorem \ref{minsub} comes from
analyzing the rates of convergence of the unrestricted convex LSE when
the true convex function is piecewise affine. Indeed, Theorem
\ref{minsub}  is derived from Theorem \ref{implb} (stated later in
this section) which provides a lower bound on the risk of
$\hat{f}_n(\C(\Omega))$ for certain piecewise affine convex
functions. We state Theorem \ref{implb} after the next result which
provides upper bounds on the rate of convergence of
$\hat{f}_n(\C(\Omega))$ for piecewise affine convex functions. This
result shows that the unrestricted convex LSE 
exhibits adaptive behaviour to piecewise
affine convex functions $f$ in the sense that the risk $\E_{f}
\ell_{\P_n}^2(\hat{f}_n(\C(\Omega)), f)$ is 
much smaller than the right hand side of \eqref{swc} for such $f$.
For $k \geq 1$ and $h \geq 1$, let $\lincon_{k, h}(\Omega)$ denote all
functions $f \in \C(\Omega)$ for which there exist $k$ convex
subsets $\Omega_1, \dots, \Omega_k$ satisfying the following
properties: 
\begin{enumerate}
\item $f$ is affine on each $\Omega_i$,
\item each $\Omega_i$ can be written as an intersection of at most $h$ slabs (i.e., as in \eqref{parallel} with $F = h$),
  and  
\item $\Omega_1 \cap \grid, \dots, \Omega_k \cap \grid$ are disjoint
  with $\cup_{i=1}^k (\Omega_i \cap \grid) = \Omega \cap \grid$
  (recall that $\grid$ is the regular rectangular grid \eqref{frg}). 
\end{enumerate}

\begin{theorem}\label{adrate}
For every $k \geq 1$ and $h \geq 1$, we have
\begin{equation}\label{adrate.eq}
\sup_{f \in \lincon_{k, h}(\Omega)} \E_{f} \ell_{\empdes}^2(\hat f_n(\C(\Omega)), f) \leq
  \begin{cases}
   C_d \sigma^2 \left(\frac{k}{n} \right) (\log n)^{h} &
   \text{for } d = 1, 2, 3 \\
   C_d \sigma^2\left(\frac{k}{n}
     \right) (\log n)^{h + 2} & \text{for } d = 4 \\
   C_d \sigma^2 \left(\frac{k (\log n)^{h}}{n}
     \right)^{4/d} &
 \text{for } d \geq 5
  \end{cases}
\end{equation}
for a constant $C_d$ depending on $d$ alone.
\end{theorem}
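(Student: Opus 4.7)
The plan is to mimic the proof of Theorem \ref{wcad} but with a local metric entropy bound tailored to the piecewise affine structure of $f_0 \in \lincon_{k, h}(\Omega)$. The basic inequality for the LSE, combined with the localization reduction used for Theorem \ref{wcad}, bounds the expected risk $\E_{f_0} \ell_{\empdes}^2(\hat f_n, f_0)$, up to dimensional constants, by $t_*^2$, where $t_*$ is the smallest $t > 0$ satisfying
\begin{equation*}
  \E \sup_{g \in \ball(t)} \left\langle \xi,\, g - f_0 \right\rangle_n \; \lesssim \; t^2,
  \qquad \ball(t) := \left\{ g \in \C(\Omega) : \ell_{\empdes}(g, f_0) \leq t \right\},
\end{equation*}
and the Gaussian complexity on the left is controlled via chaining against the $\ell_{\empdes}$-metric entropy of $\ball(t)$.

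The key new ingredient is a decomposable entropy bound for $\ball(t)$ exploiting the $k$ affine pieces. Let $\Omega_1, \dots, \Omega_k$ be the pieces on which $f_0$ is affine, let $L_i$ denote the affine function agreeing with $f_0$ on $\Omega_i$, and set $n_i := |\Omega_i \cap \grid|$. For every $g \in \ball(t)$,
\begin{equation*}
  \sum_{i=1}^k \frac{n_i}{n} \, \ell^2_{\empdes,\Omega_i}\bigl(g|_{\Omega_i},\, L_i\bigr) \;=\; \ell_{\empdes}^2(g, f_0) \;\leq\; t^2,
\end{equation*}
so $\ball(t)$ embeds into the product over $i$ of local balls $\{h_i \in \C(\Omega_i) : \ell_{\empdes,\Omega_i}(h_i, L_i) \leq t_i\}$ centered at the \emph{affine} functions $L_i$, with radii satisfying $\sum_i (n_i/n) t_i^2 \leq t^2$. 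Each $\Omega_i$ is an intersection of at most $h$ slabs and each $L_i$ is affine, so Theorem \ref{polytope} applies on every piece and furnishes an entropy bound of shape $(t_i/\eps)^{d/2}$ up to $(\log n_i)^{h}$ factors. Summing over $i$ gives a global entropy bound for $\ball(t)$ that scales linearly in $k$ rather than in $n$, which is precisely the source of the adaptation.

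The remainder of the argument then follows the proof of Theorem \ref{wcad}. For $d \leq 3$, the $(t/\eps)^{d/2}$ entropy yields a convergent Dudley integral and the resulting fixed-point equation gives $t_*^2 \asymp \sigma^2 k (\log n)^{h} / n$. For $d = 4$, Dudley's integral is logarithmically divergent at zero and the standard borderline treatment produces an additional $(\log n)^2$ factor, matching \eqref{adrate.eq}. For $d \geq 5$, Dudley's integral diverges polynomially and one instead uses the Chatterjee-style fixed-point argument already exploited for Theorem \ref{wcad}, balancing the leading entropy term directly against $t^2$ to obtain $t_*^2 \asymp \sigma^2 \left( k (\log n)^{h} / n \right)^{4/d}$.

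The principal technical obstacle is the case $d \geq 5$, where Dudley's bound is inadequate and Theorem \ref{polytope} has to be invoked in a sharp enough form that, after optimizing the local radii $t_i$ subject to $\sum_i (n_i/n) t_i^2 \leq t^2$, the $(\log n_i)^{h}$ factors aggregate into a single $(\log n)^{h}$ rather than a $k$-dependent power of $\log n$. A secondary issue is the treatment of pieces $\Omega_i$ containing very few grid points, for which Theorem \ref{polytope} is weak; such pieces can be handled by pairing them with a trivial volumetric covering bound whose contribution is negligible compared to the leading term.
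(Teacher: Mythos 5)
Your overall strategy --- localize $\hat{f}_n$, bound the local Gaussian complexity by chaining against an $\ell_{\empdes}$-entropy estimate that exploits the $k$ affine pieces via Theorem \ref{polytope}, and close the fixed-point computation exactly as in the proof of Theorem \ref{wcad} --- is precisely what the paper does. The paper simply abstracts the piecewise aggregation into a separate result (Corollary \ref{admet}) and then the proof of Theorem \ref{adrate} is a one-line plug of that corollary into Theorem \ref{dud} and Theorem \ref{chat}, split by $d\leq 3$, $d=4$, $d\geq 5$.

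There is, however, a genuine gap in your aggregation step. You assert that $\ball(t)$ ``embeds into the product over $i$ of local balls $\ldots$ with radii satisfying $\sum_i(n_i/n)t_i^2\leq t^2$,'' and later propose ``optimizing the local radii $t_i$.'' But the admissible radius vector depends on the particular $g\in\ball(t)$: what is true is that $\ball(t)$ is contained in the \emph{union}, over all admissible $(t_1,\dots,t_k)$, of products of local balls, and that union is uncountable; covering it does not reduce to covering a single product and then ``summing over $i$.'' The paper's proof of Corollary \ref{admet} resolves this with a discretization: it attaches to each $f$ integer weights $\sigma_i(f)\geq 1$ (the smallest integer with $\sum_{s\in\Omega_i\cap\grid}|f(s)-f_0(s)|^p\leq n_i\sigma_i t^p$), shows $\sum_i n_i\sigma_i(f)\lesssim n$, covers each slice $\{f:\sigma_i(f)=\sigma_i\ \forall i\}$ piecewise by invoking Theorem \ref{polytope} on $\Omega_i$ with local radius proportional to $\sigma_i^{1/p}t$ and local resolution $\eps\sigma_i^{1/p}$ so that the $\sigma_i$'s cancel and the slice has log-cover $\lesssim k(t/\eps)^{d/2}[\log(1/\delta)]^h$, and then unions over the at most $n^k$ realizations of $(\sigma_1,\dots,\sigma_k)$, paying an additive $k\log n$ that is dominated. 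Some such binning step is mandatory; your sketch does not produce a cover of $\ball(t)$ without it.

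On the other hand, the two obstacles you flag are not real. The log factor in Theorem \ref{polytope} is $[c_{d,p}\log(1/\delta)]^F$ with $\delta$ the fixed global grid resolution, so it is literally the same factor on every piece and there is nothing to aggregate; and Theorem \ref{polytope} requires only $\Omega_i\cap\grid\neq\emptyset$, with no lower bound on $n_i$, so small pieces need no separate volumetric argument.
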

When $h$ is a constant (not depending on $n$) and $k$ is not too
large, the rates in
\eqref{adrate.eq} are of strictly smaller order compared to
\eqref{swc}. More precisely, ignoring log factors, \eqref{adrate.eq}
is strictly smaller than \eqref{swc} as long as $k$ is
of smaller order than $n^{d/(d+4)}$ for $d \leq 4$, and as long as $k$
is of smaller order than $\sqrt{n}$ for $d \geq 5$. The next result
gives a lower bound which proves that the upper bound in
\eqref{adrate.eq} cannot be improved (up to log factors) for $d \geq
5$ for all $k$ up to $\sqrt{n}$. More specifically, we prove the
$(k/n)^{4/d}$ lower bound for the piecewise affine convex function
$\tilde{f}_k$ described in Lemma \ref{ballapp}. 

\begin{theorem}\label{implb}
Fix $d \geq 5$. There exist positive constants $c_d$ and $N_d$ such
that for  $n \geq N_d$ and
\begin{equation}\label{implb.con}
  1 \leq k \leq \min \left(\sqrt{n} \sigma^{-d/4}, c_d n \right),
\end{equation}
we have
\begin{equation}\label{implb.eq}
    \E_{\tilde{f}_k} \ell_{\empdes}^2(\hat{f}_n(\C(\Omega)),
    \tilde{f}_k)\geq c_d \sigma^2 \left(\frac{k}{n} 
   \right)^{4/d} (\log n)^{-4(d+1)/d}
 \end{equation}
 where $\tilde{f}_k$ is the function from Lemma \ref{ballapp}.
\end{theorem}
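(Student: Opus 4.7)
The plan is as follows.

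First, by the standard fixed--point characterization of the risk of least squares estimators over convex cones (cf.\ the Chatterjee (2014) representation, or the parallel argument used in the proof of Theorem \ref{minsub}), I reduce the problem to lower bounding the local Gaussian width
\begin{equation*}
w(t) := \E \sup_{f \in \C(\Omega),\, \|f - \tilde f_k\|_2 \leq t} \langle \xi, f - \tilde f_k \rangle,
\end{equation*}
where $\|\cdot\|_2$ is the Euclidean norm on $\R^n$. If $t^*$ denotes the fixed point of $w(t^*) \sim (t^*)^2$, then $\E \|\hat f_n - \tilde f_k\|_2^2 \gtrsim (t^*)^2$, so it suffices to exhibit $t$ with $t^2/n \gtrsim \sigma^2 (k/n)^{4/d}$ and $w(t) \gtrsim t^2$.

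Second, I apply Sudakov's minoration: $w(t) \gtrsim \sigma \epsilon \sqrt{\log M(\epsilon)}$ for any $\epsilon > 0$, where $M(\epsilon)$ is the $\epsilon$-packing number (in Euclidean distance) of $\{f \in \C(\Omega) : \|f - \tilde f_k\|_2 \leq t\}$.

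Third, I construct a rich packing. Using Lemma \ref{app}, decompose $\Omega = \cup_{i=1}^m \Delta_i$ with $m \leq C_d k$ simplices of diameter $\rho \sim k^{-1/d}$ on which $\tilde f_k$ is affine and approximates $\|x\|^2$ within $C_d \rho^2$. Since the quadratic has uniform Hessian $2 I$, the gradient of $\tilde f_k$ jumps by $\sim \rho$ across each interior facet of the partition. This \emph{convex kink budget} allows me to subtract a concave bump $\rho^2 \psi_i$ on each simplex $\Delta_i$ (with $\|\psi_i\|_\infty \lesssim 1$ and boundary gradient $\lesssim 1/\rho$) while keeping $\tilde f_k - \rho^2 \sum_i \alpha_i \psi_i$ globally convex for every $\alpha \in \{0, 1\}^m$. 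Enumerating these $2^m$ binary patterns and applying the Gilbert--Varshamov bound yields a subfamily of $\exp(\Omega(k))$ convex functions at $\ell_{\P_n}$-distance $\sim k^{-2/d}$ from $\tilde f_k$ and pairwise separated at the same scale. Substituting into Sudakov and solving the resulting fixed--point inequality for $t^*$ then gives the target rate, with the $(\log n)^{-4(d+1)/d}$ correction absorbing log factors from (i) the conversion between the fixed--point characterization and $\E \ell_{\P_n}^2$, (ii) the discretization onto $\grid$, and (iii) the constants in Lemma \ref{app}.

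The principal obstacle is the third step. Any ``downward'' perturbation confined to a single affine piece of $\tilde f_k$ contributes an extra outward--pointing gradient at the piece's boundary of order $\rho^2 \cdot (1/\rho) = \rho$, which would ordinarily violate convexity at $\partial \Delta_i$. The resolution is that $\tilde f_k$ itself already carries a convex kink of exactly the same order $\rho$ at each facet (an immediate consequence of the uniform curvature of $\|x\|^2$), and this preexisting kink precisely absorbs the perturbation's outward derivative. Making this compatibility quantitative across all $\sim k$ facets simultaneously, verifying that the resulting $\exp(\Omega(k))$ family is both close to $\tilde f_k$ in $\ell_{\P_n}$ and genuinely pairwise separated, and carrying the Gaussian--width estimate through the fixed--point equation all the way to \eqref{implb.eq} constitutes the technical heart of the proof.
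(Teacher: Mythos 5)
Your proposal correctly identifies the Chatterjee fixed-point reduction and Sudakov minoration as the skeleton of the lower bound, and those are indeed the tools the paper uses. The fatal gap is in Step 3: the packing you construct is at the wrong scale and is exponentially too small. You perturb $\tilde f_k$ directly on the $m \sim k$ simplices of its affine decomposition, which yields a family of at most $\exp(ck)$ convex functions at mutual $\ell_{\P_n}$-distance $\sim k^{-2/d}$. Plugging this into Sudakov gives
\[
\tilde G(t)\;\gtrsim\;\frac{\sigma}{\sqrt{n}}\,k^{-2/d}\,\sqrt{k}\;=\;\sigma\,k^{-2/d}\sqrt{k/n}\qquad\text{at }t\sim k^{-2/d},
\]
and then, by concavity, $\tilde G(t)\gtrsim \sigma\,t\,\sqrt{k/n}$ for $t\lesssim k^{-2/d}$. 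Feeding this into the fixed-point argument together with the paper's upper bound $\tilde G(t)\lesssim\sigma t(\log n)^{2(d+1)/d}(k/n)^{2/d}$ yields $t_{\tilde f_k}\gtrsim\sigma(k/n)^{1-2/d}(\log n)^{-2(d+1)/d}$, hence a risk lower bound of order $\sigma^2(k/n)^{2-4/d}$. For $d\geq 5$ and $k<n$ this is strictly \emph{smaller} than the target $\sigma^2(k/n)^{4/d}$ (since $2-4/d>4/d$ iff $d>4$), so your argument proves something weaker than Theorem \ref{implb}, and in particular cannot deliver the worst-case suboptimality of Theorem \ref{minsub}.

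The underlying reason is structural: $\tilde f_k$ is piecewise affine, so it carries no usable curvature inside the simplices; the only ``kink budget'' lives on the $O(k)$ facets, which caps any direct perturbation family at $\exp(O(k))$ elements. The paper circumvents this by never packing around $\tilde f_k$ directly. Instead, Lemma \ref{lbf0} builds a packing around the \emph{smooth} function $f_0(x)=\|x\|^2$ at the \emph{grid scale} $\delta\sim n^{-1/d}$: one small compactly supported bump of height $\delta^2$ per grid cell, exploiting the uniform Hessian $2I$ of $f_0$ to preserve convexity under arbitrary $\{0,1\}^n$ activations. Gilbert--Varshamov then gives $\exp(n/8)$ functions at separation $\sim n^{-2/d}$. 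Because $\ell_{\P_n}(\tilde f_k,f_0)\leq C_d k^{-2/d}$ (Lemma \ref{app}) and $n^{-2/d}\ll k^{-2/d}$ under \eqref{implb.con}, this entire packing sits inside $\ball^{\C(\Omega)}_{\P_n}(\tilde f_k, 2C_d k^{-2/d})$, so Sudakov applied to \emph{this} packing gives $\tilde G(t)\gtrsim\sigma n^{-2/d}$ at $t\sim k^{-2/d}$, hence $\tilde G(t)\gtrsim\sigma t(k/n)^{2/d}$, which is exactly what the fixed-point calculation needs. The missing idea in your proposal is therefore the detour through $f_0$: the relevant local ball around $\tilde f_k$ is rich not because $\tilde f_k$ itself can be perturbed much, but because it is close to the quadratic, whose curvature supports $\exp(cn)$-many independent microscale perturbations.
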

The lower bound given by \eqref{implb.eq} for $k = \sqrt{n}
\sigma^{-d/4}$ is of the same order as that given by Theorem
\ref{minsub}. In other words, Theorem \ref{minsub} is a corollary of
Theorem \ref{implb}.  

We reiterate that the results of this subsection (fixed-design risk
bounds for the unrestricted convex LSE) hold when $\Omega$ is 
a polytope. A natural question is to extend these to the case where
$\Omega$ is a smooth convex body such as the unit ball. Based on the
results of Kur et al. \cite{kur2019optimality} who analyzed the LSE over
$\C^B(\Omega)$ under random-design, it is reasonable to conjecture
that the unrestricted convex LSE will be minimax optimal in fixed
design when the domain is the unit ball. However it appears nontrivial
to prove this as the main ideas from \cite{kur2019optimality} such as
the reduction to expected suprema bounds over level sets cannot be
used in the absence of uniform boundedness.

\section{Proof Sketches for Results in Section
  \ref{LseRates}}\label{pfskeran}
In this section, we provide the main ideas and ingredients behind the
proofs of the LSE rate results in Section \ref{LseRates}. Further
details and full proofs of the auxiliary results in this section can
be found in Appendix \ref{rd.proofs}.

\subsection{General LSE Accuracy result}\label{genchasec}

The first step to prove all our LSE results is the following theorem
due to Chatterjee \cite{Chat14} which provides a course of action for
bounding (from 
above as well as below) the accuracy of abstract LSEs on convex classes of
functions. The original result applies to the fixed design case for
every set of design points $X_1, \dots, X_n$. In the random design
case, we apply this result by conditioning on $X_1, \dots, X_n$ (see
\eqref{charan} and \eqref{tfran} below). 

\begin{theorem}[Chatterjee]\label{chat}
  Consider data generated according to the model:
  \begin{equation*}
    Y_i = f(X_i) + \xi_i \qt{for $i = 1, \dots, n$}
  \end{equation*}
  where $X_1, \dots, X_n$ are fixed deterministic design points in a
  convex body $\samp \subseteq \R^d$, $f$ belongs to a convex class of
  functions $\F$ and $\xi_1, \dots, \xi_n \overset{\text{i.i.d}}{\sim}
  N(0, \sigma^2)$. Consider the LSE $\hat{f}_n(\F)$ defined in
  \eqref{genlse}. Define
  \begin{equation*}
t_f(\F) := \argmax_{t \geq 0} H_f(t, \F)    
  \end{equation*}
where
    \begin{equation*}
      H_f(t, \F) := \E \sup_{g \in \F : \ell_{\P_n}(f, g) \leq t}
      \frac{1}{n} \sum_{i=1}^n \xi_i   \left(g(X_i) - f(X_i) \right) -
      \frac{t^2}{2}.  
    \end{equation*}
 Then
$H_f(\cdot, \F)$ is a concave function on $[0, \infty)$, $t_f(\F)$ is
unique and the following pair of inequalities hold for positive constants $c$
and $C$:
    \begin{equation}\label{problb}
      \P \left\{0.5 t^2_f(\F) \leq \ell_{\empdes}^2(\hat{f}_n(\F), f)
        \leq 2 t^2_f(\F)  \right\} \geq 1 - 6 \exp
      \left(-\frac{cnt^2_f(\F)}{\sigma^2} \right) 
    \end{equation}
   and
    \begin{equation}\label{expecb}
    0.5 t^2_f(\F) - \frac{C \sigma^2}{n} \leq   \E \ell_{\empdes}^2(\hat{f}_n(\F), f)
    \leq 2 t^2_f(\F) + \frac{C \sigma^2}{n}.
  \end{equation}
  Upper bounds for $t_f(\F)$ can be obtained via:
  \begin{equation}\label{tfub}
    t_f(\F) \leq \inf \left\{t > 0 : H_f(t, \F) \leq 0 \right\}
  \end{equation}
  and lower bounds for $t_f(\F)$ can be obtained via:
  \begin{equation}\label{tflb}
    t_f(\F) \geq t_1 \qt{if $0 \leq t_1 < t_0$ are such that $H_f(t_1,
      \F) \le
      H_f(t_0, \F)$}.
  \end{equation}
\end{theorem}
Intuitively, Theorem \ref{chat} states that the loss
$\ell_{\P_n}(\hat{f}_n(\F), f)$ of the LSE is controlled by $t_f(\F)$
for which bounds can be obtained using \eqref{tfub} and \eqref{tflb}.

Theorem \ref{chat} holds for the
fixed design setting with no restriction on the design
points which means that it also applies to the random
design setting provided we condition on the design points
$X_1, \dots, X_n$. In particular, for random design, inequality
\eqref{problb} becomes: 
\begin{equation}\label{charan}
  \begin{split}
  &\P \left\{0.5 t^2_f(\F) \leq \ell_{\P_n}^2(\hat{f}_n(\F), f) \leq 2
    t^2_f(\F) \bigg| X_1, 
    \dots, X_n \right\} \\ &\geq 1 - 6 \exp \left(- \frac{cn
    t^2_f(\F)}{\sigma^2} \right)
    \end{split}
\end{equation}
where
\begin{equation}
  \label{tfran}
t_f(\F) := \argmax_{t \geq 0} H_f(t, \F)
\end{equation}
with
\begin{equation}\label{hfdef}
  H_f(t, \F) := \E \left[ \sup_{g \in \F: \ell_{\P_n}(f, g) \leq t} \frac{1}{n}
  \sum_{i=1}^n \xi_i \left(g(X_i) - f(X_i) \right) \bigg| X_1, \dots,
  X_n \right] - \frac{t^2}{2}.
\end{equation}
Here $t_f(\F)$ is random as it depends on the random design points
$X_1, \dots, X_n$. Note that \eqref{charan} applies to the loss
$\ell_{\P_n}$ and not to $\ell_{\P}$.

\subsection{High-level intuition for the proof of Theorem
  \ref{implb.blse}}\label{intuit}
Here we provide the main ideas behind the proof of Theorem
\ref{implb.blse} in a somewhat non-rigorous fashion. More details are  
provided in the remainder of this section. According to Theorem
\ref{chat}, the rate of convergence of the LSE over $\F$ (when the true
function is $f$) is controlled by $t_f^2(\F)$ where $t_f(\F)$ is the
maximizer of $H_f(t, \F)$ defined in \eqref{hfdef}. Note that $H_f(t,
\F)$ is given by an expected supremum term minus $t^2/2$. The expected
supremum term can be seen as a measure of the complexity of the local
region $\{g \in \F : \ell_{\P_n}(f, g) \leq t\}$ around the true
function $f$. 

For the proof of Theorem \ref{implb.blse}, we shall take the true
function to be $\tilde{f}_k$ (given by Lemma \ref{ballapp}) with $k
\sim \sqrt{n}$. The challenge is to show that the maximizer of
$H_{\tilde{f}_k}(t, \F)$ is at least $n^{-1/d} (\log
n)^{-2(d+1)/d}$ for each function class $\F$ in the statement of
Theorem \ref{implb.blse}. For this, the first step is to prove that 
\begin{align}\label{intuitive_1}
   H_{\tilde{f}_k}(t, \F) \leq C_d t n^{-1/d} (\log n)^{2(d+1)/d}. 
\end{align}
for a large enough range of $t$ (see Lemma \ref{GrUB} and the writing after it). This inequality, which grows linearly in $t$, can be seen as a bound
on the complexity of the local region in $\F$ around
$\tilde{f}_k$, and is proved by bounding the bracketing entropy
numbers of local regions around $\tilde{f}_k$ (Theorem
\ref{Thm:Bddcube}). 

The second step is to prove the following lower bound on
$H_{\tilde{f}_k}(t, \F)$:
\begin{equation}\label{intuitive_2}
 \sup_{t}  H_{\tilde{f}_k}(t, \F) \geq c_d n^{-2/d}
\end{equation}
This is proved by lower bounding the metric entropy of a local region
around $f_0(x) := \|x\|^2$ (Lemma \ref{lbf0.ran})  and the fact that
$\tilde{f}_k$ and $f_0$ are within $C_d k^{-2/d} = n^{-1/d}$ of each
other (this is guaranteed by Lemma \ref{ballapp}). 

Combining \eqref{intuitive_1} and \eqref{intuitive_2}, it is
straightforward to see that
\begin{align*}
   H_{\tilde{f}_k}(t, \F) \leq C_d t n^{-1/d} (\log n)^{2(d+1)/d} <
 c_d n^{-2/d} \leq  \sup_{t}  H_{\tilde{f}_k}(t, \F) 
\end{align*}
for
\begin{align*}
  t < (c_d/C_d) n^{-1/d} (\log n)^{-2(d+1)/d}, 
\end{align*}
so that the maximizer of $H_{\tilde{f}_k}(t, \F)$ is $\geq
(c_d/C_d) n^{-1/d} (\log n)^{-2(d+1)/d}$.    

It might be interesting to note that this proof will not work if we
take the true function to be the smooth function $f_0(x) := \|x\|^2$ instead
of the piecewise affine approximation $\tilde{f}_k$. For $f_0$, the
upper bound \eqref{intuitive_1} is no longer true; in fact $H_{f_0}(t,
\F)$ will be as large as $c_d n^{-2/d}$ for $t$ of the order
$n^{-2/d}$ which clearly violates the upper bound
\eqref{intuitive_1}. Intuitively, this happens because local
regions around $f_0$ have more complexity compared to local regions
around $\tilde{f}_k$. This is because the piecewise affine convex
function $\tilde{f}_k$ has flat Hessians so it is not easy to find
many perturbations which keep the resulting function convex. But this
is much easier to do with the smooth function $f_0$.  We are unable to determine the rate of convergence of the LSEs when
the true function is $f_0$. It can be anywhere between $n^{-4/d}$ and
$n^{-2/d}$, a range which includes the minimax rate $n^{-4/(d+4)}$.

\subsection{Proof Sketch for Theorem \ref{implb.blse}}\label{pfskeblse}
It is enough to prove \eqref{implb.clse.eq} and \eqref{implb.blse.eq}
when $L$ and $B$ are fixed constants depending on the dimension $d$ alone, respectively. From
here, these inequalities for arbitrary $L$ and $B$ can be deduced by
elementary scaling arguments.  Let $f_0(x) :=
\|x\|^2$ and let $\tilde{f}_k$ be as given by Lemma \ref{ballapp} for a
fixed $1 \leq k \leq n$. Below 
we assume that $L$ is a large enough constant depending on $d$ alone so
that $\tilde{f}_{k} \in \C^L_L(\Omega)$ and also that $B \geq L$.  We
assume that the true function is $\tilde{f}_k$ and show that, when $k
\sim \sqrt{n}$, the risk of $\hat{f}_n(\F)$ at $\tilde{f}_k$ is
bounded from below by $c_d \sigma n^{-2/d} (\log n)^{-4(d+1)/d}$ for
each of the choices of the function class $\F$ in the statement of
Theorem \ref{implb.blse}.  

Our strategy is to bound 
$t_{\tilde{f}_k}(\F)$ from below and then use Theorem \ref{chat} to
obtain the LSE risk lower bound. Recall, from Theorem \ref{chat}, that
$t_{\tilde{f}_k}(\F)$ maximizes
 \begin{equation}\label{Hkdef}
 H_{\tilde{f}_k}(t, \F) :=  G_{\tilde{f}_k}(t, \F) - \frac{t^2}{2}
 \end{equation}
 over all $t \geq 0$ where
 \begin{equation}\label{gfkt}
   G_{\tilde{f}_k}(t, \F) := \E \left[ \sup_{g \in \F :
       \ell_{\P_n}(g, \tilde{f}_k) \leq t} \frac{1}{n}
  \sum_{i=1}^n \xi_i \left(g(X_i) - \tilde{f}_k(X_i) \right) \bigg| X_1, \dots,
  X_n \right].
\end{equation}
The main ingredients necessary to bound $t_{\tilde{f}_k}(\F)$ are the
next pair of results, which provide upper and lower 
bounds for $G_{\tilde{f}_k}(t, \F)$ respectively.
\begin{lemma}\label{GrUB}
  Fix $d \geq 5$ and let $\Omega$ be a convex body satisfying
  \eqref{OmegaAss}. There exists $C_d > 0$ such that the following
  holds 
    \begin{equation}\label{GrUB.eq1}
    \begin{split}
    & \P \left\{G_{\tilde{f}_k}(t, \F) > C_d t \sigma
      \left(\frac{k}{n} \right)^{2/d} (\log  (C_d \sigma
      \sqrt{n}))^{2(d+1)/d} + \frac{C_d}{\sqrt{n}} +  \frac{C_d \sigma
      k^{2/d^2}}{k^{1/d}n^{2/d}} \right\} \\ &\leq  C_d
                                                                 \exp
                                                                 \left(-\frac{n^{d/(d+4)}}{C_d} 
                                                                 \right)
                                               + C_d \exp 
      \left(-\frac{n^{(d-4)/d}}{C_d^2} t^2 k^{4/d} \right) + \exp
                                       \left(-\frac{C_dn}{k^{2/d}} \right)   
    \end{split}
  \end{equation}
  for every fixed $t$ satisfying $C_d n^{-2/(d+4)} \leq t \leq
  L$, and for $\F$ equal to either of the two classes $\C_L^L(\Omega)$
  and $\C_L(\Omega)$.

  Additionally, when $\Omega$ is a polytope whose number of facets is
  bounded by a constant depending on $d$ alone, we have the following
  inequality for $\F = \C^B(\Omega)$: 
  \begin{equation}\label{GrUB.eq2}
    \begin{split}
    & \P \left\{G_{\tilde{f}_k}(t, \C^B(\Omega)) > C_d t \sigma
      \left(\frac{k}{n} \right)^{2/d} (\log (C_d \sigma
      \sqrt{n}))^{2(d+1)/d} + \frac{C_d}{\sqrt{n}} \right\} \\ &\leq 
                                                                 C_d
                                                                 \exp
                                                                 \left(-\frac{n^{d/(d+4)}}{C_d} 
                                                                 \right)
                                                                 + C_d \exp
      \left(-\frac{n^{(d-4)/d}}{C_d^2} t^2 k^{4/d} \right) 
    \end{split}
  \end{equation}
  for every fixed $t$ satisfying $t \geq C_d
  n^{-2/(d+4)}$. 
\end{lemma}

\begin{lemma}\label{GrLB}
  Fix $d \geq 1$ and let $\Omega$ be a convex body satisfying
  \eqref{OmegaAss}. There exists $c_d, C_d > 0$ such that the
  following holds for every fixed  
  $1 \leq k \leq n$ and $t \geq C_d k^{-2/d}$: 
  \begin{equation*}
    \P \left\{G_{\tilde{f}_k}(t, \F) \geq c_d \sigma n^{-2/d}
  \right\} \geq 1 - \exp \left(-c_d n  \right)
\end{equation*}
for $\F$ equal to $\C^B_L(\Omega)$, and also for the larger classes
$\C_L(\Omega)$ and $\C^B(\Omega)$.   
\end{lemma}

Before providing details behind the proofs of Lemma \ref{GrUB} and
Lemma \ref{GrLB}, let us outline how they lead to the proof of Theorem
\ref{implb.blse} (full details are in Appendices
\ref{tfloboran} and \ref{comple.implb.blse}). The leading
term in the upper bound on  $G_{\tilde{f}_k}(t, \F)$ given by Lemma
\ref{GrUB} is
\begin{equation}\label{ublea}
  C_d t \sigma \left(\frac{k}{n} \right)^{2/d} (\log (C_d \sigma
  \sqrt{n}))^{2(d+1)/d}.  
\end{equation}
This upper bound also applies to $H_{\tilde{f}_k}(t, \F)$ because
$H_{\tilde{f}_k}(t, \F) \leq
G_{\tilde{f}_k}(t, \F)$ as is clear from the definition
\eqref{Hkdef}. This bound was previously stated as \eqref{intuitive_1}
in Subsection \ref{intuit}.  

On the other hand, the lower bound on $G_{\tilde{f}_k}(t, \F)$ from
Lemma  \ref{GrLB} leads to the following lower bound for
$H_{\tilde{f}_k}(t, \F)$:
\begin{equation*}
  H_{\tilde{f}_k}(t, \F) \geq c_d \sigma n^{-2/d} - \frac{t^2}{2}. 
\end{equation*}
The special choice $t_0 := \sqrt{c_d} n^{-1/d}
\sqrt{\sigma}$ in the above bound leads to
\begin{equation}\label{hlo}
    H_{\tilde{f}_k}(t_0, \F) \geq \frac{c_d}{2} \sigma n^{-2/d}. 
\end{equation}
This lower bound is the basis for the inequality \eqref{intuitive_2}
in the intuition subsection \ref{intuit}. The requirement $t_0 =
\sqrt{c_d} n^{-1/d} \sqrt{\sigma} \geq C_d 
k^{-2/d}$ in Lemma \ref{GrLB} would hold if $k$ satisfies $k \geq
\gamma_d \sqrt{n} \sigma^{-d/4}$ for some $\gamma_d$.

We now compare the upper bound \eqref{ublea} with the lower bound
\eqref{hlo}. It can be seen that \eqref{ublea} is less than or equal
to the right hand side of  \eqref{hlo} when $t$ is of the order $k^{-2/d}
(\log (C_d \sigma \sqrt{n}))^{-2(d+1)/d}$. Let us denote by $t_1$ this
particular choice of $t$ for $k = \gamma_d \sqrt{n} \sigma^{-d/4}$. We
will argue, in Lemma \ref{tfktildelb} below, that $t_1 < t_0$ and that 
\begin{equation*}
  H_{\tilde{f}_k}(t_1, \F) \leq H_{\tilde{f}_k}(t_0, \F). 
\end{equation*}
This allows application of inequality \eqref{tflb} to yield the
following lower bound on $t_{\tilde{f}_k}(\F)$ (this lower bound gives
the necessary risk lower bounds for $\hat{f}_n(\F)$ via Theorem 
\ref{chat}). 
\begin{lemma}\label{tfktildelb}
  There exist constants $\gamma_d, c_d, C_d, N_{d, \sigma}$ such that
  \begin{equation}\label{fbtf}
  \P \left\{t_{\tilde{f}_k}(\F) \geq c_d n^{-1/d} \sqrt{\sigma} (\log
    n)^{-2(d+1)/d}  \right\} \geq 1 - C_d \exp
  \left(\frac{-n^{(d-4)/d}}{C_d^2} \right)
\end{equation}
for $k = \gamma_d \sqrt{n} \sigma^{-d/4}$ and $n \geq N_{d,
  \sigma}$. Here $\F$ can be taken to be any of the three choices
in Theorem \ref{implb.blse}.  
\end{lemma}

\subsubsection{Proof ideas for Lemma \ref{GrUB} and Lemma
  \ref{GrLB}} \label{hightech} 

We now explain the main proof ideas behind Lemma \ref{GrUB}
and Lemma \ref{GrLB}. For Lemma \ref{GrUB}, we use available bounds on
suprema of empirical processes via bracketing numbers. The key
bracketing result that is needed for Lemma \ref{GrUB} is given
below. It provides an upper bound on the bracketing entropy of bounded
convex functions with an additional $L_p$ norm constraint. The metric
employed is the $L_p$ metric on a bunch of simplices. This result is
stated for arbitrary $p \in [1, \infty)$  although we only use it for
$p = 2$. Lemma \ref{GrUB} is proved by applying this theorem to
the simplices given in Lemma \ref{ballapp}. 

\begin{theorem}\label{Thm:Bddcube}
	Suppose $\Omega$ is a convex body contained in the unit
        ball. Let $\tilde{f}$ be a convex function on $\Omega$ that is
        bounded by $\Gamma$. For a fixed $1 \leq p < \infty$  and $t >
        0$, let 
        \begin{equation}\label{bpgam}
	B_p^\Gamma(\tilde{f}, t, \Omega)=\left\{f \in \C^{\Gamma}(\Omega):
          \int_{\Omega}|f(x)-\tilde{f}(x)|^pdx\le t^p \right\}.
      \end{equation}
      Suppose $\Delta_1, \dots, \Delta_k \subseteq \Omega$ are
      $d$-simplices with disjoint interiors such that $\tilde{f}$ is affine
      on each $\Delta_i$. Then for every $0 < \epsilon < \Gamma$ and $t > 0$,
      we have
       \begin{equation}\label{Thm:Bddcube.eq}
	\log N_{[\,]}(\eps, B_p^\Gamma(\tilde{f}, t, \Omega), \|\cdot\|_{p,
          \cup_{i=1}^k \Delta_i})\le C_{d,p}
        k\left(\log \frac{\Gamma}{\epsilon} \right)^{d+1}
        \left(\frac{t}{\epsilon} \right)^{d/2}
      \end{equation}
      for a constant $C_{d, p}$ that depends on $p$ and $d$ alone. The
      left hand side above denotes bracketing entropy with respect to
      $L_p$ metric on $\Delta_1 \cup \dots \cup
      \Delta_k$.
    \end{theorem}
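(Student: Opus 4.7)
My plan is to reduce the bracketing-entropy statement to a single-simplex problem and then pass from the discrete covering guarantee of Corollary~\ref{polytopeGridm} to a continuous $L_p$ bracketing.

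\emph{Reduction to a single simplex with $f_0\equiv 0$.} The bracketing norm $\|\cdot\|_{p,\cup_i\Delta_i}^p=\sum_{i=1}^k\|\cdot\|_{p,\Delta_i}^p$ decomposes additively across the (disjoint-interior) simplices, so an $\eps$-bracket on $\cup_i\Delta_i$ can be assembled from $\eps k^{-1/p}$-brackets on each $\Delta_i$ independently, and their cardinalities multiply. Taking logarithms reduces the target bound to the single-simplex statement
\begin{equation*}
\log N_{[\,]}(\eps,B_p^\Gamma(f_0;t;\Delta),\|\cdot\|_{p,\Delta})\leq C_{d,p}(\log(\Gamma/\eps))^{d+1}(t/\eps)^{d/2}
\end{equation*}
for a single $d$-simplex $\Delta\subseteq\Omega$ on which $f_0$ is affine. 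Writing $g:=f-f_0|_\Delta$ then further reduces the problem to bracketing $G_\Delta:=\{g:\Delta\to\R\text{ convex},\ |g|\leq 2\Gamma,\ \|g\|_{p,\Delta}\leq t\}$ in $L_p(\Delta)$, since adding back $f_0$ converts a bracket for $g$ into one for $f$.

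\emph{From discrete covering to continuous bracketing.} Introduce a grid $\grid$ of mesh $\delta\asymp\eps/\Gamma$ inside $\Delta$. Applying Corollary~\ref{polytopeGridm} with $k=1$ and $f_0\equiv 0$ yields a covering $\{\tilde g_j\}$ of $G_\Delta$ (through its restriction to $\grid$) in the $\ell_\grid(\cdot,\Delta,p)$ quasi-metric at radius $\eps$, of cardinality at most $\exp(C_{d,p}(\log(1/\delta))^{d+1}(t/\eps)^{d/2})\leq\exp(C_{d,p}(\log(\Gamma/\eps))^{d+1}(t/\eps)^{d/2})$, matching the desired exponent. For each $\tilde g_j$ I build a bracket $[\ell_j,u_j]$ on $\Delta$ by piecewise-constant extension on the Voronoi cells of $\grid$: on the cell centered at $s\in\grid$, set $u_j:=\tilde g_j(s)+\omega_s$ and $\ell_j:=\tilde g_j(s)-\omega_s$, where the margin $\omega_s$ is large enough to absorb both the discrete covering radius $\eps$ and the fluctuation of any convex $g$ within the cell. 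Standard modulus-of-continuity estimates for bounded convex functions allow $\omega_s\asymp\eps$ on cells well inside $\Delta$, so the interior contribution to $\|u_j-\ell_j\|_{p,\Delta}^p$ is $O(\eps^p)$.

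\emph{Main obstacle: boundary control.} The principal difficulty is that bounded convex functions need not be uniformly continuous up to $\partial\Delta$, so for cells adjacent to the boundary the margin $\omega_s$ may have to be as large as $2\Gamma$. I would resolve this by peeling off a boundary shell $\Delta\setminus\Delta^\circ$, with $\Delta^\circ:=\{x\in\Delta:\mathrm{dist}(x,\partial\Delta)\geq\rho\}$ and $\rho$ tuned so that $(2\Gamma)^p|\Delta\setminus\Delta^\circ|\lesssim\eps^p$, i.e.\ $\rho\asymp(\eps/\Gamma)^p$. On $\Delta^\circ$ the interior argument of the previous paragraph applies uniformly, while on the shell the crude bracket $[-\Gamma,\Gamma]$ already contributes only $O(\eps^p)$ to the $L_p$ gap. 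The delicate point is to verify that a \emph{single} peeling layer suffices so that no further logarithmic factor is introduced beyond the $(\log(\Gamma/\eps))^{d+1}$ already provided by Corollary~\ref{polytopeGridm}; this relies on the $L_p$ constraint $\|g\|_{p,\Delta}\leq t$, which rules out large-scale boundary oscillations of $g$ and couples with the volume estimate on the shell. Summing the interior and boundary contributions yields $\|u_j-\ell_j\|_{p,\Delta}\lesssim\eps$, which combined with the simplex-reduction of Step~1 gives~\eqref{Thm:Bddcube.eq}.
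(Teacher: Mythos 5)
Your plan breaks down already at the first step, the reduction to a single simplex. If you demand $\eps k^{-1/p}$-brackets on each $\Delta_i$ so that the concatenated bracket on $\cup_i\Delta_i$ has $L_p$-width $\eps$, and the single-simplex entropy at scale $\eps'$ behaves like $(\log(\Gamma/\eps'))^{d+1}(t/\eps')^{d/2}$, then summing over $i=1,\ldots,k$ gives a $\log$-cardinality of order
\begin{equation*}
k\,\bigl(\log(\Gamma k^{1/p}/\eps)\bigr)^{d+1}\bigl(t k^{1/p}/\eps\bigr)^{d/2}
= k^{1+d/(2p)}\bigl(\log(\Gamma k^{1/p}/\eps)\bigr)^{d+1}(t/\eps)^{d/2},
\end{equation*}
i.e.\ a superfluous factor $k^{d/(2p)}$ (and an extra $\log k$ in the logarithm). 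This is exactly the looseness the paper attributes to \citet[Lemma~3.3]{HW16} and explicitly says cannot be tolerated because the application needs $k\sim\sqrt n$. The real content of the proof is the device you skip: slice the class by the distribution of $L_p$-mass over simplices. To each $f$ one attaches integers $t_i=t_i(f)$ with $\int_{\Delta_i}|f-f_0|^p\le t_i^pt^p|\Delta_i|$; the $L_p$ constraint forces $\sum_i t_i^p|\Delta_i|\lesssim 1$, the number of admissible tuples $(t_1,\ldots,t_m)$ is at most $(2\Gamma/t)^m$, and within each slice one chooses simplex-dependent bracket widths $\eps_i\asymp t_i|\Delta_i|^{1/p}\eps$ so that $\sum_i\eps_i^p\lesssim\eps^p$ while the per-simplex entropies sum to $O\bigl(k(\log(\Gamma/\eps))^{d+1}(t/\eps)^{d/2}\bigr)$. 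Without this slicing no uniform choice of $\eps'$ can reach the linear-in-$k$ bound.

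Two further problems occur in your discrete-to-continuous step. First, Corollary~\ref{polytopeGridm} is a \emph{covering} bound in the averaged $\ell_{\grid}$ quasi-metric; it does not deliver pointwise control $|f(s)-\tilde g_j(s)|\lesssim\eps$ at every grid point, which is what you would need to turn the Voronoi-cell construction with margin $\omega_s\asymp\eps$ into a genuine bracket. Second, a single boundary shell of thickness $\rho\asymp(\eps/\Gamma)^p$ does not tame the modulus of continuity: a convex function bounded by $\Gamma$ has local Lipschitz constant of order $\Gamma/\rho$ at distance $\rho$ from $\partial\Delta$, so over a cell of mesh $\delta\asymp\eps/\Gamma$ adjacent to the shell it can vary by $\Gamma\delta/\rho\gg\eps$ unless $\rho\gtrsim\delta\Gamma/\eps\asymp 1$, which contradicts $\rho\ll1$. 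The paper avoids both difficulties by constructing brackets directly (never passing through a discrete cover) and by controlling the boundary with the $d+1$-step induction of Lemma~\ref{simplex}, which peels off $O(\log(\Gamma/\eps))$ geometrically-thinning slabs in each facet direction; this geometric peeling is precisely what produces the $(\log(\Gamma/\eps))^{d+1}$ factor and cannot be collapsed to a single layer.
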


The above bracketing entropy result (whose proof is given in Appendix \ref{Gao:subsection}) is nontrivial and novel. The
function class considered in the above theorem has both an
$L_{\infty}$ 
constraint (uniform boundedness) as well as an $L_p$
constraint. If the $L_p$  constraint is dropped, then the bracketing
entropy is of the order $(\Gamma/\epsilon)^{d/2}$ as proved by
Gao and Wellner \cite{gao2017entropy} (see also
Doss \cite{doss2020bracketing}). In 
contrast to $(\Gamma/\epsilon)^{d/2}$, \eqref{Thm:Bddcube.eq} only has
a logarithmic dependence on $\Gamma$ and is much smaller when $t$ is
small. Also, Theorem \ref{Thm:Bddcube} is comparable to but stronger
than \cite[Lemma 3.3]{HW16} which gives a weaker bound for the left
hand side of \eqref{Thm:Bddcube.eq} having additional multiplicative
factors involving $k$ (these factors cannot be neglected since we care
about the regime $k \sim \sqrt{n}$).

For Lemma \ref{GrLB}, the key is the following result which proves a
lower bound on the metric entropy of balls of convex functions around
the quadratic function $f_0(x) := \|x\|^2$ (recall, from Lemma
\ref{ballapp}, that $\tilde{f}_k$ are chosen to be piecewise affine
approximations of $f_0(x) = \|x\|^2$). 

\begin{lemma}\label{lbf0.ran}
 Let $\Omega$ be a convex body satisfying \eqref{OmegaAss}. Let
 $f_0(x) := \|x\|^2$. Then there 
 exist positive constants $c_1, c_2, c_3, c_4, C$ depending on
 $d$ alone such that
 \begin{equation*}
   \P \left\{\log N(\epsilon, \{f \in \C_L^L(\Omega): \ell_{\P_n}(f,
     f_0) \leq t \}, \metemp)
     \geq c_1 \epsilon^{-d/2} \right\} \geq 1 - \exp(-c_2 n)
 \end{equation*}
 for each fixed $\epsilon, t, L$ satisfying $L \geq C$ and $c_3
 n^{-2/d} \leq \epsilon  \leq \min(c_4, t/4)$. The probability above
 is with respect to the randomness in the design points $X_1, \dots,
 X_n \overset{\text{i.i.d}}{\sim} \P$. 
\end{lemma}
The main consequence of the above lemma is that when $\epsilon \sim
n^{-2/d}$ and $t \gtrsim n^{-2/d}$, the metric entropy of $\{f \in
\C_L^L(\Omega): \ell_{\P_n}(f, f_0) \leq t \}$ is of order $n$. This
also holds for $t \sim k^{-2/d}$ for $k \leq n$ as $k^{-2/d} \geq
n^{-2/d}$. Because the distance 
between $\tilde{f}_k$ and $f_0$ is bounded by $k^{-2/d}$, the same
order-$n$ lower bound also holds for the metric entropy of $\{f \in
\C_L^L(\Omega): \ell_{\P_n}(f, \tilde{f}_k) \leq t \}$. Sudakov
minoration can then be used to prove Lemma \ref{GrLB}. See Appendix \ref{rd.proofs} for full details.

\subsection{Proof sketches for fixed-design results (Subsection
  \ref{fdresults})}\label{pfskefdr}
Here, we provide sketches for the proofs of the fixed-design results
stated in Subsection \ref{fdresults}. Further details and full proofs
can be found in Appendix \ref{fd.proofs}. 

The starting point for these proofs is Theorem \ref{chat}. For 
the risk upper bound results (Theorem \ref{wcad} and Theorem
\ref{adrate}), we need to bound the quantity $H_{f_0}(t, \C(\Omega))$
(appearing in the statement of Theorem \ref{chat}) from above. Let
\begin{equation}\label{gtdef}
  G_{f_0}(t, \C(\Omega)) := \E \sup_{g
        \in \C(\Omega): \ell_{\empdes}(f_0, g) \leq t} \frac{1}{n} \sum_{i=1}^n \xi_i
      \left(g(X_i) - f_0(X_i) \right)
\end{equation}
so that $H_{f_0}(t, \C(\Omega)) = G_{f_0}(t, \C(\Omega)) - t^2/2$ and
upper bounds on $G_{f_0}(t, \C(\Omega))$ imply upper bounds on
$H_{f_0}(t, \C(\Omega))$. In contrast to the random design setting of
the previous subsection, we do not need to explicitly indicate
conditioning on $X_1, \dots, X_n$ in this fixed design setting.  

Theorem \ref{wcad} is a consequence of the following upper bound on
$G_{f_0}(t, \C(\Omega))$: 
\begin{lemma}\label{GtUBwcFD}
  Fix $f_0 \in \C(\Omega)$ and suppose $\inf_{g \in \A(\Omega)}
  \ell_{\P_n}(f_0, g) \leq \lin$. There exists $C_d$ such that for
  every $t > 0$, 
  \begin{equation}\label{GtUBwcFD.eq}
G_{f_0}(t, \C(\Omega)) \leq
  \begin{cases}
  \frac{C_d \sigma}{\sqrt{n}} (\log n)^{F/2} \left(t + \lin^{d/4} t^{1-d/4} \right) ~~\text{for } d  \le 3 \\
\frac{C_d \sigma}{\sqrt{n}} (t + \lin) (\log n)^{1 + (F/2)}  ~~~~~~~~~ \text{for } d = 4 \\
C_d \sigma \left(\frac{(\log n)^{F/2}}{\sqrt{n}} \right)^{4/d} (t +
    \lin) ~~~~~~ 
 \text{for } d \geq 5.
  \end{cases}
\end{equation}
\end{lemma}

Lemma \ref{GtUBwcFD} is a consequence of the following bound which
holds for functions $f_0
\in \C(\Omega)$ satisfying $\inf_{f \in \A(\Omega)}
\ell_{\P_n}(f_0, f) \leq \lin$: 
\begin{equation}\label{simmet}
 \log N(\epsilon, \left\{f \in
  \C(\Omega) : \ell_{\P_n}(f_0, f) \leq t \right\}, \ell_{\empdes}) \leq
 C_d (\log n)^F  \left(\frac{t + \lin}{\epsilon} \right)^{d/2}
\end{equation}
for every $t > 0$ and $\epsilon > 0$. This metric entropy bound, which is
novel and nontrivial, will be derived based on a more fundamental 
metric entropy result that is stated later in this section. Dudley's
entropy bound \cite{Dudley67} will be applied in
conjunction with \eqref{simmet} to 
derive Lemma \ref{GtUBwcFD}. Dudley's
bound involves an integral of the square root of the metric entropy
which will lead to an integral of $\epsilon^{-d/4}$. Since this 
integral converges for $d \leq 3$, and blows up at zero for $d = 4$ (logarithmically) and for $d
\geq 5$ (exponentially), the upper bound for $G_{f_0}(t, \C(\Omega))$ behaves
differently for the three regimes $d \leq 3$, $d = 4$ and $d \geq 5$.  

Theorem \ref{adrate} is a consequence of the following upper bound on $G_{f_0}(t, \C(\Omega))$:
\begin{lemma}\label{GtUBadFD}
  Fix $f_0 \in \lincon_{k, h}(\Omega)$. There exists $c_d$ such that
  for every $t > 0$,
  \begin{equation}\label{GtUBadFD.eq}
G_{f_0}(t, \C(\Omega)) \leq
  \begin{cases}
 t \sigma \sqrt{\frac{k}{n}} (c_d\log n)^{h/2} ~~~~~~~\text{for } d  \le 3 \\
t \sigma \sqrt{\frac{k}{n}} (c_d\log n)^{1 + (h/2)}   ~~~~~~~~~ \text{for } d = 4 \\
 t \sigma  \left((c_d\log n)^{h/2} \sqrt{\frac{k}{n}} \right)^{4/d}
    ~~~~~~  
 \text{for } d \geq 5. 
  \end{cases}
\end{equation}  
\end{lemma}

Lemma \ref{GtUBadFD} is a consequence of the following bound which
holds for functions $f_0
\in \mathfrak{C}_{k, h}(\Omega)$:
\begin{equation}\label{simmet1}
 \log N(\epsilon, \left\{f \in
  \C(\Omega) : \ell_{\P_n}(f_0, f) \leq t \right\}, \ell_{\empdes}) \leq
k \left(\frac{t}{\epsilon} \right)^{d/2} \left(c_d \log n \right)^h.
\end{equation}
The bound \eqref{simmet1}, which also seems novel and nontrivial, is
an improvement over \eqref{simmet} provided $t \lesssim L$ and $k$ is
not too large. Both the entropy bounds \eqref{simmet} and
\eqref{simmet1} will be derived based on a more general bound that is
stated later in this section.

We now move to the proof sketches for the lower bound results, Theorem
\ref{minsub} and Theorem \ref{implb}, which apply to the case $d \geq
5$. Because Theorem \ref{minsub} is a special case of Theorem \ref{implb}
corresponding to $k 
= \sqrt{n} \sigma^{-d/4}$ (see Appendix \ref{minsubproof}), we focus
on the proof  sketch of Theorem \ref{implb}. This proof is similar to
that of Theorem \ref{implb.blse} but simpler; for example, we can
simply work here with the single loss  $\ell_{\P_n}$ (and not worry
about its discrepancy with any continuous loss $\ell_{\P}$ as in the
proof of Theorem \ref{implb.blse}).  

As in the proof of Theorem
\ref{implb.blse}, we need to prove upper and lower bounds for
$G_{\tilde{f}_k}(t, \C(\Omega))$. The upper bound follows from Lemma
\ref{GtUBadFD}  because, as guaranteed by Lemma
\ref{ballapp}, the function $\tilde{f}_k$ belongs to $\mathfrak{C}_{m,
  d+1}(\Omega)$ for some $m \leq C_d k$ (we are using 
$h = d+1$ here because every $d$-simplex can be written as the
intersection of atmost $d+1$ slabs). This gives the following bound
which coincides with the leading term in random design bound given by
\eqref{GrUB.eq1}:
\begin{equation}\label{upbcl}
  G_{\tilde{f}_k}(t, \C(\Omega)) \leq C_d t \sigma (\log
  n)^{2(d+1)/d} \left(\frac{k}{n} \right)^{2/d} \qt{for every $t > 0$}
\end{equation}
The lower bound on $G_{\tilde{f}_k}(t, \C(\Omega))$ is given below.
\begin{lemma}\label{allkt.lem}
  There exist a positive constant $c_d$ such that
    \begin{equation}\label{allkt}
    G_{\tilde{f}_k}(t, \C(\Omega)) \geq c_d \sigma t \left(\frac{k}{n} \right)^{2/d} \qt{for
      all $0 < t \leq c_d k^{-2/d}$}
  \end{equation}
  provided $k \leq c_d n$. 
\end{lemma}
The lower bound \eqref{allkt} and the upper bound \eqref{upbcl}
differ only by the logarithmic factor $(\log n)^{2(d+1)/d}$. Further
\eqref{allkt} coincides with corresponding lower bound in Lemma \ref{GrLB}
for the random design setting when $k = \sqrt{n} \sigma^{-d/4}$ and
$t$ is of order $k^{-2/d}$. Lemma \ref{allkt.lem} is proved via the
following metric entropy lower bound which applies to the discrete
metric $\ell_{\P_n}$ and is analogous to Lemma \ref{lbf0.ran}.
  \begin{lemma}\label{lbf0}
    Let $\Omega$ be a convex body satisying \eqref{OmegaAss}. Let $f_0(x) :=
    \|x\|^2$. There exist two positive constants $c_1$ and $c_2$
    depending on $d$ alone such that
    \begin{equation*}
      \log N(c_1 n^{-2/d}, \{g \in \C(\Omega): \ell_{\P_n}(f_0, g)
      \leq t\},
      \ell_{\empdes}) \geq \frac{n}{8} \qt{for $t \geq c_2
        n^{-2/d}$}.
    \end{equation*}
\end{lemma}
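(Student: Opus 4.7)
The plan is to exploit the uniform strong convexity of $f_0(x) := \|x\|^{2}$ (whose Hessian is the constant matrix $2I$) to construct exponentially many convex perturbations of $f_0$ that all lie close to $f_0$ in $\ell_{\empdes}$ yet remain well-separated from one another. The key geometric fact is that $f_0$ lies above its tangent plane at any point $y$ by exactly $\|x-y\|^{2}$, so the tangent plane may be raised by any amount $\leq r^{2}$ without destroying global convexity, and the resulting perturbation differs from $f_0$ only inside $B(y,r)$.

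Choose $\epsilon := c_0 n^{-2/d}$ with $c_0$ a sufficiently small dimensional constant, specifically small enough that $\sqrt{\epsilon} < \delta$ (possible by \eqref{nlob}). Enumerate the $n$ grid points of $\Omega\cap\grid$ as $y_1,\ldots,y_n$, and let $L_j(x) := \|y_j\|^{2} + 2 y_j^{T}(x-y_j)$ be the tangent plane of $f_0$ at $y_j$. Since $f_0(x) - L_j(x) = \|x-y_j\|^{2}$, the set $\{x : L_j(x)+\epsilon > f_0(x)\}$ is precisely the open ball $B_j := B(y_j,\sqrt{\epsilon})$, and because $\sqrt{\epsilon}<\delta$ no grid point other than $y_j$ itself lies in $B_j$. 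For each $\sigma \in \{0,1\}^{n}$, define
$$f_\sigma(x) := \max\Bigl(f_0(x),\; \max_{j : \sigma_j = 1}\bigl[L_j(x)+\epsilon\bigr]\Bigr),$$
which is convex as a maximum of convex functions, agrees with $f_0$ off $\bigcup_{j : \sigma_j = 1} B_j$, and equals $L_j+\epsilon$ on each active $B_j$.

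Evaluation at grid points gives $f_\sigma(y_j) = f_0(y_j) + \epsilon \sigma_j$ (because $L_j(y_j)=f_0(y_j)$ and, for $j'\neq j$, $L_{j'}(y_j)+\epsilon<f_0(y_j)$ since $y_j\notin B_{j'}$), while $f_\sigma$ agrees with $f_0$ at every other grid point. Hence
$$\ell_{\empdes}^{2}(f_\sigma, f_{\sigma'}) \;=\; \frac{\epsilon^{2}\,H(\sigma,\sigma')}{n},$$
where $H$ denotes Hamming distance. Taking $\sigma'\equiv 0$ yields $\ell_{\empdes}(f_\sigma, f_0)\leq\epsilon = c_0 n^{-2/d}$, so every $f_\sigma$ belongs to $\ball^{\C(\Omega)}_{\empdes}(f_0,t)$ whenever $t\geq c_2 n^{-2/d}$ with $c_2:=c_0$.

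For the packing lower bound, the Gilbert--Varshamov bound furnishes $M$ codewords $\sigma^{(1)},\ldots,\sigma^{(M)} \in \{0,1\}^{n}$ with pairwise Hamming distance at least $n/4$ and $\log M \geq (1 - H_2(1/4))\,n\log 2 > n/8$ (using $1-H_2(1/4)\approx 0.189$ and $0.189\log 2 > 1/8$). The associated functions then satisfy $\ell_{\empdes}(f_{\sigma^{(i)}}, f_{\sigma^{(j)}}) \geq \epsilon/2 = c_1 n^{-2/d}$ with $c_1:=c_0/2$, delivering the required packing of $\ball^{\C(\Omega)}_{\empdes}(f_0,t)$. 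The main technical care is the choice of $c_0$: it must be small enough that $\sqrt{\epsilon}<\delta$, so that each $B_j$ meets $\grid$ only at its own center and the Hamming-distance identity is exact, while still yielding the final packing radius $c_1 n^{-2/d}$ stated in the lemma.
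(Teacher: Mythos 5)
Your proof is correct, and it reaches the same conclusion by a genuinely different perturbation mechanism. The paper's proof constructs the packing additively: it takes a fixed smooth bump $g(x) = \sum_i \cos^3(\pi x_i)$ (supported on a cube, with Hessian bounded in operator norm), rescales it to a $\delta$-cube around each grid point $s$ to form $g_s$, and sets $G_\xi = f_0 + \tfrac{3}{4\sqrt 2 \pi^2}\sum_s \xi_s g_s$; convexity of $G_\xi$ is checked by comparing the bump Hessian against the constant Hessian $2I$ of $f_0$, and the cubes' disjoint supports give the exact Hamming-distance formula for $\ell_{\P_n}(G_\xi, G_{\xi'})$ just as in your construction. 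Your version replaces the additive bump with a pointwise maximum of $f_0$ and raised tangent planes $L_j + \eps$, which makes convexity automatic (no Hessian computation needed) and localizes the perturbation to $B(y_j,\sqrt\eps)$ for free, relying on the exact identity $f_0 - L_j = \|\cdot - y_j\|^2$. Both routes crucially use the strong convexity of $f_0$; your approach is arguably cleaner in that it requires no explicit bump choice and yields the grid-point values exactly. Two cosmetic remarks: (i) as in the paper, passing from an $\eps/2$-separated packing of size $e^{n/8}$ to a lower bound on $N(\cdot)$ costs a factor of $2$ in the radius, so $c_1$ should be $c_0/4$ rather than $c_0/2$; this is absorbed into the dimensional constant and is harmless. (ii) The phrase ``the tangent plane may be raised by any amount $\le r^2$ without destroying global convexity'' is slightly misleading --- convexity of the max is automatic for any amount of raising; the bound $\le r^2$ is needed only to confine the support of the change to $B(y_j,r)$, which you in fact use correctly.
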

Theorem \ref{implb} is proved by combining \eqref{upbcl} and
\eqref{allkt} via Theorem \ref{chat}.

The metric entropy upper bounds \eqref{simmet} and \eqref{simmet1}
play a very important role in these proofs. Both these bounds will be
derived as a consequence of the following entropy result. This result 
involves the resolution $\delta$ of the grid $\grid$ (defined in
\eqref{frg}) which, by \eqref{nlob}, is of order 
$n^{-1/d}$. Even though we only need the entropy bound for $p = 2$, we
state the next result for the discrete $L_p$ metric $(f, g) \mapsto
\ell_{\grid}(f - g, \Omega, p)$ for  every $1 \leq p < \infty$ where   
\begin{equation}\label{ellSdef}
  \ell_{\grid}(f,\Omega, p)=\left(\frac{1}{n}\sum_{s\in \Omega\cap
      \grid}|f(s)|^p\right)^{1/p}.  
\end{equation}
The $\epsilon$-covering number of a space $\F$ of functions
on $\Omega$ under the metric $(f, g) \mapsto \ell_{\grid}(f - g,
\Omega, p)$ will be denoted by $N(\epsilon, \F, \ell_{\grid}(\cdot,
\Omega, p))$.

\begin{theorem}\label{polytope}
  Suppose $\Omega$ is of the form \eqref{parallel} and satisfies
  \eqref{OmegaAss}. There exists $c_{d,p}$ depending only on $d$ and
  $p$ such that for every $\epsilon > 0$ and $t > 0$,  
  \begin{equation}\label{polytope.eq}
\log N(\epsilon, \{f \in \C(\Omega) : \ell_{\grid}(f, \Omega,
        p) \leq t\},
        \ell_{\grid}(\cdot,\Omega, p)) \le
        [c_{d,p}\log(1/\delta)]^{F}\left(\frac{t}{\epsilon}
      \right)^{d/2}.
        \end{equation}
      \end{theorem}
Theorem \ref{polytope} is the first entropy result for convex
functions that deals with the discrete metric $\ell_{\grid}(\cdot,
\Omega, p)$ (all previous results
hold for continuous $L_p$ metrics). Its proof is similar to but longer
than the proof of the related result Theorem \ref{Thm:Bddcube}. The
two bounds \eqref{simmet} and \eqref{simmet1} follow easily from
Theorem \ref{polytope} as proved in Appendix \ref{fdsimproofs}.  

\section{Discussion}\label{discu}
We conclude by addressing a few natural questions and extensions that
arise from our main results.

\subsection{Optimality vs Suboptimality}\label{ballcase} 
As already indicated, the statement ``the LSE in convex regression is
rate optimal'' can be both true and false in random-design depending
on details and context. In this paper, we proved that, if the domain
is polytopal and the dimension $d \geq 5$, this statement is false
meaning that the LSE is minimax suboptimal. On the other hand, if the domain
is the unit ball, then the statement is true for all $d \geq 2$ meaning that
the LSE is minimax optimal (this optimality has been proved in
\cite{kur2019optimality, han2019global}).

Here we briefly outline the two key differences (relevant to LSE
suboptimality/optimality) in the structures of $\C^B(\Omega)$
corresponding to the two cases where $\Omega$ is a polytope and
$\Omega$ is the unit ball. We assume $d \ge 2$ below. 
\begin{enumerate}
\item The main difference is that the $L_2(\P)$ metric entropy of
  $\C^B(\Omega)$ is $\epsilon^{-d/2}$ in the polytopal case and
  $\epsilon^{-(d-1)}$ in the ball case (these entropy results are due
  to \cite{gao2017entropy}). The larger metric entropy
  directly leads to the slower minimax rate $n^{-2/(d+1)}$ in the ball
  case compared to $n^{-4/(d+4)}$  in the polytopal case. In the
  polytopal case, constructions for the proof of the metric entropy
  lower bound are based on perturbations of a smooth convex
  function. This construction also leads to the same $\epsilon^{-d/2}$
  lower bound in the ball case. However the improved lower bound of
  $\epsilon^{-(d-1)}$ in the 
  ball case is obtained by taking indicator-like convex functions which are zero
  in the interior of the ball but rapidly rise to 1 near a subset of
  the boundary. This construction is
  feasible because of the high 
  complexity of the boundary of the ball, as well as because of the
  nature of the $L_2(\P)$ metric. If the metric is changed to
  $L_1(\P)$, the metric entropy remains $\epsilon^{-d/2}$ for both the
  ball and the polytopal case. It may be helpful to note here that if
  $f = I_A$ is the indicator function of a set $A$, then
  $\|f\|_{L_2(\P)} = \sqrt{\P(A)}$ which can be much larger than
  $\|f\|_{L_1(\P)} = \P(A)$.
\item Now let us discuss the behavior of the LSE. In the polytopal
  case, when $d \geq 5$, the supremum risk of the LSE over
  $\C^B(\Omega)$ is $n^{-2/d}$ (upto log factors). This exceeds the
  minimax rate $n^{-4/(d+4)}$ implying that the LSE is minimax
  suboptimal. On the other hand, note that $n^{-2/d}$ is actually
  smaller than the minimax rate of $n^{-2/(d+1)}$ in the ball
  case. This might suggest that the LSE might be minimax optimal in
  the ball case. Kur et al. \cite{kur2019optimality} proved that the LSE achieves
  the rate $n^{-2/(d+1)}$ in the ball case implying optimality. At a
  very high level, their argument is as follows. From Theorem
  \ref{chat}, it is clear that the risk of the LSE can be upper bounded
  via upper bounds on the expected supremum
  \begin{equation*}
    \E \sup_{f \in \C^B(\Omega): \ell_{\P_n}(f, f_0)  \leq t}
    \frac{1}{n} \sum_{i=1}^n \xi_i \left(f(X_i) - f_0(X_i) \right)
  \end{equation*}
  where $f_0 \in \C^B(\Omega)$ is the true
  function. Kur et al. \cite{kur2019optimality} upper bound the above by first
  removing the localization constraint $\ell_{\P_n}(f, f_0) \leq t$
  leading to
  \begin{equation*}
    \E \sup_{f \in \C^B(\Omega)}
    \frac{1}{n} \sum_{i=1}^n \xi_i \left(f(X_i) - f_0(X_i) \right)
    \leq \E \sup_{f \in \C^{2B}(\Omega)}
    \frac{1}{n} \sum_{i=1}^n \xi_if(X_i). 
  \end{equation*}
  The right hand side above is the Gaussian complexity of the class
  $\C^{2B}(\Omega)$. Kur et al. \cite{kur2019optimality} then show that the
  Gaussian complexity is upper bounded by the discrepancy between
  $\P_n$   and $\P$ over all compact convex subsets of $\Omega$:
  \begin{equation}\label{setdisc}
    \E \sup_{C \subseteq \Omega: C \text{ is compact, convex}}
    |\P_n(C) -\P(C)|. 
  \end{equation}
  They then use chaining with respect to the $L_1$ norm to prove that
  \eqref{setdisc} is bounded by $n^{-2/(d+1)}$. This argument reveals
  that, like the minimax rate which is driven 
  by indicator-like functions over convex sets, the LSE accuracy is
  also driven by the discrepancy over convex
  sets. Han \cite{han2019global} gives examples of other problems where
  LSE accuracy is also driven by discrepancy over sets and the LSE
  turns out to be optimal in these problems as well. 
\end{enumerate}

\subsection{Additional remarks on the fixed-design results}\label{fdnp} 
In the fixed-design setting (where the design
points are given by grid points \eqref{frg} intersected with $\Omega$
and the loss function is \eqref{ldef}), we proved that when $\Omega$
is a polytope of the form \eqref{parallel}, the minimax rate over each
class $\C_L^B(\Omega)$, $\C_L(\Omega)$, $\C^B(\Omega)$, $\F^L(\Omega)$
equals $n^{-4/(d+4)}$ (up to logarithmic factors). We also proved
(Theorem \ref{minsub}) that the supremum risk of the unrestricted LSE
$\hat{f}_n(\C(\Omega))$ over each of these classes is at least
$\Tilde{\Omega}(n^{-2/d})$ for $d \geq 5$. The proof of Theorem
\ref{minsub} can be easily 
modified to prove that the restricted LSE over $\F$ also has
supremum risk of at least $\Tilde{\Omega}(n^{-2/d})$ and hence is minimax
suboptimal over $\F$,  where $\F$ is any of the four classes
$\C_L^B(\Omega)$, $\C_L(\Omega)$, $\C^B(\Omega)$, $\F^L(\Omega)$. 

Now, let us briefly comment on the case when $\Omega$ is
not a polytope. Under the Lipschitz constraint, the story is the
same as in the case of random design. Specifically, the minimax rate
for $\C_L^B(\Omega)$ and $\C_L(\Omega)$ equals $n^{-4/(d+4)}$ for
every $\Omega$ satisfying \eqref{OmegaAss} regardless of whether
$\Omega$ is polytopal or not (this is proved by essentially the same
argument as in the proof of Proposition
\ref{MinimaxLips}; the main idea is that the metric entropy of
$\C_L^B(\Omega)$ is not influenced by the boundary of
$\Omega$). Furthermore, the restricted LSE over   
each of these classes achieves supremum risk of at least $\Tilde{\Omega}(n^{-2/d})$. The proof of these results can be obtained by
suitably modifying the proof of Theorem \ref{minsub} following ideas
in the proof of Theorem \ref{implb.blse}.

Without the Lipschitz constraint, things are more complicated. In
Section \ref{intro}, we discussed that when $\Omega$ is the unit 
ball for $d \geq 2$ and we are in the random-design setting, the minimax rate over
$\C^B(\Omega)$ equals $n^{-2/(d+1)}$ as proved by Han and Wellner
\cite{HW16}. Additionally, we 
noted the minimax \textit{optimality} of the LSE over
$\C^B(\Omega)$ as proved by Kur et al. \cite{kur2019optimality}. The
analogues of these results for the fixed design setting are  unknown.

We summarize this discussion on the fixed-design setting in Table
\ref{fixeddesignsummary}.

\begin{table}[h]
\centering
\small
\begin{tabular}{|p{1.15cm}|p{1.5cm}|p{2cm}|p{3cm}|p{3cm}|}
\hline
Class $\mathcal{F}$ & Assumption on $\Omega$ & Minimax Rate (up to logs) & $\underset{f
                                                              \in \F}{\sup}
                                                              \E_f
                                                              \ell_{\P_n}^2(\hat{f}_n(\F),
                                                                           f
                                                                           )$
                                                                           (upto logs)
  & Minimax Optimality of $\hat{f}_n(\F)$ (upto logs) \\
  \hline
  $\F^L(\Omega)$ & polytope & $n^{-4/(d+4)}$ & 
$n^{-4/(d+4)}$, $d \leq 4$ 
\newline
$n^{-2/d}$, $d \geq 5$
  &
    Optimal for $d \leq 4$ 
\newline
Suboptimal for $d \geq 5$
\\
\hline  
$\mathcal{C}_L^B(\Omega)$ & any convex body & $n^{-4/(d+4)}$ & 
$n^{-4/(d+4)}$, $d \leq 4$ 
\newline
$n^{-2/d}$, $d \geq 5$
& 
Optimal for $d \leq 4$ 
\newline
Suboptimal for $d \geq 5$
\\
  \hline
$\mathcal{C}_L(\Omega)$ & any convex body & $n^{-4/(d+4)}$ & 
$n^{-4/(d+4)}$, $d \leq 4$ 
\newline
$n^{-2/d}$, $d \geq 5$
& 
Optimal for $d \leq 4$ 
\newline
Suboptimal for $d \geq 5$
\\
 \hline
$\mathcal{C}^B(\Omega)$ & polytope & $n^{-4/(d+4)}$ & 
$n^{-4/(d+4)}$, $d \leq 4$ 
\newline
$n^{-2/d}$, $d \geq 5$
& 
Optimal for $d \leq 4$ 
\newline
Suboptimal for $d \geq 5$
\\
  \hline
$\mathcal{C}^B(\Omega)$ & ball, $d \geq 2$ & open question & 
open question
& 
open question
\\
\hline  
\end{tabular}
\caption{Analogue of Table \ref{randomdesignsummary} for the
  fixed-design setting} 
\label{fixeddesignsummary}
\end{table}

The larger
minimax rate of $n^{-2/(d+1)}$ (compared to $n^{-4/(d+4)}$ for
polytopal $\Omega$) in the random design case can be attributed to the
increased metric entropy (in the $L_2$ metric with respect to the continuous uniform probability measure on $\Omega$) of $\C^B(\Omega)$ due to the curvature of the
boundary of $\Omega$ (see \cite[Subsection 2.10]{gao2017entropy}). In the fixed design case, the boundary of
$\Omega$ is also expected to elevate the minimax rate above
$n^{-4/(d+4)}$. However, the precise minimax rate in this context seems
hard to determine (note that there are no existing metric entropy results for $\C^B(\Omega)$ in the discrete $L_2$ metric with respect to the grid points of the fixed design) and it could range between  $n^{-4/(d+4)}$ and
$n^{-2/(d+1)}$. Furthermore, the question of whether the LSE over 
$\C^B(\Omega)$ and the unrestricted convex LSE are optimal or
suboptimal in the fixed design setting for non-polytopal $\Omega$ also
remains open.

\subsection{Additional discussion on the random-design setting}\label{gendensran}
In our random design setting with $X_1, \dots, X_n
\overset{\text{i.i.d}}{\sim} \P$, we assumed that $\P$ is the uniform
distribution on the convex body $\Omega$. From an examination
of our proofs, it should be clear that all our random design results
would continue to work under the more general assumption that $\P$ has a density on
$\Omega$ that is bounded from above and below by 
positive constants. With this assumption, our bounds will involve
additional multiplicative constants depending on the density
constants. We worked with the simpler uniform distribution
assumption because, essentially, the proof in the more general case is
reduced to the uniform case.

One can also consider the case where $\P$ has full support over $\R^d$
such as the case when $\P$ is the standard multivariate normal
vdistribution on $\R^d$ (here $\Omega = \R^d$). In this case,
boundedness would not make sense as non-constant convex functions
cannot be bounded over the whole of $\R^d$. As the Lipschitz assumption
is still reasonable, one may study the minimax rate over
$\C_L(\R^d)$, and the optimality of the 
Lipschitz convex LSE $\hat{f}_n(\C_L(\R^d))$ over $\C_L(\R^d)$ (note
that the loss function is still 
given by \eqref{ranloss} but now $\P$ has full support over
$\R^d$). When $\P$ is the standard multivariate normal distribution,
we believe that the minimax rate over $\C_L(\R^d)$ should be
$n^{-4/(d+4)}$ and that  the supremum 
risk of $\hat{f}_n(\C_L(\R^d))$ over $\C_L(\R^d)$ is bounded below by
$n^{-2/d}$ (up to logarithmic factors) for $d \geq 5$. The intuition
is that $\P\{x \in \R^d : \|x\| > r\}$ decreases exponentially in $r$,
while the metric entropy (of bounded Lipschitz convex functions)
over $\{x \in \R^d : \|x\| \leq r\}$ only grows polynomially in
$r$. We leave a principled study of such unbounded design
distributions to future work. 

\subsection{Beyond Gaussian noise}\label{subgau}
Throughout, we assumed that the errors in the regression model
\eqref{eq:RegMdl} 
are Gaussian, i.e.  $\xi_1, \dots, \xi_n \overset{\text{i.i.d}}{\sim} N(0,
\sigma^2)$. It is natural to ask if 
the results continue to hold if the errors have mean zero and variance
$\sigma^2$ but a non-Gaussian distribution. For sub-Gaussian errors, we
believe that certain variants of our results can be proved with
additional work. One important bottleneck in the extension of our
main results (Theorem \ref{implb.blse} and Theorem \ref{minsub}) to
sub-Gaussian errors is the result of 
\cite{Chat14} stated as Theorem \ref{chat}. This result was
originally stated and proved in \cite{Chat14} for Gaussian errors and
we do not know if an extension to sub-Gaussian errors exists in the
literature. The proof is mainly based on the concentration of the
random variable (for fixed $t > 0$):
\begin{equation*}
  F(\vec \xi,t): = \sup_{g \in \F: \ell_{\P_n}(f, g) \leq t} \frac{1}{n} \sum_{i=1}^n
  \xi_i \left(f(X_i) - g(X_i) \right)
\end{equation*}
which is proved in \cite{Chat14}, by using the fact that the above is a
$t$-Lipschitz function of i.i.d Gaussian errors $\vec \xi:= (\xi_1, \dots,
\xi_n)$. 

For sub-Gaussian $\xi_1, \dots, \xi_n$, one can first argue boundedness of
$\xi_1, \dots, \xi_n$ by $C\sqrt{\log n}$ with high probability (say
probability of $1-O(n^{-1})$), and then
invoke concentration of convex Lipschitz functions of bounded random
variables (see e.g., \cite[Theorem 6.6]{Talagrand96newlook}); note that $\vec \xi \mapsto F(\vec \xi, t)$ is convex in $\vec \xi$. This
should lead to a version of Theorem \ref{chat}, for sub-Gaussian errors
(although with weaker control on the probabilities involved). 

Note that the proofs of Theorems \ref{implb.blse} and \ref{minsub}, also use certain other tools such as the Sudakov minoration inequality which are also only valid for Gaussian errors. Their use should also be replaced by
appropriate multiplier inequality for the suprema of an empirical
process (such results can be found in e.g., \cite[Chapter
2.9]{vdVaartWellner96} and \cite{han2019convergence}). We leave, to
future work, a rigorous extension of our results 
to sub-Gaussian errors.

\subsection{Rates of convergence in the interior of the domain
  $\Omega$}\label{domainint}
It is natural to wonder if our suboptimality results are caused by
boundary effects, and if the LSEs are optimal or suboptimal in the
interior of the domain $\Omega$. Concretely, let $\Omega_0$ denote a
compact, convex region that is contained in the interior of
$\Omega$, and that the loss function is given by
\begin{equation}\label{modilo}
  \int_{\Omega_0} \left(f(x) - g(x) \right)^2 d\P(x). 
\end{equation}
The question then is whether the LSEs considered in Theorem
\ref{implb.blse} are still suboptimal under the above modified loss
function. This is a difficult question that is hard to resolve with
the methods used for the proof of Theorem \ref{implb.blse}. However,
we believe that the result should be true because of the following
heuristic arguments. Consider, for concreteness, the bounded convex
LSE $\hat{f}_n(\C^B(\Omega))$. Every function in $\C^B(\Omega)$ is
actually both $B$-bounded and $O(B)$-Lipschitz inside the smaller domain
$\Omega_0$ -- \emph{regardless} of the shape of
$\Omega_0$). Because the entropy of $O(B)$-bounded and Lipschitz convex functions
on $\Omega_0$ equals $C(B) \cdot \epsilon^{-d/2}$ regardless of the shape of
$\Omega_0$, it gives us reason to believe that
$\hat{f}_n(\C^B(\Omega))$ would be minimax suboptimal (for $d \geq 5$)
with respect to the modified loss \eqref{modilo}. Proving this is
challenging, mainly because the behavior of the LSE
$\hat{f}_n(\C^B(\Omega))$ inside $\Omega_0$ is also influenced
(in a complicated way) by the observations lying outside
$\Omega_0$. Therefore, we would like to highlight this question as an open problem -- as we do not know how to ``decouple'' the performance of the LSE from the domain $\Omega_0$ from $\Omega\setminus \Omega_0$ under $\C^B(\Omega)$.

\subsection{Results for LSEs with smoothness constraints (without
  convexity)}\label{gensublse}
All the results in this paper apply to LSEs over classes of convex
functions (with additional constraints such as boundedness and/or
being Lipschitz). It is natural to ask if similar results of
suboptimality holds for LSEs with purely smoothness constraints
(without any shape constraints such as convexity). The methods of this
paper are closely tied to convexity, and we leave a study of purely
smoothness constrained LSEs for future work. For a concrete problem in
this direction, consider the class of $L$-Lipschitz functions on
$\Omega$:
\begin{equation*}
 \mathcal{L} := \left\{f: \Omega \rightarrow \R \text{ such that } f \text{ is }
    L\text{-Lipschitz on } \Omega \right\}. 
\end{equation*}
For a natural $\Omega$ (e.g., $\Omega = [-1, 1]^d$), the question is
whether the LSE over $\mathcal{L}$ is minimax optimal over
$\mathcal{L}$ for all $d \geq 1$. One can also ask the same question
for function classes with constraints on second (and higher) order
derivatives. We would like to highlight these as open problems -- as our techniques do not apply on these classes.

\begin{acks}
We are truly thankful to the Associate Editor and three anonymous 
referees for their comprehensive reviews of our earlier
manuscript. Their insightful feedback significantly enhanced both the
content and organization of the paper.   
\end{acks}

\begin{funding}
  The first author was funded by the Center for Minds, Brains and
  Machines, funded by NSF award CCF-1231216. The second author was
  funded by NSF Grant OCA-1940270. The third author was funded by NSF
  CAREER Grant DMS-1654589. The fourth author was funded by NSF Grant
  DMS-1712822.  
\end{funding}

\appendix

\newpage

The rest of this paper consists of three sections: Appendix
\ref{minimax.proofs}, 
Appendix \ref{rd.proofs} and Appendix \ref{fd.proofs}. Appendix
\ref{minimax.proofs} contains proofs of all results in Section
\ref{minimax}. Appendix \ref{rd.proofs} contains proofs of all results
in Section \ref{sec.mores}. The proof sketch given in Subsection
\ref{pfskeblse} is followed and results quoted in that subsection are
also proved in Appendix \ref{rd.proofs}. Appendix \ref{fd.proofs}
contains proofs of all results in Section \ref{fdresults}. The proof
sketch given in Subsection \ref{pfskefdr} is followed and results
quoted in that subsection are also proved in Appendix
\ref{fd.proofs}.

\section{Proofs of Minimax Rates for Convex
  Regression}\label{minimax.proofs}
This section contains the proofs of Proposition
\ref{MinimaxLips}, Proposition \ref{minratelin} and Proposition
\ref{fixlobo} (these results were stated in Section \ref{minimax}).

For the proof of Proposition \ref{MinimaxLips} below, we need Lemma
\ref{TwoLosses} which allows switching between the two loss functions
$\ell_{\P_n}^2$ and $\ell_{\P}^2$. Lemma
\ref{TwoLosses} is stated in 
Section \ref{rd.proofs} because it is also crucial for the proof of 
Theorem \ref{implb.blse}.

\begin{proof}[Proof of Proposition \ref{MinimaxLips}]
  The lower bound for the minimax rate follows from the corresponding
  result for the smaller class $\C_L^L(\Omega)$. For the upper bound,
  let us first describe the estimator that we work with. Let $T_i :=
  Y_i - \bar{Y}$ (where $\bar{Y} := (Y_1 + \dots + Y_n)/n$) for each
  $i = 1, \dots, n$. For a finite subset $\finiteset$ of the function
  class $\C_{L}^{2L}(\Omega)$, let $\hat{h}_n^{\finiteset}$ denote any least
  squares 
  estimator over $\finiteset$ for the data $(X_1, T_1), \dots, (X_n,
  T_n)$. In other words, 
  \begin{equation*}
    \hat{h}_n^{\finiteset} \in \argmin_{h \in \finiteset} \sum_{i=1}^n \left(T_i
      - h(X_i) 
    \right)^2. 
  \end{equation*}
  Our estimator is given by the convex function: 
  \begin{equation}\label{estbarmean}
    x \mapsto \bar{Y} + \hat{h}_n^{\finiteset}(x)
  \end{equation}
  for an appropriately chosen covering subset $\finiteset$ of
  $\C_L^{2L}(\Omega)$. The proof below shows that this estimator
  achieves the rate $n^{-4/(d+4)}$ uniformly over $\C_L(\Omega)$. Fix
  a ``true'' function $f_0 \in \C_L(\Omega)$. We first bound the risk of
  $\hat{h}_n^{\finiteset}$. For any arbitrary nonnegative valued
  functional $H$, 
  the following inequality holds:
  \begin{equation*}
    H(\hat{h}_n^{\finiteset}) \leq \sum_{h \in \finiteset} H(h) \exp
    \left(\frac{1}{8 
        \sigma^2} \sum_{i=1}^n (T_i - \hat{h}_n^{\finiteset}(X_i))^2 -
      \frac{1}{8 
      \sigma^2} \sum_{i=1}^n (T_i - h(X_i))^2 \right) 
\end{equation*}
because of the presence of the term corresponding to $h = \hat{h}_n^{\finiteset}
\in \finiteset$ on the right hand side. Because
$\hat{h}_n^{\finiteset}$ minimizes sum of 
squares over $\finiteset$, we can replace $\hat{h}_n^{\finiteset}$ by
any other element $h' \in \finiteset$ leading to
  \begin{equation}\label{binq}
    H(\hat{h}_n^{\finiteset}) \leq \sum_{h \in \finiteset} H(h) \exp \left(\frac{1}{8
        \sigma^2} \sum_{i=1}^n (T_i -h'(X_i))^2 - \frac{1}{8
      \sigma^2} \sum_{i=1}^n (T_i - h(X_i))^2 \right)
\end{equation}
for every $h' \in \finiteset$. We now take the expectation on both
sides of the 
above inequality conditioned on $X_1, \dots, X_n$. The following
function $h_0 : \Omega \rightarrow \R$ will play a key role in the
sequel
\begin{equation*}
 h_0(x) := f_0(x) - \frac{f_0(X_1) + \dots + f_0(X_n)}{n}. 
\end{equation*}
$h_0$ is Lipschitz with Lipschitz constant $L$ and, moreover, $h_0$ is
uniformly bounded by $2L$ because:
\begin{align*}
  |h_0(x)| &\leq \frac{1}{n} \sum_{i=1}^n|f_0(x) - f_0(X_i)| \\ &\leq
  \frac{L}{n} \sum_{i=1}^n \|x - X_i\| \leq L \sup_{x,x' \in \Omega}
  \|x - x'\| \leq 2L. 
\end{align*}
In other words, $h_0 \in \C_L^{2L}(\Omega)$. In order to take the
expectation of both sides of \eqref{binq}, note that the
conditional distribution of $T_1, \dots, T_n$ given  $X_1, \dots, 
X_n$ is multivariate normal with mean vector $(h_0(X_1), \dots,
h_0(X_n))$ and covariance matrix $\sigma^2 \left(I - \frac{\one
    \one^T}{n}\right) \preceq \sigma^2 I$. Here $I$ is the identity
matrix and $\one$ is 
the $n \times 1$ vector of ones. By a straightforward calculation, we 
obtain
\begin{align*}
  &\E H\left(\hat{h}_n^{\finiteset} \mid X_1, \dots, X_n \right) \\
  &\leq \sum_{h \in \finiteset} H(h) \exp\left(\frac{n}{8 \sigma^2}
    \ell^2_{\P_n}\left(h_0, h' \right)  - \frac{n}{8 \sigma^2}
    \ell_{\P_n}^2\left(h_0, h \right) + \frac{n}{32 \sigma^2}
    \ell_{\P_n}^2\left(h, h' \right)  \right) \\
  &\leq \sum_{h \in \finiteset} H(h) \exp\left(\frac{3n}{16 \sigma^2}
    \ell^2_{\P_n}\left(h_0, h' \right)  - \frac{n}{16 \sigma^2}
    \ell_{\P_n}^2\left(h_0, h \right)  \right)  \\
  &=   \exp\left(\frac{3n}{16 \sigma^2}
    \ell^2_{\P_n}\left(h_0, h' \right)  \right) \sum_{h \in \finiteset} H(h)
    \exp\left(- \frac{n}{16 \sigma^2}
    \ell_{\P_n}^2\left(h_0, h \right)   \right) 
\end{align*}
where we used the standard fact
$\ell_{\P_n}^2(h, h') \leq 2 \ell_{\P_n}^2(h_0, h) + 2
\ell_{\P_n}^2(h_0, h')$ for the penultimate inequality. As $h' \in
\finiteset$ is arbitrary, we can take an infimum over $h' \in
\finiteset$ to obtain 
\begin{align*}
  &\E H\left(\hat{h}_n^{\finiteset} \mid X_1, \dots, X_n \right) \\
    &\leq   \exp\left(\frac{3n}{16 \sigma^2}
    \inf_{h' \in \finiteset}\ell^2_{\P_n}\left(h_0, h' \right)
      \right) \sum_{h \in \finiteset} H(h) 
    \exp\left(- \frac{n}{16 \sigma^2}
    \ell_{\P_n}^2\left(h_0, h \right)   \right) 
\end{align*}
Because $\ell_{\P_n}^2(h_0, h') \leq \|h_0 - h'\|_{\infty}^2$ and $h_0
\in \C_{L}^{2L}(\Omega)$,
\begin{align*}
  \inf_{h' \in \finiteset} \ell_{\P_n}^2\left(h_0, h' \right) \leq \inf_{h' \in
  \finiteset} \|h_0 - h'\|_{\infty}^2 \leq \sup_{g \in \C_L^{2L}(\Omega)} \inf_{h' \in
  \finiteset} \|g - h'\|_{\infty}^2.  
\end{align*}
We have thus proved
\begin{align*}
  &\E H\left(\hat{h}_n^{\finiteset} \mid X_1, \dots, X_n \right) \\
    &\leq   \exp\left(\frac{3n}{16 \sigma^2}
\sup_{g \in \C_L^{2L}(\Omega)} \inf_{h' \in
  \finiteset} \|g - h'\|_{\infty}^2 \right) \sum_{h \in \finiteset} H(h)
    \exp\left(- \frac{n}{16 \sigma^2}
    \ell_{\P_n}^2\left(h_0, h \right)   \right). 
\end{align*}
The choice
\begin{align*}
  H(h) := \exp\left(\frac{n}{16 \sigma^2}
    \ell_{\P_n}^2\left(h_0, h \right)   \right). 
\end{align*}
leads to
\begin{align*}
 & \E \left[ \exp \left(\frac{n}{16 \sigma^2} \ell_{\P_n}^2\left(h_0,
  \hat{h}_n^{\finiteset} \right) \right) \bigg| X_1, \dots, X_n \right] \\ &\leq \exp \left(\frac{3n}{16 \sigma^2}
\sup_{g \in \C_L^{2L}(\Omega)} \inf_{h' \in
  \finiteset} \|g - h'\|_{\infty}^2 + \log |\finiteset| \right) 
\end{align*}
where $|\finiteset|$ denotes the cardinality of the finite set
$\finiteset$. Jensen's 
inequality on the left hand side gives 
\begin{align*}
  \E \left(\ell_{\P_n}^2 \left(h_0, \hat{h}_n^{\finiteset} \right)
  \mid X_1, 
  \dots, X_n \right) \leq 3\sup_{g \in \C_L^{2L}(\Omega)} \inf_{h' \in
  \finiteset} \|g - h'\|_{\infty}^2 + \frac{16 \sigma^2}{n} \log
  |\finiteset|.  
\end{align*}
Taking expectations with respect to $X_1, \dots, X_n$ (the right hand
side above is actually nonrandom), we get the same bound
unconditionally:
\begin{align*}
  \E \ell_{\P_n}^2 \left(h_0, \hat{h}_n^{\finiteset} \right)  \leq
  3\sup_{g \in 
  \C_L^{2L}(\Omega)} \inf_{h' \in 
  \finiteset} \|g - h'\|_{\infty}^2 + \frac{16 \sigma^2}{n} \log
  |\finiteset|.  
\end{align*}
We now take $\finiteset$ to be an $\epsilon$-covering subset of
$\C_L^{2L}(\Omega)$ for an appropriate
$\epsilon$. By the classical metric entropy result of
\cite{Bronshtein76}, for each $\epsilon > 0$, we can find
$\finiteset$ such that
\begin{align*}
  \sup_{g \in \C_L^{2L}(\Omega)} \inf_{h' \in
  \finiteset} \|g - h'\|_{\infty}^2 \leq \epsilon^2 ~~ \text{ and } ~~
  \log |\finiteset| 
  \leq C_d \left(\frac{L}{\epsilon} \right)^{d/2}. 
\end{align*}
Taking $\epsilon$ to be of order $n^{-2/(d+4)}$ and $\finiteset$ to
the 
corresponding $\epsilon$-covering subset of $\C_L^{2L}(\Omega)$, we
deduce
\begin{align*}
  \E \ell_{\P_n}^2 \left(h_0, \hat{h}_n^{\finiteset} \right) \leq C_{d, L,
  \sigma} n^{-4/(d+4)}. 
\end{align*}
We convert this bound to $\ell_{\P}^2$ via Lemma \ref{TwoLosses}: 
\begin{align*}
  \E \ell_{\P}^2 \left(h_0, \hat{h}_n^{\finiteset} \right) &\leq 8   \E
                                                   \ell_{\P_n}^2
                                                   \left(h_0,
                                                   \hat{h}_n^{\finiteset} \right)
                                                       + 2 \E
                                                       \left(\ell_{\P}(h_0,
                                                       \hat{h}_n^{\finiteset})
                                                             - 
                                                       2 \ell_{\P_n}(h_0,
                                                       \hat{h}_n^{\finiteset})
                                                  \right)^2 \\
  &\leq 8   \E
                                                   \ell_{\P_n}^2
                                                   \left(h_0,
                                                   \hat{h}_n^{\finiteset}
    \right) 
                                                       + 2 \E \sup_{f,
    g \in \C_L^{2L}(\Omega)}
                                                       \left(\ell_{\P}(f,
                                                       g) -
                                                       2 \ell_{\P_n}(f,
                                                       g)
    \right)^2 \\
  &\leq 8   \E
                                                   \ell_{\P_n}^2
                                                   \left(h_0,
                                                   \hat{h}_n^{\finiteset} \right)
    + C_d n^{-4/(d+4)} L^{2d/(d+4)} + C \frac{L^2}{n} \\ &\leq C_{d, L,
    \sigma} n^{-4/(d+4)}. 
\end{align*}
By the elementary inequality $(a+b)^2 \geq a^2/2 - b^2$, we get
\begin{align*}
  \ell_{\P}^2\left(h_0, \hat{h}_n^{\finiteset} \right) \geq \frac{1}{2}
  \ell_{\P}^2\left(f_0, \mu + \hat{h}_n^{\finiteset} \right) -
  \left(\frac{f_0(X_1) + \dots + f_0(X_n)}{n} - \mu \right)^2
\end{align*}
where $\mu := \E f_0(X_1)$. Thus
\begin{align*}
  \E \ell_{\P}^2\left(f_0, \mu + \hat{h}_n^{\finiteset} \right) &\leq 2 \E
  \ell_{\P}^2\left(h_0, \hat{h}_n^{\finiteset} \right) + 2 \E
                                                       \left(\frac{f_0(X_1) + \dots + f_0(X_n)}{n} - \mu \right)^2 \\
  &\leq C_{d, L, \sigma} n^{-4/(d+4)} + \frac{1}{n} \E \left(f_0(X_1)
    - f_0(X_2) \right)^2 \\ &\leq C_{d, L, \sigma} n^{-4/(d+4)} + \frac{4
    L^2}{n} \leq C_{d, L, \sigma} n^{-4/(d+4)}. 
\end{align*}
Finally
\begin{align*}
  \E \ell_{\P}^2\left(f_0, \bar{Y} + \hat{h}_n^{\finiteset} \right) &\leq 2 \E
                                                           \ell_{\P}^2\left(f_0,
                                                           \mu +
                                                           \hat{h}_n^{\finiteset} 
                                                           \right) + 2
                                                           \E 
                                                           \left(\bar{Y}
                                                           - \mu
                                                           \right)^2
  \\
  &\leq C_{d, L, \sigma} n^{-4/(d+4)} + \frac{2 \sigma^2}{n} \leq
    C_{d, L, \sigma} n^{-4/(d+4)}. 
\end{align*} 
This completes the proof of Proposition \ref{MinimaxLips}. 
\end{proof}

We next turn to the proof of Proposition \ref{minratelin} which gives
an upper bound on the minimax rate for the class $\F^{\lin}(\Omega)$
in the fixed design setting. This proof is similar
to that of Proposition \ref{MinimaxLips} but somewhat simpler as,
in the fixed design setting, we do not need to switch between the two
losses $\ell^2_{\P_n}$ and $\ell^2_{\P}$. The metric entropy result
stated in Theorem \ref{polytope} is an important ingredient of the
following proof. 

\begin{proof}[Proof of Proposition \ref{minratelin}]
  Let us describe the estimator that we work with. Let the linear
  regression solution be given by the affine function $\hat{g}_n$: 
  \begin{equation*}
    \hat{g}_n \in \argmin_{g \in \A(\Omega)} \sum_{i=1}^n \left(Y_i -
      g(X_i) \right)^2. 
  \end{equation*}
 Let $T_i := Y_i - \hat{g}_n(X_i)$ for $i = 1, \dots, n$ be the
 residuals after linear regression. For a finite class $\finiteset$ of
 functions on $\Omega$, let $\hat{h}_n^{\finiteset}$ denote any least squares
 estimator over $\finiteset$ for the data $(X_1, T_1), \dots, (X_n, T_n)$:
 \begin{equation*}
   \hat{h}_n^{\finiteset} \in \argmin_{h \in \finiteset} \sum_{i=1}^n
   \left(T_i - h(X_i) 
   \right)^2 
 \end{equation*}
 We work with the estimator $\hat{g}_n + \hat{h}_n^{\finiteset}$ for an
 appropriate choice of $\finiteset$, and bound its supremum risk over the class
 $\F^{\lin}(\Omega)$. Fix a ``true'' function $f_0 \in
 \F^{\lin}(\Omega)$.  Let $g_o$ be the closest affine function to
 $f_0$ in the $\ell_{\P_n}$ metric:
 \begin{equation*}
   g_0 = \argmin_{g \in \A(\Omega)} \sum_{i=1}^n \left(f_0(X_i) -
     g(X_i) \right)^2. 
 \end{equation*}
 A key role in the sequel will be played by the convex function $h_0
 := f_0 - g_0$ (note that $h_0$ is convex because $f_0$ is convex and
 $g_0$ is affine). Fix a nonnegative valued functional $H$ and start
 with the inequality \eqref{binq}:
 \begin{equation*}
   H(\hat{h}_n^{\finiteset}) \leq \sum_{h \in \finiteset} H(h) \exp \left(\frac{1}{8
        \sigma^2} \sum_{i=1}^n (T_i -h'(X_i))^2 - \frac{1}{8
      \sigma^2} \sum_{i=1}^n (T_i - h(X_i))^2 \right)
\end{equation*}
for every $h' \in \finiteset$. Now take expectations on both sides above
(note that we are working 
in fixed design so $X_1, \dots, X_n$ are fixed grid points in
$\Omega$). To calculate the expectation of the right hand side,
observe that $(T_1, \dots, T_n)$ is multivariate normal with mean
vector $(h_0(X_1), \dots, h_0(X_n))$ and a covariance matrix $\Sigma$
which is dominated by $\sigma^2 I_n$ in the positive semi-definite
ordering. As in the proof of Proposition \ref{MinimaxLips}, we deduce
\begin{equation*}
  \E H(\hat{h}_n^{\finiteset}) \leq \exp\left(\frac{3n}{16 \sigma^2}
    \inf_{h' \in \finiteset}\ell^2_{\P_n}\left(h_0, h' \right)  \right) \sum_{h \in \finiteset} H(h)
    \exp\left(- \frac{n}{16 \sigma^2}
    \ell_{\P_n}^2\left(h_0, h \right)   \right) 
\end{equation*}
Taking
\begin{equation*}
  H(h) := \exp \left(\frac{n}{16 \sigma^2} \ell_{\P_n}^2(h_0, h) \right)
\end{equation*}
and using Jensen's inequality (as in the proof of Proposition
\ref{MinimaxLips}), we obtain
\begin{equation*}
  \E \ell_{\P_n}^2 \left(h_0, \hat{h}_n^{\finiteset} \right) \leq 3 \inf_{h' \in
    \finiteset} \ell_{\P_n}^2 \left(h_0, h' \right) + \frac{16 \sigma^2}{n}\log
  |\finiteset|. 
\end{equation*}
We now specify the finite function class $\finiteset$. For this note that
because $f_0 \in \F^{\lin}(\Omega)$ (i.e., $\inf_{g \in \A(\Omega)}
\ell_{\P_n}(f_0, g_0) \leq \lin$), the function $h_0 = f_0 - g_0$
belongs to the class
\begin{equation*}
  \mathcal{H} := \left\{h \in \C(\Omega) : \frac{1}{n} \sum_{i=1}^n h^2(X_i) \leq
    \lin^2\right\}. 
\end{equation*}
Theorem \ref{polytope} (with $p = 2$ and $t = \lin$) gives an upper
bound for the $\epsilon$-metric entropy of the above function class
under the $\ell_{\P_n}$ metric. This result implies the existence of a
finite set $\finiteset$ satisfying the twin properties:
\begin{equation*}
  \sup_{h_0 \in \mathcal{H}} \inf_{h' \in \finiteset} \ell_{\P_n}^2\left(h_0,
    h' \right) \leq \epsilon^2 ~~ \text{ and } ~~ \log |\finiteset| \leq
  \left(c_d \log n \right)^F \left(\frac{\lin}{\epsilon}
  \right)^{d/2}. 
\end{equation*}
As a result
\begin{equation*}
  \E \ell_{\P_n}^2 \left(h_0, \hat{h}_n^{\finiteset} \right) \leq 3 \epsilon^2 +
  \frac{16 \sigma^2}{n}   \left(c_d \log n \right)^F \left(\frac{\lin}{\epsilon}
  \right)^{d/2}. 
\end{equation*}
The choice $\epsilon = n^{-2/(d+4)}(c_d \log n)^{2F/(d+4)}$ gives
\begin{equation*}
  \E \ell_{\P_n}^2 \left(h_0, \hat{h}_n^{\finiteset} \right) \leq C_{\lin, \sigma}
  n^{-4/(d+4)} (c_d \log n)^{4F/(d+4)}. 
\end{equation*}
Finally
\begin{align*}
  \E \ell_{\P_n}^2 \left(f_0, \hat{g}_n + \hat{h}_n^{\finiteset} \right) &=
 \E \ell_{\P_n}^2 \left(g_0 + h_0, \hat{g}_n + \hat{h}_n^{\finiteset} \right)
  \\ &\leq 2  \E \ell_{\P_n}^2 \left(g_0, \hat{g}_n \right) +\E
       \ell_{\P_n}^2 \left(h_0, \hat{h}^{\finiteset}_n \right) \\
  &\leq \frac{C_d \sigma^2}{n} + C_{\lin, \sigma}
    n^{-4/(d+4)} (c_d \log n)^{4F/(d+4)} \\
  &\leq C_{\lin, \sigma}
    n^{-4/(d+4)} (c_d \log n)^{4F/(d+4)} 
\end{align*}
completing the proof of Proposition \ref{minratelin}. 
\end{proof}

Next we give the proof of Proposition \ref{fixlobo}. This proof is
based on the use of Assouad's lemma with a standard construction.  

\begin{proof}[Proof of Proposition \ref{fixlobo}]
Define the smooth function:
  \begin{equation*}
g(x_1,x_2,\ldots,x_d)=\twopiece{\sum_{i=1}^d
	\cos^3(\pi
        x_i)}{(x_1,x_2,\ldots,x_d)\in[-1/2,1/2]^d}{0}{(x_1,x_2,\ldots,x_d)\notin[-1/2,1/2]^d}. 
\end{equation*}
Note that $\frac{\partial^2 g}{\partial x_i \partial
  x_j} = 0$ for $i \neq j$ and
\begin{equation*}
  \left|\frac{\partial^2 g}{\partial x_i^2} (x_1, \dots, x_d) \right|
  \leq \frac{4 \sqrt{2}}{3} \pi^2
\end{equation*}
which implies that the Hessian of $g$ is dominated by $(4\sqrt{2}
\pi^2/3)$ times the identity matrix. It is also easy to check that the
Hessian of $g$ equals zero on the boundary of $[-0.5, 0.5]^d$.

Fix $\eta > 0$ and let $\grid^{\eta}$ be the $\eta$-grid consisting of all
points $(k_1 \eta, \dots, k_d \eta)$ for $k_1, \dots, k_d \in
\mathbb{Z}$. Let
\begin{equation*}
  T^{\eta} := \{(u_1, \dots, u_d) \in \grid^{\eta} : \prod_{j=1}^d \left[u_j -
      0.5 \eta, u_j + 0.5 \eta \right] \subseteq \Omega  \}. 
\end{equation*}
Assume that $\eta$ is small enough so that the cardinality
$m := |T^{\eta}|$ of $T^{\eta}$ is at least $c_d \eta^{-d}$. For every
point $u \in T^{\eta}$, let
\begin{equation*}
  g_u(x_1, \dots, x_d) := \eta^2 g \left(\frac{x_1 - u_1}{\eta},
    \dots, \frac{x_d - u_d}{\eta} \right). 
\end{equation*}
Clearly $g_u$ is supported on the cube $\prod_{j=1}^d [u_j - 0.5 \eta,
u_j + 0.5 \eta]$ and these cubes for different points $u \in T^{\eta}$
have disjoint interiors.

For each binary vector $\xi = (\xi_{u}, u \in T^{\eta}) \in \{0,
1\}^m$, consider the function  
\begin{equation}\label{gudef}
  G_{\xi}(x) = f_0(x) + \frac{3}{4 \sqrt{2} \pi^2} \sum_{u \in
    T^{\eta}} \xi_u g_u(x)
\end{equation}
where $f_0(x) := \|x\|^2$. It can be verified that $G_{\xi}$ is convex
because $f_0$ has constant Hessian equal to $2$ times the identity,
the Hessian of $g_{s}$ is 
bounded by $(4 \sqrt{2} \pi^2/3)$ and the supports of $g_{s}, s \in
\grid \cap \Omega$ have disjoint interiors. For a sufficiently small
constant $\eta$ and a sufficiently large constant $L$, it is also
clear that $G_{\xi} \in \C_L^L(\Omega)$. We use Assouad's lemma
with this collection $\{G_{\xi}, \xi \in \{0, 1\}^m\}$: 
\begin{equation*}
\mathfrak{R}_n^{\mathrm{fixed}}(\C^L_L(\Omega)) \geq \frac{m}{8}
\min_{\xi \neq \xi'} \frac{\ell_{\P_n}^2(G_{\xi},
  G_{\xi'})}{\Upsilon(\xi, \xi')} \min_{\Upsilon(\xi, \xi') = 1}
\left(1 - \|\P_{G_{\xi}} - \P_{G_{\xi'}}\|_{\text{TV}} \right) 
\end{equation*}
where $\Upsilon(\xi, \xi') := \sum_{u \in T^{\eta}} I\{\xi_u \neq
\xi'_{u}\}$ is the Hamming distance, and $\P_{f}$ is the
multivariate normal distribution with mean vector $(f(X_1), \dots,
f(X_n))$ and covariance matrix $\sigma^2 I_n$. We bound $\ell_{\P_n}^2(G_{\xi},
G_{\xi'})$ from below as
\begin{align*}
  \ell_{\P_n}^2(G_{\xi}, G_{\xi'}) &= \frac{1}{n} \sum_{i=1}^n
                                     \left(G_{\xi}(X_i) -
                                     G_{\xi'}(X_i) \right)^2 \\
  &= \frac{1}{n} \sum_{i=1}^n \left\{\frac{3}{4 \sqrt{2} \pi^2}
    \sum_{u \in T^{\eta}} \left(\xi_u - \xi'_u \right) g_u(X_i)
    \right\}^2  \\
  &= \frac{9}{32 \pi^4 n} \sum_{i=1}^n \sum_{u \in T^{\eta}} I\{\xi_u
    \neq \xi'_u\} g_u^2(X_i) \\
  &= \frac{9}{32 \pi^4} \sum_{u \in T^{\eta}}I\{\xi_u
    \neq \xi'_u\} \frac{1}{n} \sum_{i=1}^n  g_u^2(X_i) 
\end{align*}
To bound $\frac{1}{n} \sum_{i=1}^n  g_u^2(X_i)$ from below, note that
$g_u$ is supported on the cube $\prod_{j=1}^d [u_j - 0.5\eta, u_j +
0.5\eta]$ which has volume $\eta^d$, and also that the ``magnitude''
of $g_u$ on this cube is of order $\eta^2$ (equivalently, the squared
magnitude equals $\eta^4$). Thus when $\eta \geq c 
\delta$ for some $c$, it is straightforward to check
\begin{align*}
  \frac{1}{n} \sum_{i=1}^n  g_u^2(X_i) \geq C_d \eta^d \times \eta^4 =
  C_d \eta^{d+4}. 
\end{align*}
This gives
\begin{align*}
\min_{\xi \neq \xi'}  \frac{\ell_{\P_n}^2(G_{\xi}, 
  G_{\xi'})}{\Upsilon(\xi, \xi')}  \geq C_d \eta^{d+4} 
\end{align*}
For bounding $\|\P_{G_{\xi}} - \P_{G_{\xi'}}\|_{\text{TV}}$, we use
Pinsker's inequality to get
\begin{align*}
  \|\P_{G_{\xi}} - \P_{G_{\xi'}}\|^2_{\text{TV}} &\leq \frac{1}{2} D
  \left(\P_{G_{\xi}} \mathrel{\Vert} \P_{G_{\xi'}} \right) \\ &=
  \frac{n}{4 \sigma^2} \ell_{\P_n}^2\left(G_{\xi}, G_{\xi'}\right) \\&=
                                                                \frac{9n}{124
  \sigma^2 
                                                                \pi^4}
                                                                \sum_{u
                                                                \in
                                                                T^{\eta}}I\{\xi_u
                                                                \neq
                                                                \xi'_u\}
                                                                \frac{1}{n}
                                                                \sum_{i=1}^n
  g_u^2(X_i) \\
  &\leq \tilde{C}_d \frac{n \eta^{d+4}}{\sigma^2} \Upsilon(\xi,
    \xi'). 
\end{align*}
Assouad's lemma (with $m \geq c_d \eta^{-d}$) thus gives 
\begin{align*}
\mathfrak{R}_n^{\mathrm{fixed}}(\C^L_L(\Omega)) \geq c_d \eta^{-d} C_d
  \eta^{d+4} \left(1 - \sqrt{\tilde{C}_d \frac{n
  \eta^{d+4}}{\sigma^2}} \right). 
\end{align*}
The choice
\begin{align*}
  \eta = \left(\frac{\sigma^2}{4 n \tilde{C}_d} \right)^{1/(d+4)}
\end{align*}
leads to
\begin{align*}
  \mathfrak{R}_n^{\mathrm{fixed}}(\C^L_L(\Omega)) \geq c_{d, \sigma}
  n^{-4/(d+4)}. 
\end{align*}
\end{proof}

\section{Proofs of the random design LSE rate lower
  bounds}\label{rd.proofs}    
This section contains the proof of Theorem \ref{implb.blse}. We follow
the sketch given in Subsection \ref{pfskeblse} and results quoted in that
subsection are also proved here. The first three subsections below
contain the proofs of Lemma \ref{GrUB}, Lemma \ref{GrLB} and Lemma
\ref{tfktildelb} respectively. Using these three lemmas, the proof of
Theorem \ref{implb.blse} is completed in Subsection
\ref{comple.implb.blse}. An important role is played by
the main bracketing entropy bound stated in Theorem
\ref{Thm:Bddcube} and this theorem is proved in Subsection
\ref{Gao:subsection}. Finally, Subsection \ref{adproofs} contains the
proofs of Lemma \ref{ballapp} and Lemma \ref{lbf0.ran}. 

\subsection{Proof of Lemma \ref{GrUB}}
The most important ingredient for the proof of Lemma \ref{GrUB} is the
bracketing entropy bound given by Theorem \ref{Thm:Bddcube} which is
proved in Subsection \ref{Gao:subsection}. We also use the following
two standard results from the theory of empirical processes and
Gaussian processes respectively.

\begin{lemma}\label{stanbrack}\cite[Theorem 5.11]{vdG00} 
  Let $\Q$ be a probability on a set $\mathcal{Z}$ and let
  $\Q_n$ be the empirical distribution of $Z_1, \dots, Z_n
  \overset{\text{i.i.d}}{\sim} \Q$. Let $\mathcal{H}$ be a class of
  real-valued functions on $\mathcal{Z}$ and assume that each function
  in $\mathcal{H}$ is  uniformly bounded  by $\Gamma > 0$. Then 
    \begin{equation}\label{expbo}
    \begin{split}
   & \E \sup_{h \in \mathcal{H}} |\Q_nh  - \Q h| \\ &\leq C \inf \left\{a
      \geq \frac{\Gamma}{\sqrt{n}} : a \geq \frac{C}{\sqrt{n}}
      \int_a^{\Gamma} \sqrt{\log N_{[\,]}(u, \mathcal{H}, L_2(\Q))} du
    \right\}.
    \end{split}
  \end{equation}
\end{lemma}

The following result is Dudley's entropy integral bound
\cite{Dudley67}. 
  \begin{theorem}[Dudley]\label{dud}
 Let $\xi_1, \dots, \xi_n \overset{\text{i.i.d}}{\sim} N(0,
 \sigma^2)$. Then for every deterministic $X_1, \dots, X_n$, every
 class of functions $\F$, $f \in \F$ and $t \geq 0$:   
  \begin{equation*}
    \begin{split}
  &    \E \sup_{g \in \F: \ell_{\P_n}(f, g) \leq t} \frac{1}{n} \sum_{i=1}^n \xi_i
      \left(g(X_i) - f(X_i) \right) \\ &\leq \sigma \inf_{0 < \theta \leq t/2}
      \left(\frac{12}{\sqrt{n}} \int_{\theta}^{t/2} \sqrt{\log
          N(\epsilon, \left\{g \in \F: \ell_{\P_n}(f, g) \leq t \right\}, \ell_{\P_n})}
        d\epsilon + 2 \theta \right).
     \end{split} 
    \end{equation*}
  \end{theorem}

In the course of proving Lemma \ref{GrUB}, we will need to work with
both the loss functions $\ell_{\P}$ and $\ell_{\P_n}$. The next result
states that these loss functions  are, up to the factor 2,
sufficiently close (of order $n^{-2/(d+4)}$ which is much smaller than
$n^{-1/d}$ for $d \geq 5$)  allowing us to switch between them.   

\begin{lemma}\label{TwoLosses}
  Suppose $\Omega$ is a convex body satisfying \eqref{OmegaAss}.
Then there exist positive constants $C$ and $C_d$ such that  
  \begin{equation*}
      \begin{split}
&  \P \left\{\sup_{f, g \in \C^B_L(\Omega)} \left(\ell_{\P_n}(f, g)
                                                                        -
    2 \ell_{\P}(f, g) \right) \leq C_d
  n^{-2/(d+4)}(B+L)^{d/(d+4)} \right\} \\ & \geq 1 - C \exp
\left(-\frac{n^{d/(d+4)}}{C_dB^{8/(d+4)}} \right), 
\end{split}
\end{equation*}
and
  \begin{equation*}
      \begin{split}
&  \P \left\{\sup_{f, g \in \C^B_L(\Omega)} \left(\ell_{\P}(f, g)
                                                                        -
    2 \ell_{\P_n}(f, g) \right) \leq C_d
  n^{-2/(d+4)}(B+L)^{d/(d+4)} \right\} \\ & \geq 1 - C \exp
\left(-\frac{n^{d/(d+4)}}{C_d B^{8/(d+4)}} \right),
\end{split}
\end{equation*}
and
\begin{equation*}
  \E \sup_{f,
    g \in \C_L^{B}(\Omega)}
                                                       \left(\ell_{\P}(f,
                                                       g) -
                                                       2 \ell_{\P_n}(f,
                                                       g)
    \right)^2 \leq C_d n^{-4/(d+4)} L^{2d/(d+4)} + \frac{C L^2}{n}, 
  \end{equation*}
and
\begin{equation*}
  \E \sup_{f,
    g \in \C_L^{B}(\Omega)}
                                                       \left(\ell_{\P_n}(f,
                                                       g) -
                                                       2 \ell_{\P}(f,
                                                       g)
    \right)^2 \leq C_d n^{-4/(d+4)} L^{2d/(d+4)} + \frac{C L^2}{n}. 
  \end{equation*}
Additionally, when $\Omega$ is a polytope whose number of facets is
bounded by a constant depending on $d$ alone, all these inequalities
continue to hold if $\C^B_L(\Omega)$ is replaced by the 
larger class $\C^B(\Omega)$ with $(B+L)$ replaced by $B$.  
\end{lemma}

\begin{proof}[Proof of Lemma \ref{TwoLosses}]
  We use the following result which can be found in \cite[Proof of
  Lemma 5.16]{vdG00}: Suppose $\F$ is a class of functions on $\Omega$
  that are uniformly bounded by $\Gamma > 0$. Then there exists a
  positive constant $C$ such that 
  \begin{equation}\label{conemp.1}
    \P \left\{\sup_{f, g \in \F} \left(\ell_{\P_n}(f, g) - 2
        \ell_{\P}(f, g) \right) > C a \right\} \leq C \exp \left(-
      \frac{n a^2}{C \Gamma^2} \right)
  \end{equation}
  and
  \begin{equation}\label{conemp.2}
    \P \left\{\sup_{f, g \in \F} \left(\ell_{\P}(f, g) - 2
        \ell_{\P_n}(f, g) \right) > C a \right\} \leq C \exp \left(-
      \frac{n a^2}{C \Gamma^2} \right)
  \end{equation}
  provided
  \begin{equation}\label{conemp.req}
    n a^2 \geq C \log N_{[\,]}(a, \F, L_2(\P)).
  \end{equation}
  We first apply this result to $\F = \C_L^B(\Omega)$ with $\Gamma =
  B$. From \cite{Bronshtein76}, we get 
\begin{equation}\label{gaoweb}
  \log N_{[\,]}(\epsilon, \C_L^B(\Omega), \metdis) \leq C_d
  \left(\frac{B+L}{\epsilon} \right)^{d/2}
\end{equation}
The above bound implies that \eqref{conemp.req} is
satisfied when $a$ is 
$n^{-2/(d+4)} (B+L)^{d/(d+4)}$ multiplied by a large enough dimensional
constant. Inequalities \eqref{conemp.1} and \eqref{conemp.2} then
lead to the two stated inequalities in Lemma \ref{TwoLosses}. The
expectation bounds are obtained by integrating the probability
inequalities. 

To prove the bounds for $\C^B(\Omega)$ for polytopal $\Omega$, just
use, instead of the result from \cite{Bronshtein76}, the following
due to \cite[Theorem 1.5]{gao2017entropy}:
\begin{equation}\label{bronweb}
  \log N_{[\,]}(\epsilon, \C^B(\Omega), \metdis) \leq C_d
  \left(\frac{B}{\epsilon} \right)^{d/2}. 
\end{equation}
\end{proof}

We are now ready to give the proof of Lemma \ref{GrUB}.

\begin{proof}[Proof of Lemma \ref{GrUB}]
    Throughout we take $B$ and $L$ to be constants depending on
    dimension alone (so we can absorb them into a generic
    $C_d$). Lemma \ref{GrUB} has two parts: \eqref{GrUB.eq1} and
    \eqref{GrUB.eq2}. We prove \eqref{GrUB.eq2} first and then
    indicate the changes necessary for  \eqref{GrUB.eq1}. So assume
    first that $\Omega$ is a polytope with facets bounded by a
    constant depending on $d$ alone and take $\F = \C^B(\Omega)$. 

With $\ball_{\disdes}^{\F}(f, t) := \left\{g \in \F : 
  \ell_{\disdes}(f, g) \leq t \right\}$ and $\ball_{\empdes}^{\F}(f,
t) := \left\{g \in \F : \ell_{\empdes}(f, g) \leq t \right\}$, Lemma 
\ref{TwoLosses} gives  
\begin{equation*}
\ball^{\boucon}_{\empdes}(\tilde{f}_k, t) \subseteq
  \ball^{\boucon}_{\disdes}(\tilde{f}_k, 2t + C_d n^{-2/(d+4)})
\end{equation*}
with probability at least
\begin{equation}\label{pro1}
  1 - C \exp \left(-\frac{n^{d/(d+4)}}{C_d} \right).
\end{equation}
Thus for
\begin{equation}\label{tconr.1}
t \geq C_d n^{-2/(d+4)}, 
\end{equation}
we get
\begin{equation*}
  \ball^{\boucon}_{\empdes}(\tilde{f}_k, t) \subseteq
  \ball^{\boucon}_{\disdes}(\tilde{f}_k, 3t),
\end{equation*}
and consequently  $G_{\tilde{f}_k}(t, \C^B(\Omega)) \leq
\mathfrak{G}_{\tilde{f}_k}(3t)$ where 
\begin{equation*}
 \mathfrak{G}_{\tilde{f}_k}(t) := \E \left[ \sup_{g \in
       \ball^{\boucon}_{\disdes}(\tilde{f}_k, t)} \frac{1}{n}
  \sum_{i=1}^n \xi_i \left(g(X_i) - \tilde{f}_k(X_i) \right) \bigg| X_1, \dots,
  X_n \right]
\end{equation*}
holds with probability at least \eqref{pro1}. By concentration of
measure, the above conditional expectation will be close to the
corresponding unconditional expectation because
\begin{align*}
 & T(x_1, \dots, x_n) \\ &:= \E \left[ \sup_{g \in
       \ball^{\boucon}_{\disdes}(\tilde{f}_k, 3t)} \frac{1}{n}
  \sum_{i=1}^n \xi_i \left(g(X_i) - \tilde{f}_k(X_i) \right) \bigg|
  X_1 = x_1, \dots,
  X_n  = x_n\right]
\end{align*}
satisfies the bounded differences condition:
\begin{align*}
 \left| T(x_1, \dots, x_n) - T(x_1', \dots, x_n') \right| \leq
  \frac{2B \sigma}{n} \sum_{i=1}^n I \{x_i \neq x_i'\}
\end{align*}
and the bounded differences concentration inequality consequently
gives
\begin{align}\label{bdc1}
  \P\left\{\mathfrak{G}_{\tilde{f}_k}(3t) \leq \E \mathfrak{G}_{\tilde{f}_k}(3t) + x
  \right\} \geq 1 - \exp \left(\frac{-nx^2}{2B^2 \sigma^2} \right)
\end{align}
for every $x > 0$. We next control
\begin{equation*}
\E \mathfrak{G}_{\tilde{f}_k}(3t) = \E \left[ \sup_{g \in
       \ball^{\boucon}_{\disdes}(\tilde{f}_k, 3t)} \frac{1}{n}
  \sum_{i=1}^n \xi_i \left(g(X_i) - \tilde{f}_k(X_i) \right) \right]
\end{equation*}
where the expectation on the left hand side is with respect to $X_1,
\dots, X_n$  while the expectation on the right hand side is with
respect to all variables $\xi_1, \dots, \xi_n, X_1, \dots, X_n$. Clearly
\begin{equation}\label{cov1}
  \E \mathfrak{G}_{\tilde{f}_k}(3t) = \E \sup_{h \in \mathcal{H}} \left(\mathbb{Q}_n h
  -\mathbb{Q} h \right)
\end{equation}
where $\mathcal{H}$ consists of all functions of the form $(\xi, x)
\mapsto \xi \left(g(x) - \tilde{f}_k(x) \right)$ as $g$ varies over
$\ball^{\boucon}_{\disdes}(\tilde{f}_k, 3t)$, $\mathbb{Q}_n$
is the empirical measure corresponding to $(\xi_i, X_i), i = 1, \dots,
n$, and $\mathbb{Q}$ is the distribution of $(\xi, X)$ where $\xi$ and
$X$ are independent with $\xi \sim N(0, \sigma^2)$ and $X \sim \P$.

We now use the bound \eqref{expbo} which requires us to control
$N_{[\,]}(\epsilon, \mathcal{H}, L_2(\Q))$. This is done by Theorem
\ref{Thm:Bddcube} which states that
\begin{equation}\label{metbob}
\log N_{[\,]}(\epsilon, \ball^{\boucon}_{\disdes}(\tilde{f}_k, t), L_2(\P))
\leq C_d k \left(\log \frac{C_d B}{\epsilon} \right)^{d+1}
\left(\frac{t}{\epsilon} \right)^{d/2}.
\end{equation}
Theorem \ref{Thm:Bddcube} is stated under the unnormalized
integral constraint $\int_{\Omega} (f - \tilde{f}_k)^2 \leq t^2$ and
for bracketing numbers under the unnormalized Lebesgue measure but
this implies \eqref{metbob}  as the volume of $\Omega$ is assumed to
be bounded on both sides by dimensional constants. We now claim that
\begin{equation}\label{qpbra}
  N_{[\,]}(\epsilon, \mathcal{H}, L_2(\Q)) \leq N_{[\,]}(\epsilon \sigma^{-1},
  \ball^{\boucon}_{\disdes}(\tilde{f}_k, 3t), L_2(\P)).
\end{equation}
Inequality \eqref{qpbra} is true because of the following. Let $\{[g_L,
g_U], g \in G\}$ be a set of covering brackets for the set
$\ball^{\boucon}_{\disdes}(\tilde{f}_k, 3t)$. For each bracket $[g_L,
g_U]$, we associate a corresponding bracket $[h_L, h_U]$ for
$\mathcal{H}$ as follows:
\begin{equation*}
  h_L(\xi, x) := \xi \left(g_L(x) - \tilde{f}_k(x) \right) I\{\xi \geq
  0\} + \xi \left(g_U(x) - \tilde{f}_k(x) \right) I\{\xi < 0\}
\end{equation*}
and
\begin{equation*}
  h_U(\xi, x) := \xi \left(g_U(x) - \tilde{f}_k(x)\right) I\{\xi \geq
  0\} + \xi \left(g_L(x) - \tilde{f}_k(x) \right)I\{\xi < 0\}.
\end{equation*}
It is now easy to check that whenever $g_L \leq g \leq g_U$, we have
$h_L \leq h_g \leq h_U$ where $h_g(\xi, x) = \xi \left(g(x) -
  \tilde{f}_k(x) \right)$. Further, $h_U - h_L = |\xi| \left(g_U - g_L
\right)$ and thus $\Q \left(h_U - h_L \right)^2 = \sigma^2 \P
\left(g_U - g_L \right)^2$ which proves \eqref{qpbra}. Inequality
\eqref{metbob} then gives that for every $a \geq B/\sqrt{n}$, we have
\begin{align*}
& \int_a^{B} \sqrt{\log N_{[\,]}(u, \mathcal{H}, L_2(\Q))} du \\ &\leq  C_d \sqrt{k}
                                                         \int_a^B
                                                         \left(\log
                                                         \frac{C_d B
                                                         \sigma}{u} \right)^{(d+1)/2}
                                                         \left(\frac{t
                                                         \sigma}{u}
                                                         \right)^{d/4}
                                                         du \\
  &\leq C_d \sqrt{k} (t \sigma)^{d/4} \left(\log
                                                         \frac{C_d B
                                                         \sigma}{a}
    \right)^{(d+1)/2} \int_a^{\infty} u^{-d/4} du \\
  &\leq C_d \sqrt{k} (t \sigma)^{d/4} \left(\log
                                                         \frac{C_d B
                                                         \sigma}{a}
    \right)^{(d+1)/2} a^{1-(d/4)} \\
  &\leq C_d \sqrt{k} (t \sigma)^{d/4} \left(\log (C_d \sigma
    \sqrt{n})\right)^{(d+1)/2} a^{1-(d/4)}
\end{align*}
where, in the last inequality, we used $a \geq B/\sqrt{n}$. The
inequality $$a \geq Cn^{-1/2} \int_a^B \sqrt{\log N_{[\,]}(u,
  \mathcal{H}, L_2(\Q))} du$$ will therefore be satisfied for
\begin{align*}
  a \geq C_d t \sigma \left(\frac{k}{n} \right)^{2/d} \left(\log (C_d \sigma
    \sqrt{n})\right)^{2(d+1)/d}
\end{align*}
for an appropriate constant $C_d$. The bound \eqref{expbo} then gives
\begin{align*}
   \E \mathfrak{G}_{\tilde{f}_k}(3t) \leq C_d t \sigma
  \left(\frac{k}{n} \right)^{2/d} \left(\log (C_d \sigma
  \sqrt{n})\right)^{2(d+1)/d}  + \frac{C_d}{\sqrt{n}}.
\end{align*}
Combining the above steps, we deduce that for every $x > 0$,
the inequality
\begin{align*}
  G_{\tilde{f}_k}(t, \C^B(\Omega)) \leq                                            C_d t \sigma
                                                 \left(\frac{k}{n}
                                                 \right)^{2/d}
                                                 \left(\log (C_d
                                                 \sigma
                                                 \sqrt{n})\right)^{2(d+1)/d}
                                                 +
                                                 \frac{C_d}{\sqrt{n}}
                                                 + x 
\end{align*}
holds with probability at least
\begin{align*}
  1 - C \exp \left(-\frac{n^{d/(d+4)}}{C_d} \right) - \exp
  \left(\frac{-nx^2}{C_d \sigma^2} \right)
\end{align*}
for every fixed $t$ satisfying \eqref{tconr.1}. Inequality \eqref{GrUB.eq2} is now deduced by taking
\begin{equation*}
  x = C_d t \sigma \left(\frac{k}{n} \right)^{2/d} \left(\log (C_d \sigma
    \sqrt{n})\right)^{2(d+1)/d}
\end{equation*}

Let us now get to \eqref{GrUB.eq1}. Here $\Omega$ is not necessarily a
polytope and $\F$ is either $\C_L^L(\Omega)$ or $\C_L(\Omega)$. Let us
first argue that it is enough to prove \eqref{GrUB.eq1} when $\F =
\C_L^{4L}(\Omega)$. This will obviously imply the same inequality for
the smaller class $\C_L^L(\Omega)$. It also implies the same
inequality for the larger class $\C_L(\Omega)$ because of the
following claim:
\begin{align}\label{lipbo}
  f \in \C_L(\Omega), f \notin \C_L^{4L}(\Omega) \implies \min
  \left(\ell_{\P_n}(f, \tilde{f}_k), \ell_{\P}(f, \tilde{f}_k)
  \right) > L.
\end{align}
The above claim immediately implies that
\begin{equation*}
  \ball_{\P_n}^{\C_{L}(\Omega)}(\tilde{f}_k, t) =
  \ball_{\P_n}^{\C^{4L}_{L}(\Omega)}(\tilde{f}_k, t) \qt{for all $t \leq
    L$}.
\end{equation*}
which leads to
\begin{equation*}
  G_{\tilde{f}_k}(t, \C_L^{4L}(\Omega)) =   G_{\tilde{f}_k}(t,
  \C_L(\Omega))  \qt{for all $t \le L$}. 
\end{equation*}
To see \eqref{lipbo}, note that assumptions $f \in \C_L(\Omega)$ and 
$f \notin \C_L^{4L}(\Omega)$ together imply that $|f(x)| > 4L$ for some
$x \in \Omega$. By the Lipschitz property of $f$, the fact that
$\Omega$ has diameter $\leq 2$ and the fact that $\tilde{f}_k$ is
bounded by $L$, we have
\begin{align*}
  |f(y) - \tilde{f}_k(y)| &= |f(y) - f(x) + f(x) - \tilde{f}_k(y)| \\
                          &\geq |f(x)| - |f(y) - f(x)| - |\tilde{f}_k(y)| \\
  &> 4 L - L \|x - y\| - L \geq L
\end{align*}
for every $y \in \Omega$. This clearly implies that both
$\ell_{\P_n}(f, \tilde{f}_k)$ and $\ell_{\P}(f, \tilde{f}_k)$  are
larger than $L$ which proves \eqref{lipbo}.

In the rest of the proof, we therefore assume that $\F =
\C_L^{4L}(\Omega)$. We write
\begin{equation*}
  G_{\tilde{f}_k}(t, \F) \leq G^{I}_{\tilde{f}_k}(t, \F)  +
  G^{II}_{\tilde{f}_k}(t, \F)
\end{equation*}
where 
\begin{equation*}
     G^I_{\tilde{f}_k}(t, \F) := \E \left[ \sup_{g \in
         \ball^{\F}_{\empdes}(\tilde{f}_k, t)}
       \frac{1}{n} 
  \sum_{i: X_i \in \cup_{i=1}^m \Delta_i}  \xi_i \left(g(X_i) -
    \tilde{f}_k(X_i) \right) \bigg| X_1, \dots, X_n \right]
\end{equation*}
and
\begin{equation*}
  G^{II}_{\tilde{f}_k}(t, \F) := \E \left[ \sup_{g \in
       \ball^{\F}_{\empdes}(\tilde{f}_k, t)} \frac{1}{n}
  \sum_{i : X_i \notin \cup_{i=1}^m \Delta_i} \xi_i \left(g(X_i) -
    \tilde{f}_k(X_i) \right) \bigg| X_1, \dots,
  X_n \right].
\end{equation*}
Here $\Delta_1, \dots, \Delta_m$ are the $d$-simplices given by Lemma
\ref{ballapp}. We now bound $G^{I}_{\tilde{f}_k}(t, \F)$ and
$G^{II}_{\tilde{f}_k}(t, \F)$ separately. The first term
$G^{I}_{\tilde{f}_k}(t, \F)$ can be shown to satisfy the same bound in
\eqref{GrUB.eq1} using almost the same argument used for
\eqref{GrUB.eq1}. The only difference is that, instead of
\eqref{metbob}, we use: 
\begin{align*}
& N_{[\,]}(\epsilon,
\left\{x\mapsto g(x)I\{x \in \cup_{i=1}^m \Delta_i\} : g \in
  \ball^{\C_L^{4L}(\Omega)}_{\disdes}(\tilde{f}_k, 3t) \right\}, L_2(\P))
\\ &\leq C_d k \left(\log \frac{C_d L}{\epsilon} \right)^{d+1}
\left(\frac{t}{\epsilon} \right)^{d/2}, 
\end{align*}
the above bound also following from Theorem \ref{Thm:Bddcube}. For
$G^{II}_{\tilde{f}_k}(t)$, let $\tilde{n} := \sum_{i=1}^n I\{X_i
\notin \cup_{i=1}^m \Delta_i\}$ and use Dudley's bound (Theorem
\ref{dud}) and \eqref{bronweb} to write 
\begin{align*}
  G^{II}_{\tilde{f}_k}(t, \F) &= \frac{\tilde n}{n}\E \left[ \sup_{g \in
       \ball^{\F}_{\empdes}(\tilde{f}_k, t)}
                            \frac{1}{\tilde n}
  \sum_{i : X_i \notin \cup_{i=1}^m \Delta_i} \xi_i \left(g(X_i) - \tilde{f}_k(X_i) \right) \bigg| X_1, \dots,
                            X_n \right] \\
  &\leq \frac{\sigma \tilde{n}}{n} \inf_{\delta > 0}
    \left(\frac{12}{\sqrt{\tilde{n}}} \int_{\delta}^{\infty}
    \sqrt{\log N(\epsilon, \C_L^{4L}(\Omega), \ell_{\infty})}
    d\epsilon + 2 \delta \right) \\
  &\leq C_d \frac{\sigma \tilde{n}}{n} \inf_{\delta > 0}
    \left(\frac{1}{\sqrt{\tilde{n}}} \int_{\delta}^{\infty}
    \left(\frac{L}{\epsilon} \right)^{d/4}
    d\epsilon +  \delta \right).
\end{align*}
The choice $\delta = L (\tilde n)^{-2/d}$ leads to
\begin{align}\label{gf2}
  G^{II}_{\tilde{f}_k}(t, \F) \leq C_d L \frac{\sigma}{n} (\tilde
  n)^{1-(2/d)}.
\end{align}
Note that $\tilde{n}$ is binomially distributed with parameters
$n$ and $\tilde{p} := \text{Vol}(\Omega \setminus
(\cup_{i=1}^m \Delta_i))/\text{Vol}(\Omega)$. Because $(1 - C_d
k^{-2/d}) \Omega \subseteq
\cup_{i=1}^m \Delta_i \subseteq \Omega$ (from Lemma \ref{ballapp}), we have
\begin{align*}
  \tilde{p} \leq 1 - (1 - C_d k^{-1/d})^d \leq d C_d k^{-1/d}
\end{align*}
where we used $(1- u)^d \geq 1- du$. Hoeffding's inequality:
\begin{align*}
  \P \left\{\text{Bin}(n, p) \le np + u \right\} \geq 1 - \exp
  \left(\frac{-u^2}{2n} \right) \qt{for every $u \geq 0$}
\end{align*}
gives (below $C_d$ is such that $\tilde p \leq C_d k^{-1/d}$)
\begin{align*}
  \P \left\{\tilde n \leq 2 C_d n k^{-1/d} \right\} \geq \P
  \left\{\tilde{n} - n \tilde{p} \leq C_d n k^{-1/d} \right\} \geq 1 -
  \exp \left(-\frac{C_d^2}{2} n k^{-2/d} \right).
\end{align*}
Combining the above with \eqref{gf2}, we obtain
\begin{align*}
  G^{II}_{\tilde{f}_k}(t, \F) \leq C_d L \sigma n^{-2/d} k^{-1/d} k^{2/d^2}
\end{align*}
with probability at least $1 - \exp(-C_dn k^{-2/d})$. The proof of
\eqref{GrUB.eq1} is now completed by combining the obtained bounds for
$G^{I}_{\tilde{f}_k}(t, \F)$ and $G^{II}_{\tilde{f}_k}(t, \F)$ 
\end{proof}

\subsection{Proof of Lemma \ref{GrLB}}
Lemma \ref{GrLB} is proved below using Lemma \ref{lbf0.ran} (proved in
Subsection \ref{adproofs}) and the
following standard result (Sudakov Minoration) from the theory of
Gaussian processes (see e.g., \cite[Theorem
3.18]{LedouxTalagrand91book}):

\begin{lemma}[Sudakov minoration]\label{sud}
  In our random design setting (with $\xi_1, \dots, \xi_n
  \overset{\text{i.i.d}}{\sim} N(0, \sigma^2)$), the following is true
  for every class $\F$ and $t \geq 0$:  
   \begin{equation}\label{sudr}
  \begin{split}
&   \E \left[ \sup_{\{g \in \F : \ell_{\P_n}(g, f) \leq t \}} \frac{1}{n} \sum_{i=1}^n \xi_i
      \left(g(X_i) - f(X_i) \right) \bigg| X_1, \dots, X_n \right] \\ &\geq  \frac{\beta
        \sigma}{\sqrt{n}} \sup_{\epsilon > 0} \left\{\epsilon
        \sqrt{\log N(\epsilon, \{g \in \F : \ell_{\P_n}(g, f) \leq t
                                         \}, \ell_{\P_n})}
      \right\},
      \end{split}
    \end{equation}
    where $\beta$ is a universal positive constant. 
\end{lemma}

\begin{proof}[Proof of Lemma \ref{GrLB}]
As in the proof of Lemma \ref{GrUB}, we use the notation:
$\B_{\P_n}^{\F}(f, t) = \{g \in \F : 
\ell_{\P_n}(g, f) \leq t\}$. By triangle inequality,   
\begin{equation*}
   \ball^{\F}_{\empdes}(\tilde{f}_k, t) \supseteq
   \ball^{\F}_{\empdes}(f_0, t/2)  \qt{whenever $\metemp(f_0,
     \tilde{f}_k) \leq t/2$}.
 \end{equation*}
From Lemma \ref{ballapp},  $\metemp(f_0, \tilde{f}_k) \leq \sup_{x \in
  \Omega} |f_0(x) - \tilde{f}_k(x)| \leq C_d k^{-2/d}$, and so
$\metemp(f_0,  \tilde{f}_k) \leq t/2$  will be satisfied whenever $t
\geq 2 C_d k^{-2/d}$. For such $t$, by Sudakov minoration (Lemma
\ref{sud}): 
\begin{align*}
G_{\tilde{f}_k}(t, \F) &\geq  \E \left[ \sup_{g \in
       \ball^{\F}_{\empdes}(f_0, t/2)} \frac{1}{n}
  \sum_{i=1}^n \xi_i \left(g(X_i) - \tilde{f}_k(X_i) \right) \bigg| X_1, \dots,
                     X_n \right]  \\
  &= \E \left[ \sup_{g \in
       \ball^{\F}_{\empdes}(f_0, t/2)} \frac{1}{n}
  \sum_{i=1}^n \xi_i \left(g(X_i) - f_0(X_i) \right) \bigg| X_1, \dots,
                     X_n \right] \\
&\geq \frac{\beta \sigma}{\sqrt{n}} \sup_{\epsilon
  > 0} \left\{\epsilon \sqrt{\log N(\epsilon,
                       \ball^{\F}_{\empdes}(f_0, t/2), \empdes)}
  \right\}. 
\end{align*}
We now use Lemma \ref{lbf0.ran} which applies to the function class 
$\C_L^L(\Omega)$ and, consequently, also to the larger classes
$\C_L(\Omega)$, $\C^B_L(\Omega)$ and $\C^B(\Omega)$ for $B \geq
L$. Applying Lemma \ref{lbf0.ran} with  $\epsilon = c_3 n^{-2/d}$, we
get that for each fixed $t \geq 16 c_3 n^{-2/d}$, the inequality
\begin{equation*}
  G_{\tilde{f}_k}(t, \F) \geq \beta \sqrt{c_1} \sigma c_3^{1-(d/4)} n^{-2/d}
\end{equation*}
holds with probability at least $1 - \exp(-c_2 n)$. This completes the
proof of Lemma \ref{GrLB} (note that, if the constant $C_d$ is
enlarged suitably, the earlier condition $t \geq 2 C_d k^{-2/d}$
implies $t \geq 16 c_3 n^{-2/d}$ because $k \leq n$). 
\end{proof}

\subsection{Proof of Lemma \ref{tfktildelb}} \label{tfloboran}
Lemma \ref{tfktildelb} is proved below using Lemmas \ref{GrUB} and
\ref{GrLB}.

\begin{proof}[Proof of Lemma \ref{tfktildelb}]
Taking $t = t_0 := \sqrt{c_d} n^{-1/d} \sqrt{\sigma}$ in Lemma
\ref{GrLB}, we obtain
\begin{equation}\label{flb}
  \P \left\{H_{\tilde{f}_k}(t_0, \F) \geq \frac{c_d}{2} \sigma
    n^{-2/d}  \right\} \geq 1 - \exp(-c_d n). 
\end{equation}
The condition $t_0 \geq C_d k^{-2/d}$ required for the application of
Lemma \ref{GrLB} places the following restriction on $k$:
\begin{equation}\label{resk1}
  k \geq \left(\frac{C_d}{\sqrt{c_d}} \right)^{d/2} \sigma^{-d/4}
  \sqrt{n}. 
\end{equation}
Now consider the upper bound on $H_{\tilde{f}_k}(t, \F) \leq
G_{\tilde{f}_k}(t, \F)$ given in Lemma \ref{GrUB}. The leading term in
this upper bound is the first term: 
\begin{equation}\label{ublead}
  C_d t \sigma \left(\frac{k}{n} \right)^{2/d} (\log(C_d \sigma
  \sqrt{n}))^{2(d+1)/d} 
\end{equation}
as long as
\begin{equation}\label{tlboo}
  t \geq \max \left(n^{(4-d)/(2d)} k^{-2/d} \sigma^{-1},
    k^{(2-3d)/d^2} \right). 
\end{equation}
We now choose $t$ so that \eqref{ublead} matches the lower bound on
$H_{\tilde{f}_k}(t_0, \F)$ from \eqref{flb}:
\begin{equation*}
  C_d t \sigma \left(\frac{k}{n} \right)^{2/d} (\log(C_d \sigma
  \sqrt{n}))^{2(d+1)/d} = \frac{c_d}{2} \sigma n^{-2/d}
\end{equation*}
leading to
\begin{equation*}
  t = t_1 := \frac{c_d}{2 C_d} k^{-2/d} (\log(C_d \sigma
  \sqrt{n}))^{-2(d+1)/d}.
\end{equation*}
Check that this choice of $t = t_1$ satisfies \eqref{tlboo} provided
\begin{equation}\label{resn}
  n \geq \left(\frac{c_d \sigma}{2C_d} \right)^{2d/(d-4)} (\log(C_d
  \sigma \sqrt{n}))^{4(d+1)/(d-4)}
\end{equation}
and
\begin{equation}\label{resk2}
  k \geq \left(\frac{2C_d}{c_d} \right)^{d^2/(d-2)} (\log(C_d \sigma
  \sqrt{n}))^{2d(d+1)/(d-2)}. 
\end{equation}
We have thus proved $H_{\tilde{f}_k}(t_1, \F) \leq H_{\tilde{f}_k}(t_0,
\F)$. Check that $t_1 < t_0$ provided 
\begin{equation}\label{resk3}
  k > (2 \sqrt{c_d} C_d)^{d/2}(\log(C_d \sigma
  \sqrt{n}))^{-(d+1)}  \sigma^{-d/4} \sqrt{n}. 
\end{equation}
As can now be directly checked, the choice $k = \gamma_d \sqrt{n}
\sigma^{-d/4}$ for an appropriate $\gamma_d$ and the condition $n \geq
N_{d, \sigma}$ for an appropriate $N_{d, \sigma}$ satisfy all the four
conditions \eqref{resk1}, \eqref{resk2}, \eqref{resk3} and
\eqref{resn}. Further the probability with which all the above bounds
hold is bounded from below by $1 - C_d
\exp(-n^{(d-4)/d})/C_d^2$. Inequality \eqref{tflb} in Theorem
\ref{chat} now implies that $t_{\tilde{f}_k}(\F) \geq t_1$. The
logarithmic term $\log(C_d \sigma \sqrt{n})$ in $t_1$ can be further
simplified to $\log n$ because of $n \geq N_{d, \sigma}$. This
completes the proof of Lemma \ref{tfktildelb}.  
\end{proof}

\subsection{Completion of the proof of Theorem
  \ref{implb.blse}}\label{comple.implb.blse} 
We now complete the proof of Theorem \ref{implb.blse} from Lemma
\ref{tfktildelb}.
\begin{proof}[Proof of Theorem \ref{implb.blse}]
We shall lower bound $\E_{\tilde{f}_k} \ell^2_{\P}(\hat{f}_n(\F),
\tilde{f}_k)$. Let $$\rho_n := c_d n^{-1/d} \sqrt{\sigma} (\log
    n)^{-2(d+1)/d}$$ be the lower bound on $t_{\tilde{f}_k}(\F)$ given
    by Lemma \ref{tfktildelb}. As a result,
\begin{align*}
&  \P \left\{ \ell^2_{\P_n}(\hat{f}_n(\F), \tilde{f}_k) \geq
    \frac{1}{2} \rho_n^2 \right\} \\ &\geq   \P \left\{
                                       \ell^2_{\P_n}(\hat{f}_n(\F),
                                       \tilde{f}_k) \geq 
                                       \frac{1}{2} t^2_{\tilde{f}_k}(\F), t_{\tilde{f}_k}(
                                       \F) \geq \rho_n
                                    \right\} \\
  &= \E \left[I \left\{t_{\tilde{f}_k}(\F) \geq \rho_n \right\} \P
    \left\{ \ell^2_{\P_n}(\hat{f}_n(\F), \tilde{f}_k) \geq 
    \frac{1}{2} t^2_{\tilde{f}_k}(\F) \bigg| X_1, \dots, X_n \right\}
    \right] \\
  &\geq \E \left[I \left\{t_{\tilde{f}_k}(\F) \geq \rho_n \right\} \left(
    1 - 6 \exp \left(\frac{-c n t^2_{\tilde{f}_k}(\F)}{\sigma^2} \right)
    \right) \right] \\
  &\geq \left(
    1 - 6 \exp \left(\frac{-c n \rho_n^2}{\sigma^2} \right) \right) \P
    \left\{t_{\tilde{f}_k}(\F) \geq \rho_n \right\}.
\end{align*}
where we used \eqref{charan} in the penultimate inequality. Lemma
\ref{tfktildelb} now gives
\begin{align*}
&   \P \left\{ \ell^2_{\P_n}(\hat{f}_n(\F), \tilde{f}_k) \geq
    \frac{1}{2} \rho_n^2 \right\} \\ &\geq \left(
    1 - 6 \exp \left(\frac{-c n \rho_n^2}{\sigma^2} \right) \right)
  \left(1 - C_d \exp
                                    \left(\frac{-n^{(d-4)/d}}{C_d^2} \right) \right) \\
  &\geq 1 - 6 \exp \left(\frac{-c n \rho_n^2}{\sigma^2} \right)  -  C_d \exp
                                    \left(\frac{-n^{(d-4)/d}}{C_d^2}
    \right).
\end{align*}
Clearly if $N_{d, \sigma}$ is chosen appropriately, then, for $n \geq
N_{d, \sigma}$,
\begin{equation*}
  \frac{n \rho_n^2}{\sigma^2} = \frac{c_d^2}{\sigma} n^{(d-2)/d} (\log
  n)^{-4(d+1)/d}
\end{equation*}
will be larger than any constant multiple of $n^{(d-4)/d}$ which gives
\begin{equation}\label{chap.b}
  \P \left\{ \ell^2_{\P_n}(\hat{f}_n(\F), \tilde{f}_k) \geq
    \frac{1}{2} \rho_n^2 \right\}  \geq 1 - C_d \exp
                                    \left(\frac{-n^{(d-4)/d}}{C_d^2}\right).
\end{equation}
We now argue that a similar inequality also holds for
$\ell^2_{\P}(\hat{f}_n(\F), \tilde{f}_k)$. Here it is easiest to break
the argument into the different choices of $\F$. First assume $\F =
\C^{B}(\Omega)$. Combining the above inequality with Lemma
\ref{TwoLosses}, we obtain
\begin{align*}
 & \P \left\{\ell_{\P}(\hat{f}_n(\boucon), \tilde{f}_k) \geq \frac{1}{2}
  \frac{\rho_n}{\sqrt{2}} - \frac{1}{2} C_d n^{-2/(d+4)} B^{d/(d+4)}
  \right\} \\ &\geq 1 - C_d \exp
                                    \left(\frac{-n^{(d-4)/d}}{C_d^2}\right)
                - C \exp \left(-\frac{n^{d/(d+4)}}{C_d B^{8/(d+4)}}
                \right) 
\end{align*}
Because $n^{-2/(d+4)}$ is of a smaller order than $n^{-1/d}$ and
$n^{d/(d+4)}$ is of a larger order than $n^{(d-4)/d}$ (and $B$ is a
dimensional constant), we obtain
\begin{equation*}
  \begin{split}
&  \P \left\{\ell_{\P}(\hat{f}_n(\boucon), \tilde{f}_k) \geq
  c_d n^{-1/d} \sqrt{\sigma} (\log n)^{-2(d+1)/d} \right\} \\ &\geq 1 - C_d \exp
                                    \left(\frac{-n^{(d-4)/d}}{C_d^2}\right).
\end{split}
\end{equation*}
provided $n \geq N_{d, \sigma}$ where $N_{d, \sigma}$ is a constant
depending on $d$ and $\sigma$ alone. Using this, we can further adjust
$N_{d, \sigma}$ so that 
\begin{align*}
  \P \left\{\ell_{\P}(\hat{f}_n(\boucon), \tilde{f}_k) \geq
  c_d n^{-1/d} \sqrt{\sigma} (\log n)^{-2(d+1)/d} \right\} \geq \frac{1}{2}
\end{align*}
which immediately gives that
\begin{align}\label{ana2}
  \E_{\tilde{f}_k} \ell^2_{\P}(\hat{f}_n(\boucon), \tilde{f}_k) \geq \frac{c_d^2}{4} \sigma n^{-2/d}  (\log
  n)^{-4(d+1)/d}
\end{align}
completing the proof of inequality \eqref{implb.blse.eq}. The same
argument also yields \eqref{implb.clse.eq} for $\F =
\C_L^B(\Omega)$. Now take $\F = \C_L(\Omega)$. Let us take $N_{d,
  \sigma}$ large enough 
so that $\rho_n \leq L$ for $n \geq N_{d, \sigma}$. The fact
\eqref{lipbo} then implies that both
$\ell_{\P_n}(\hat{f}(\C_L(\Omega)), \tilde{f}_k)$ and
$\ell_{\P}(\hat{f}(\C_L(\Omega)), \tilde{f}_k)$ trivially exceed
$\rho_n$ if $\hat{f}_n(\C_L(\Omega))$ does not belong to
$\C_L^{4L}(\Omega)$. Therefore while lower bounding these losses, we
can assume that  $\hat{f}_n(\C_L(\Omega)) \in \C_L^{4L}(\Omega)$. This
allows application of Lemma \ref{TwoLosses} again and yields
\eqref{implb.clse.eq}  for $\F = \C_{L}(\Omega)$. The proof of Theorem
\ref{implb.blse} is complete. 
\end{proof}

\subsection{Proof of Theorem \ref{Thm:Bddcube}}\label{Gao:subsection}
It is enough to prove Theorem
\ref{Thm:Bddcube} when $\Omega$ is exactly equal to $\cup_{i=1}^k
\Delta_i$. So in the proof of Theorem \ref{Thm:Bddcube}, we shall
assume that $\Omega = \cup_{i=1}^k \Delta_i$. The main step in  the
proof is to prove it 
in the special case when $\Omega$ is itself a $d$-simplex and
$\tilde{f}$ is identically zero on $\Omega$. This special case is
stated in the next result (note that simplices can be written in the
form \eqref{omsim} so the next result is applicable when $\Omega$ is a
simplex). 

\begin{lemma}\label{simplex}
  Let $\Omega$ be a convex body contained in the unit ball, and
  of the form 
  \begin{equation}\label{omsim} 
    \Omega=\{x\in \R^d: a_i\le v_i^Tx\le b_i, 1\le i\le d+1\}
  \end{equation}
where $v_i$ are fixed unit vectors. For a fixed $1 \leq p < \infty$
and $t > 0$, let
\begin{equation*}
  B_p^{\Gamma}(0, t, \Omega) := \left\{f \in \C^{\Gamma}(\Omega) :
    \int_{\Omega} |f(x)|^p dx \leq t^p \right\}. 
\end{equation*}
Then for every $0 < \epsilon < \Gamma$, we have
\begin{equation*}
  \log N_{[\,]}(\epsilon, B_p^{\Gamma}(0, t, \Omega), \|\cdot\|_{p,
    \Omega}) \leq C_{d, p}  \left(\log
    \frac{\Gamma}{\epsilon} \right)^{d+1} \left(\frac{t}{\epsilon}
  \right)^{d/2} 
\end{equation*}
\end{lemma}

Below, we first provide the proof of Theorem \ref{Thm:Bddcube}
using Lemma \ref{simplex}, and then provide the proof of Lemma
\ref{simplex}.

\begin{proof}[Proof of Theorem \ref{Thm:Bddcube}]
Without loss of generality, we assume $\Omega =\cup_{i=1}^k
\Delta_i$. For each $f\in B_p^\Gamma(\tilde f, t, \Omega)$ and $1 \leq 
i \leq k$, we define
$t_i(f)$ as the smallest positive integer $t_i$ such
that
\begin{equation}\label{tif1}
  \int_{\Delta_i}|f(x)-\tilde{f}(x)|^pdx\le t_i^pt^p|\Delta_i|
\end{equation}
where $|\Delta_i|$ denotes the volume of $\Delta_i$. Let
$\mathfrak{T}$ denote the collection of all sequences $T := (t_1(f),
\dots, t_k(f))$ as $f$ ranges over $B_p^{\Gamma}(\tilde f, t,
\Omega)$. Because $|f(x) -
f_0(x)|\le 2\Gamma$ for all $x \in \Omega$, we have 
$t_i(f)\le \lceil 2\Gamma/t \rceil$ (here $\lceil x \rceil$ is the
smallest positive integer larger than or equal to $x$). Thus, the
cardinality of $\mathfrak{T}$ is at most $\lceil2\Gamma/t
\rceil^k$. Further as $t_i(f)$ is the smallest positive integer $t_i$
satisying \eqref{tif1}, the inequality will be violated for $t_i - 1$
so that 
\begin{equation*}
(t_i - 1)^p t^p |\Delta_i| \leq  \int_{\Delta_i}|f(x)-\tilde{f}(x)|^pdx
\end{equation*}
for each $i = 1, \dots, k$. Summing these for $i = 1,
\dots, k$, we get
\begin{align*}
  \sum_{i=1}^k (t_i - 1)^p t^p |\Delta_i| &\leq
                                            \sum_{i=1}^k
                                            \int_{\Delta_i}|f(x)-f_0(x)|^pdx
                                            \\ &\le
                                            \int_{\Omega} |f(x) -
                                            f_0(x)|^p dx \leq
                                            t^p,
\end{align*}
which is equivalent to $\sum_{i=1}^k (t_i - 1)^p |\Delta_i| \leq
1$. As a result
\begin{equation}\label{sumti1}
\sum_{i=1}^m t_i^p|\Delta_i|\le 2^{p-1} \sum_{i=1}^m
[(t_i-1)^p+1]|\Delta_i|\le 2^p.  
\end{equation}
For each sequence $T=(t_1, t_2,
\ldots, t_k) \in \mathfrak{T}$, let $\F_T$ be the
collection of all functions $f \in B_p^\Gamma(\tilde f, t, \Omega)$
satisfying
\begin{equation*}
(t_i-1)^pt^p|\Delta_i|\le \int_{\Delta_i}|f(x)-f_0(x)|^pdx\le
  t_i^pt^p|\Delta_i| 
\end{equation*}
for each $i = 1, \dots, k$. For $f \in \F_T$ and $1 \leq i
\leq k$, the restriction of $f- \tilde f$ to $\Delta_i$ belongs to $ B_p^{2
\Gamma}(0, t_it|\Delta_i|^{1/p}, \Delta_i)$ (since $\tilde{f}$ is
linear on each $\Delta_i$). Applying Lemma~\ref{simplex}, we deduce,
for every $0 < \epsilon_i < 2\Gamma$, the existence of a set $\cG_i$ consisting
of no more than $$\exp(C_{d, p}[\log(2\Gamma/\epsilon_i)]^{d+1} 
t_i^{d/2}|\Delta_i|^{d/2p} t^{d/2}\epsilon_i^{-d/2})$$ brackets, such that
for each $f\in \F_T$, there exists a bracket $[g_i, h_i]\in \cG_i$
such that $g_i(x)+f_0(x)\le f(x)\le h_i(x)+f_0(x)$ for all $x\in
\Delta_i$, and $$\int_{\Delta_i}|h_i(x)-g_i(x)|^pdx\le \epsilon_i^p.$$
We now define a bracket $[g, h]$ on $\Omega$ via $g(x)=g_i(x)$ and
$h(x)=h_i(x)$ for $x\in \Delta_i$, $1\le i\le k$. Then,
we clearly have $g(x)\le f(x)\le h(x)$ for all $x\in \Omega$, and
$$\int_{\Omega}|h(x)-g(x)|^pdx=\sum_{i=1}^k
\int_{\Delta_i}|h_i(x)-g_i(x)|^pdx\le
\sum_{i=1}^k\epsilon_i^p.$$ 
We choose
$$\epsilon_i=\max \left(2^{-1-2/p}t_i|\Delta_i|^{1/p} \epsilon,(4k)^{-1/p}
\epsilon \right),$$
so that 
 $$\sum_{i=1}^k\epsilon_i^p\le
 \left(\frac{\epsilon^p}{2^{p+2}}\sum_{i=1}^k
   t_i^p|\Delta_i|+\frac14\epsilon^p\right)\le \frac{\epsilon^p}{4},$$
 where we used \eqref{sumti1}. Thus, $[g,h]$ is an $\epsilon$-bracket.

Note that for each fixed $T$, the total number of brackets $[g, h]$ is at most
\begin{align*}
N:=&\prod_{i=1}^k\exp\left(C_{d, p}[\log(\Gamma/\epsilon_i)]^{d+1}
     t_i^{d/2}|\Delta_i|^{d/2p} t^{d/2}\epsilon_i^{-d/2}\right)\\ 
\le & \prod_{i=1}^k\exp\left(C_{d, p}\left[\frac1p\log (4k)+\log
      \Gamma+\log(1/\epsilon) \right]^{d+1}
      \left(\frac{2^{1+2/p}t}{\epsilon}\right)^{d/2}\right)\\ 
\le &\exp\left(C'_{d, p} k[\log k+\log
      \Gamma+\log(1/\epsilon)]^{d+1}\left( \frac{t}{\epsilon} \right)^{d/2}\right)
\end{align*}
Combining with the number of choices $\leq \lceil 2\Gamma/t \rceil^k$
of the sequences $T = (t_1, \dots, t_k)$, the number of realizations
of the brackets $[g,h]$ is at most $$\lceil 2\Gamma/t \rceil^k\cdot
N\le \exp\left(C''_{d, p}k[\log k+\log
  \Gamma+\log(1/\epsilon)]^{d+1}(t/\epsilon)^{d/2}\right),$$ which can
be shown to yield inequality \eqref{Thm:Bddcube.eq}.  
\end{proof}

We now prove Lemma \ref{simplex}. The main ingredient in this proof is
the result below whose proof directly follows from arguments in
\cite{gao2017entropy}. 

\begin{lemma}\label{Corr:shrink}
  Suppose $\Omega$ has volume at most 1 and is of the form:
 \begin{equation*}
    \Omega=\{x\in \R^d: a_i\le v_i^Tx\le b_i, 1\le i\le d+1\}
  \end{equation*}
  where $v_i$ are fixed unit vectors. For a fixed $0 < \eta < 1/5$,
  let
  \begin{equation*}
    \Omega_0:=\{x\in \R^d: a_i+\eta(b_i-a_i)\le v_i^Tx\le
    b_i-\eta(b_i-a_i), 1\le i\le d+1\}. 
  \end{equation*}
 For $1 \leq p < \infty$ and $t >
  0$, let 
    \begin{equation*}
      C_p(\Omega, t) := \left\{f \in \C(\Omega) : \int_{\Omega}
        |f(x)|^p dx \leq t^p  \right\}. 
    \end{equation*}
    Then for every $\epsilon > 0$, we have
    \begin{equation*}
      \log N_{[\,]}(\epsilon, C_{p}(\Omega, t), \|\cdot\|_{p,
        \Omega_0}) \leq C_{d, p, \eta} \left(\frac{t}{\epsilon}
      \right)^{d/2}.  
    \end{equation*}  
  \end{lemma}
The main idea behind Lemma \ref{Corr:shrink} is that on $\Omega_0$,
each function in $C_p(\Omega, t)$ is uniformly bounded by a
constant multiple of $t$. The proof of Lemma \ref{Corr:shrink} follows
then from the bracketing entropy result for uniformly bounded convex
functions (details can be found in \cite{gao2017entropy}). 

In the proof of Lemma \ref{simplex}, we use the following
notation. For $\Omega := \{x\in \R^d: a_i\le 
v_i^Tx\le b_i, 1\le i\le d+1\}$, where $v_i$ are fixed unit vectors
and $0 \leq r \leq d+1$, let
\begin{equation*}
  \begin{split}
T_r(\Omega)&=\left\{x\in \Omega : a_{i}\le v_i^T x\le b_i \ {\rm
  for  }\  1 \le i\le r, {\rm and } \right. \\ & \left. a_j+\eta (b_j-a_j) \le v_j^T x\le b_j-\eta(b_j-a_j) \
{\rm   for } \ r<j\le d+1\right\}.    
  \end{split}
\end{equation*}
Observe that for $r = 0$, the set $T_0(\Omega)$
coincides with $\Omega_0$ in Lemma \ref{Corr:shrink}. Further
$T_{d+1}(\Omega) = \Omega$. 

\begin{proof}[Proof of Lemma \ref{simplex}]
Fix $0<\eta<1/5$. We shall prove the following by induction on $r$:
There exist two constants $C_1(d,p)$ and $C_2(d,p)$ such that
\begin{equation}\label{indsim}
\log N_{[\,]}(\epsilon, B^{\Gamma}_p(0, t, \Omega),
        \|\cdot\|_{p, T_r(\Omega)})\le C_1[C_2\log(\Gamma/\epsilon)]^r
        t^{d/2}\eps^{-d/2}  
\end{equation}
for every $r=0, 1,\ldots,d+1$, and for every $\Omega$ of the form
\eqref{omsim}. As mentioned above, for $r = 0$, the 
set $T_0(\Omega)$ equals the set $\Omega_0$ in Lemma \ref{Corr:shrink}
and, as a result, \eqref{indsim} for $r = 0$ follows directly from
Lemma \ref{Corr:shrink}. Let us now assume that \eqref{indsim} is true
for $r=k-1$ and proceed to prove it for $r = k$. Define
	$$K_0=T_{k-1}(\Omega)=\{x\in T_k(\Omega): a_k+\eta(b_k-a_k)\le
        v_k^Tx\le b_k-\eta(b_k-a_k)\}.$$
For a positive integer $m$ (to be determined later) and for
$s=0, 1,2,\ldots, m$, define 
	$$K_{2s+1}=\{x\in T_k(\Omega): a_k+2^{-s-1}\eta(b_k-a_k)\le
        v_k^Tx< a_k+ 2^{-s}\eta(b_k-a_k)\}, $$
	$$K_{2s+2}=\{x\in T_k(\Omega): b_k-2^{-s}\eta(b_k-a_k)<
        v_k^Tx\le b_k-2^{-s-1}\eta(b_k-a_k)\}, $$
	Furthermore, define
	$$K_L=\{x\in T_k(\Omega): a_k\le  v_k^Tx<
        a_k+2^{-m-1}\eta(b_k-a_k)\}, $$
	$$K_R=\{x\in T_k(\Omega): b_k-2^{-m-1}\eta(b_k-a_k)< v_k^Tx\le
        b_k\}. $$
        Then, the sets $K_1, K_2, K_3, K_4, \dots, K_{2m+1}, K_{2m+2}$ together
        with $K_0, K_L, K_R$ form  a partition of $T_k(\Omega)$. We
        now aim to use the induction step. For this purpose we denote
        the inflated sets
	$$\widehat{K}_{2s+1}=\{x\in \Omega:
        a_k+2^{-s-2}\eta(b_k-a_k)\le v_k^Tx< a_k+3\cdot
        2^{-s-1}\eta(b_k-a_k)\}, $$
	$$\widehat{K}_{2s+2}=\{x\in \Omega: b_k-3\cdot
        2^{-s-1}\eta(b_k-a_k)< v_k^Tx\le
        b_k-2^{-s-2}\eta(b_k-a_k)\}. $$
The key observation now is that $T_{k-1}(\widehat{K}_{2s+1}) \supset
K_{2s+1}$ and $T_{k-1}(\widehat{K}_{2s+2})\supset K_{2s+2}$. To prove
$T_{k-1}(\widehat{K}_{2s+1}) \supset K_{2s+1}$, observe that
$T_{k-1}(\widehat{K}_{2s+1})$ equals  
	\begin{align*}
&\left\{x\in \widehat{K}_{2s+1}: a_i\le
                                      v_i^T x\le b_i \ {\rm  for  }\
                                      i\le k-1, \right.\\
	& \ \ \ \ \ \ a_j+\eta (b_j-a_j)\leq v_j^T x\le
   b_j-\eta(b_j-a_j) \ {\rm   for } \ k+1\le j\le d+1,\\
	& \ \ \ \ \ \  a_k+2^{-s-2}\eta(b_k-a_k)+\eta^2(5\cdot
   2^{-s-2} (b_k-a_k))\le v_k^Tx \\
	& \left.\ \ \ \ \ \ \ \ \ \ \ \ \ \ <a_k+3\cdot
   2^{-s-1}\eta(b_k-a_k)-\eta^2(5\cdot 2^{-s-2} (b_k-a_k))\right\}. 
	\end{align*}
As a result, $T_{k-1}(\widehat{K}_{2s+1}) \supset
K_{2s+1}$ if and only if
\begin{equation*}
  2^{-s-2} \eta(b_k - a_k) + \eta^2 5 \cdot 2^{-s-2} (b_k - a_k) \leq
  2^{-s-1} \eta (b_k - a_k)
\end{equation*}
which is equivalent to $\eta < 1/5$. The claim
$T_{k-1}(\widehat{K}_{2s+2})\supset K_{2s+2}$ follows similarly.

For every $f \in B^{\Gamma}_p(0, t, \Omega)$ and $1 \leq i \leq 2m+2$, 
let $t_i := t_i(f)$ be the smallest positive integer satisfying
\begin{equation}
  \label{tif2}
\int_{\widehat{K}_i}|f(x)|^pdx\le |\widehat{K}_i|t_i^pt^{p} 
\end{equation}
where $|\widehat{K}_i|$ denotes the volume of $\widehat{K}_i$. Because
$|f(x)| \leq \Gamma$ for all $x \in \Omega$, we have $t_i(f) \leq
\lceil \Gamma/t \rceil$. Let $\mathfrak{T}$ denote the collection of
all sequences $T := (t_1(f), \dots, t_{2m+2}(f))$ as $f$ ranges over
$B^{\Gamma}_p(0, t, \Omega)$. Because each $t_i(f) \leq \lceil
\Gamma/t \rceil$, the cardinality of $\mathfrak{T}$ is at most $\lceil
\Gamma/t \rceil^{2m+2}$. Further, because $t_i(f)$ is the smallest
        positive integer $t_i$ satisfying \eqref{tif2}, the inequality
will be violated for $t_i - 1$ so that 
\begin{equation*}
|\widehat{K}_i|(t_i-1)^pt^{p} \leq \int_{\widehat{K}_i}|f(x)|^pdx           
\end{equation*}
for each $i = 1, \dots, 2m+2$. Summing these over $i$, we obtain
\begin{align*}
\sum_{i=1}^{2m+2}  |\widehat{K}_i|(t_i-1)^pt^{p} \leq
  \sum_{i=1}^{2m+2}  \int_{\widehat{K}_i}|f(x)|^pdx         
\end{align*}
We now observe that every point in $\Omega$ is contained in
$\widehat{K}_i$ for at most three different $i$ so that
$\sum_{i=1}^{2m+2} I\{x \in \widehat{K}_i\} \leq 3I\{x \in
\Omega\}$. This gives
\begin{align*}
\sum_{i=1}^{2m+2}  |\widehat{K}_i|(t_i-1)^pt^{p} \leq
  \sum_{i=1}^{2m+2}  \int_{\widehat{K}_i}|f(x)|^pdx \leq 3
  \int_{\Omega} |f(x)|^p dx \leq 3 t^p. 
\end{align*}
In other words, we have proved that every sequence $(t_1, \dots,
t_{2m+2}) \in \mathfrak{T}$ satisfies:
\begin{align}\label{cond}
  \sum_{i=1}^{2m+2}  |\widehat{K}_i|(t_i-1)^p \leq 3. 
\end{align}
One consequence of the above is that
\begin{align}\label{oneconse}
  \sum_{i=1}^{2m+2} t_i^p |\widehat{K}_i| \leq 
        2^{p-1}\sum_{i=0}^{2m+2}[(t_i-1)^p+1]|\widehat{K}_i|\le 3\cdot
  2^p,
\end{align}
where we used convexity ($(a_i + b_i)^p \leq 2^{p-1} (a_i^p + b_i^p)$
with $a_i = t_i-1$ and $b_i = 1$) and the fact that
$\sum_{i=0}^{2m+2}|\widehat{K}_i| \leq 3|\Omega| \leq 3$.

For each $(t_1, t_2, \ldots,
t_{2m+2}) \in \mathfrak{T}$, let $\F_T$ be the class of all functions
$f \in B_p^{\Gamma}(0, t, \Omega)$ additionally satisfying
\begin{align}\label{ftom}
(t_i-1)^pt^p|\widehat{K}_i| \leq\int_{\widehat{K}_i}|f(x)|^pdx\le
        t_i^pt^p|\widehat{K}_i| 
\end{align}
for every $i = 1, \dots, 2m+2$. It is then clear that $B_p^{\Gamma}(0,
t, \Omega) \subseteq \cup_{T \in \mathfrak{T}} \F_T$ which implies
(below $|\mathfrak{T}|$ denotes the cardinality of the finite set
$\mathfrak{T}$) 
\begin{align}
 & \log N_{[\,]}(\epsilon, B_p^\Gamma(0, t, \Omega), \|\cdot\|_{p,
    T_k(\Omega)}) \nonumber \\ &\leq \log \sum_{T \in \mathfrak{T}}
                    N_{[\,]}(\epsilon, \F_T,
                    \|\cdot\|_{p, 
                    T_k(\Omega)}) \nonumber \\
  &\leq \log |\mathfrak{T}| + \max_{T \in \mathfrak{T}} \log
    N_{[\,]}(\epsilon, \F_T, \|\cdot\|_{p,
    T_k(\Omega)}) \nonumber \\
  &\leq (2m + 2) \log \lceil \Gamma/t \rceil + \max_{T \in \mathfrak{T}} \log
    N_{[\,]}(\epsilon, \F_T, \|\cdot\|_{p,
    T_k(\Omega)}). \label{axu1}
\end{align}
We shall now fix $T \in \mathfrak{T}$ and bound $\log
N_{[\,]}(\epsilon, \F_T, \|\cdot\|_{p, T_k(\Omega)})$.  Because $K_0$,
$K_1$, $\dots$, $K_{2m+2}$, $K_L$, $K_R$ form a partition of $T_k(\Omega)$, we
have
\begin{equation}\label{axu2}
  \begin{split}
 &\log N_{[\,]}(\epsilon, \F_T, \|\cdot\|_{p, T_k(\Omega)}) \\
  &\leq \log N_{[\,]}(\epsilon_0, \F_T, \|\cdot\|_{p, K_0}) +
  + \sum_{i=1}^{2m+2} \log N_{[\,]}(\epsilon_i, \F_T, \|\cdot\|_{p,
    K_i}) \\
  &+ \log N_{[\,]}(\epsilon_L, \F_T, \|\cdot\|_{p,
    K_L}) +  \log N_{[\,]}(\epsilon_R, \F_T, \|\cdot\|_{p,
    K_R})
    \end{split}
\end{equation}
provided $\epsilon_0, \epsilon_1, \dots, \epsilon_{2m+2}, \epsilon_L,
\epsilon_R > 0$ satisfy
\begin{align}\label{epscondp}
  \epsilon_0^p + \sum_{i=1}^{2m+2} \epsilon_i^p + \epsilon_L^p +
  \epsilon_R^p \leq \epsilon^p. 
\end{align}
To bound $\log N_{[\,]}(\epsilon_i, \F_T, \|\cdot\|_{p,
  K_i})$ for a fixed $1 \leq i \leq 2m+2$, note first that $K_i
\subseteq T_{k-1}(\widehat{K}_i)$ so that 
\begin{align*}
  \log N_{[\,]}(\epsilon_i, \F_T, \|\cdot\|_{p,
  K_i}) \leq   \log N_{[\,]}(\epsilon_i, \F_T, \|\cdot\|_{p,
  T_{k-1}(\widehat{K}_i)}). 
\end{align*}
The induction hypothesis will be used to control the right hand side
above.  Because of the right side inequality in \eqref{ftom}, the
restriction of each $f \in \F_T$ to the set $\widehat{K}_i$ 
belongs to $B_p^{\Gamma}(0, t_i t |\widehat{K}_i|^{1/p},
\widehat{K}_i)$
and so, by the induction hypothesis, we have 
\begin{align}
  &\log N_{[\,]}(\epsilon_i, \F_T, \|\cdot\|_{p,
  T_{k-1}(\widehat{K}_i)})  \nonumber\\ &\leq   \log N_{[\,]}(\epsilon_i, B_p^{\Gamma}(0, t_i t |\widehat{K}_i|^{1/p},
\widehat{K}_i) , \|\cdot\|_{p, 
                                T_{k-1}(\widehat{K}_i)}) \nonumber \\
  &\leq C_1 \left(C_2 \log \frac{\Gamma}{\epsilon_i} \right)^{k-1}
    |\widehat{K}_i|^{d/(2p)} t^{d/2} \left(\frac{t_i}{\epsilon_i}
    \right)^{d/2}. \label{crubo}
\end{align}
For $K_0 = T_{k-1}(\Omega)$, we use the induction hypothesis again to
obtain
\begin{align}
 & \log N_{[\,]}(\epsilon_0, \F_T, \|\cdot\|_{p, K_0}) \nonumber \\ &\leq  \log
                                                            N_{[\,]}(\epsilon_0,
                                                            B_p^{\Gamma}(0,
                                                            t, \Omega),
                                                            \|\cdot\|_{p,
                                                            T_{k-1}(\Omega)})
\nonumber  \\
  &\leq C_1 \left[C_2 \log \frac{\Gamma}{\epsilon_0} \right]^{k-1}
    \left(\frac{t}{\epsilon_0} \right)^{d/2}. \label{crubo1}
\end{align}
On the sets $K_L$ and $K_R$, we set $\log N_{[\,]}(\epsilon_L, \F_T, \|\cdot\|_{p, K_L})$
and  $\log N_{[\,]}(\epsilon_R, \F_T, \|\cdot\|_{p, K_R})$ to be equal
to zero, by taking the trivial bracket $[-\Gamma, \Gamma]$. For this
to be valid, $\epsilon_L$ and $\epsilon_R$ have to satisfy
\begin{align}\label{mchoi}
  \Gamma |K_L|^{1/p} \leq \epsilon_L ~~ \text{ and } ~~   \Gamma
  |K_R|^{1/p} \leq \epsilon_R. 
\end{align}
We now choose $m$, $\epsilon_i, 0 \leq i \leq 2m+2, \epsilon_L,
\epsilon_R$ so as to satisfy \eqref{epscondp}.  First of all, we take
\begin{align}\label{fcho}
  \epsilon_0 = \epsilon_L = \epsilon_R = \frac{\epsilon}{6^{1/p}}. 
\end{align}
To satisfy the conditions \eqref{mchoi}, note that $|K_L|$ and
$|K_R|$ are bounded by $C_d 2^{-m} \eta$ (note also that $\eta$ is a
constant). Thus \eqref{mchoi} can be ensured for $\epsilon_L =
\epsilon_R = \epsilon/6^{1/p}$ for $m \leq C_{d, p}
\log(\Gamma/\epsilon)$ for some $C_{d, p}$.

We also take
\begin{align}\label{schoi}
  \epsilon_i = \max \left(\frac{1}{2 \cdot 12^{1/p}} t_i
  |\widehat{K}_i|^{1/p} \epsilon, \frac{\epsilon}{4^{1/p}
  (2m+2)^{1/p}} \right).
\end{align}
It is then easy to check (using \eqref{oneconse}) that
\begin{align*}
  \sum_{i=1}^{2m+2} \epsilon_i^p &\leq \sum_{i=1}^{2m+2} \left(\frac{1}{2 \cdot 12^{1/p}} t_i
  |\widehat{K}_i|^{1/p} \epsilon \right)^p + \sum_{i=1}^{2m+2} \left(\frac{\epsilon}{4^{1/p}
  (2m+2)^{1/p}}  \right)^p \\ &\leq \frac{\epsilon^p}{4} +
  \frac{\epsilon^p}{4} = \frac{\epsilon^p}{2}. 
\end{align*}
Thus our choices satisfy \eqref{epscondp}. The proof of Lemma
\ref{simplex} is now completed by plugging these choices in
\eqref{crubo} and \eqref{crubo1}, and combining the resulting
bounds with \eqref{axu1} and \eqref{axu2}. 
\end{proof}

\subsection{Proofs of Lemma \ref{ballapp} and Lemma
  \ref{lbf0.ran}}\label{adproofs}

\begin{proof}[Proof of Lemma \ref{ballapp}]
   Let us first assume that $\Omega$ is a polytope (satisfying
   \eqref{OmegaAss}) whose number of facets is bounded by a constant
   depending on $d$ alone. For a fixed $\eta > 0$, let
   $\mathfrak{C}_{\eta}$ be the collection of all cubes of the form 
  \begin{equation}\label{allcubes}
    [k_1 \eta, (k_1 + 1) \eta] \times \dots \times [k_d \eta, (k_d +
    1) \eta]
  \end{equation}
  for $(k_1, \dots, k_d) \in \mathbb{Z}^d$ which intersect
  $\Omega$. Because $\Omega$ is contained in the unit ball, there
  exists a dimensional constant $c_d$ such that the cardinality of
  $\mathfrak{C}_{\eta}$ is at most $C_d \eta^{-d}$ for $\eta \leq
  c_d$.

   For each $B \in \mathfrak{C}_{\eta}$, the set $B \cap \Omega$
  is a polytope whose number of facets is bounded from above by a
  constant depending on $d$ alone. This polytope can therefore be
  triangulated into at most $C_d$ number of $d$-simplices. Let
  $\Delta_1, \dots, \Delta_m$ be the collection obtained by the taking
  the all of the aforementioned simplices as $B$ varies over
  $\mathfrak{C}_{\eta}$. These simplices clearly satisfy $\Omega =
  \cup_{i=1}^m \Delta_i$ and the second requirement of Lemma
  \ref{ballapp}. Moreover
  \begin{equation*}
    m \leq C_d \eta^{-d}
  \end{equation*}
  and the diameter of each simplex $\Delta_i$ is at most $C_d
  \eta$. Now define $\tilde{f}_{\eta}$ to be a piecewise affine convex
  function that agrees with $f_0(x) = \|x\|^2$ for each vertex of each
  simplex $\Delta_i$ and is defined by linearity everywhere else on
  $\Omega$. This function is clearly affine on each $\Delta_i$,
  belongs to $\C_{C_d}^{C_d}(\Omega)$ for a sufficiently large $C_d$
  and it satisfies
  \begin{equation*}
   \sup_{x \in \Delta_i} |f_0(x) - \tilde{f}_{\eta}(x)| \leq C_d
   \left(\text{diameter}(\Delta_i) \right)^2 \leq C_d \eta^2.
 \end{equation*}
 Now given $k \geq 1$, let $\eta = c_d k^{-1/d}$ for a sufficiently
 small dimensional constant $c_d$ and let $\tilde{f}_k$ to be the
 function $\tilde{f}_{\eta}$ for this $\eta$. The number of simplices
 is now $m \leq C_d k$ and
 \begin{equation*}
   \sup_{x \in \Delta_i} |f_0(x) - \tilde{f}_{\eta}(x)| \leq C_d k^{-2/d}
 \end{equation*}
 which completes the proof of Lemma \ref{ballapp} when $\Omega$ is a
 polytope. 

Now assume that $\Omega$ is a generic convex body (not necessarily a
polytope) satisfying \eqref{OmegaAss}. Here the only difference in the
proof is that we take $\mathfrak{D}_{\eta}$ be the collection
of all cubes of the form \eqref{allcubes} for $(k_1, \dots, k_d) \in
\mathbb{Z}^d$ which are contained in the 
  interior of $\Omega$.  Because each of these cubes has diameter
  $\eta \sqrt{d}$, it follows that
  \begin{equation*}
\left(1 - C_d \eta \right) \Omega \subseteq  \cup_{B \in
  \mathfrak{D}_{\eta}} B \subseteq \Omega.
\end{equation*}
The rest of the argument is the same as in the polytopal case. 
\end{proof}

\begin{proof}[Proof of Lemma \ref{lbf0.ran}]
By a standard argument involving local perturbations to the quadratic
function $f_0(x) = \|x\|^2$, we can prove the following for three
constants $c_1, c_2, C$ depending on $d$ alone: for every
$\epsilon \leq c_1$ and $L \geq C$, there exist an integer $N$ with 
\begin{equation*}
  c_2 \epsilon^{-d/2} \leq \log N \leq 2c_2 \epsilon^{-d/2}
\end{equation*}
and functions $f_1, \dots, f_N \in \C_L^L(\Omega)$  such that
\begin{equation*}
  \min_{1 \leq i \neq j \leq N} \ell_{\P}(f_i, f_j) \geq \sqrt{2}\epsilon ~~
  \text{ and } ~~ \max_{1 \leq i \leq N} \sup_{x \in \Omega} |f_i(x) -
  f_0(x)| \leq 4 \epsilon.
\end{equation*}
One explicit construction of such perturbed functions is given in the
proof of Lemma \ref{lbf0}. Lemma \ref{lbf0.ran} follows from the above
claim and the Hoeffding inequality. Indeed, Hoeffding's inequality
applied to the random 
variables $(f_j(X_i) - f_k(X_i))^2$ (which are bounded by $64
\epsilon^2$) followed by a union bound allows us to deduce that, for
every $t > 0$,  
\begin{equation*}
  \P \left\{\ell_{\P_n}^2(f_j, f_k) - \ell_{\P}^2(f_j, f_k) \geq -t n^{-1/2}
    \text{ for all } j , k \right\} \geq 1 - N^2 \exp
  \left(\frac{-t^2}{\Gamma \epsilon^4} \right).
\end{equation*}
for a universal constant $\Gamma$. Taking $t = \epsilon^2 \sqrt{n}$,
we get
\begin{align*}
  \P \left\{\ell_{\P_n}(f_j, f_k) \geq \epsilon   \text{ for all } j ,
    k \right\}  &\geq 1 - N^2 \exp \left(\frac{-n}{\Gamma} \right) \\ &\geq
  1 - \exp \left(4 c_2 \epsilon^{-d/2} - \frac{n}{\Gamma} \right).
\end{align*}
Assuming now that $\epsilon \geq n^{-2/d} (8 c_2 \Gamma)^{2/d}$, we
get 
\begin{equation*}
  \P \left\{\ell_{\P_n}(f_j, f_k) \geq \epsilon   \text{ for all } j ,
    k \right\}  \geq
  1 - \exp \left(- \frac{n}{2\Gamma} \right).
\end{equation*}
As each $f_j$ satisfies $\ell_{\P_n}(f_j, f_0) \leq \sup_{x} |f_j(x) -
f_0(x)| \leq 4\epsilon$, the above completes the proof of Lemma
\ref{lbf0.ran}. 
\end{proof}

\section{Proofs of fixed-design results for the unrestricted convex
  LSE} \label{fd.proofs}
This section contains the proofs of Theorem \ref{wcad}, Theorem
\ref{minsub}, Theorem \ref{adrate} and Theorem \ref{implb}. We follow
the sketch given in Subsection \ref{pfskefdr} and results quoted in
that subsection are also proved here. Subsection \ref{subs1} below
contains the proof of Theorem \ref{wcad} and that of the main
ingredient in its proof, Lemma \ref{GtUBwcFD}.  Subsection \ref{subs2}
contains the proof of Theorem \ref{adrate} and that of the main
ingredient in its proof, Lemma \ref{GtUBadFD}. Subsection
\ref{minsubproof} contains the proof of Theorem
\ref{minsub}. Subsection \ref{subs4} contains the proof of Theorem
\ref{implb}. Lemma \ref{allkt.lem} and Lemma \ref{lbf0}, which are both
crucial for the proof of Theorem \ref{implb}, are proved in Subsection
\ref{allktproof}. The metric entropy results are proved in Subsections
\ref{fdsimproofs} and \ref{discmentproofs}. Specifically, the main
metric entropy result (Theorem \ref{polytope}) is proved in Subsection
\ref{discmentproofs}. The two important corollaries of Theorem
\ref{polytope}: inequality \eqref{simmet} and inequality
\eqref{simmet1}, are proved in Subsection \ref{fdsimproofs}.

\subsection{Proof of Lemma \ref{GtUBwcFD} and Theorem \ref{wcad}}\label{subs1}
The proof of Lemma \ref{GtUBwcFD} makes crucial use of the metric entropy bound
\eqref{simmet}.

\begin{proof}[Proof of Lemma \ref{GtUBwcFD}]
  The metric entropy bound \eqref{simmet}, together with Dudley's
bound (Theorem \ref{dud}), give
\begin{align}
  &\frac{G_{f_0}(t, \C(\Omega))}{\sigma} \nonumber \\ &\leq \frac{C_d (\log n)^{F/2}}{\sqrt{n}}
  \int_{\theta}^{t/2} \left(\frac{t + \lin}{\epsilon} \right)^{d/4}
                        d\epsilon + 2 \theta \nonumber \\
  &\leq \frac{(C_d2^{d/4}) (\log n)^{F/2}}{\sqrt{n}}
 \left\{ \int_{\theta}^{t/2} \left(\frac{t}{\epsilon} \right)^{d/4}
    d\epsilon + \int_{\theta}^{t/2} \left(\frac{\lin}{\epsilon} \right)^{d/4}
    d\epsilon  \right\} + 2 \theta \label{dint}
\end{align}
for every $0 < \theta \leq t/2$. We replace $C_d 2^{d/4}$ by just
$C_d$ (in general, the value of $C_d$ can change from place to
place). We now split into the three 
cases $d \leq 3, d = 4$ and $d \geq 5$. For $d \leq 3$, we take
$\theta = 0$ to get
\begin{equation*}
  G_{f_0}(t, \C(\Omega)) \le \frac{C_d\sigma}{\sqrt{n}} (\log n)^{F/2} \left(t +
\lin^{d/4} t^{1-d/4} \right).
\end{equation*}
For $d = 4$, \eqref{dint} leads to
\begin{equation*}
  G_{f_0}(t, \C(\Omega)) \leq \frac{C_d \sigma}{\sqrt{n}} (\log n)^{F/2} (t + \lin) \log
  \frac{t}{2\theta} + 2 \sigma \theta.
\end{equation*}
Choosing $\theta = t/(2\sqrt{n})$, we obtain
\begin{equation*}
  G_{f_0}(t, \C(\Omega)) \leq \frac{C_d \sigma}{\sqrt{n}} \left(t + \lin \right) (\log n)^{1+(F/2)}
\end{equation*}
Finally, for $d \geq 5$, \eqref{dint} leads to the bound
\begin{align*}
  G_{f_0}(t, \C(\Omega)) &\leq \frac{C_d \sigma (\log n)^{F/2}}{\sqrt{n}}
 \left\{ \int_{\theta}^{\infty} \left(\frac{t}{\epsilon} \right)^{d/4}
    d\epsilon + \int_{\theta}^{\infty} \left(\frac{\lin}{\epsilon} \right)^{d/4}
         d\epsilon  \right\} + 2 \sigma \theta \\
  & \leq C_d \frac{\sigma (\log n)^{F/2}}{\sqrt{n}} (t + \lin)^{d/4}
  \theta^{1-(d/4)} + 2 \sigma \theta
\end{align*}
for every $\theta > 0$. The choice
\begin{equation*}
  \theta = \left(\frac{C_d (\log n)^{F/2}}{\sqrt{n}} \right)^{4/d} (t + \lin)
\end{equation*}
gives
\begin{equation*}
  G_{f_0}(t, \C(\Omega)) \le C_d\sigma \left(\frac{(\log n)^{F/2}}{\sqrt{n}}
  \right)^{4/d} \left(t + \lin \right). 
\end{equation*}
\end{proof}

Theorem \ref{wcad}, which follows from Lemma \ref{GtUBwcFD} and
Theorem \ref{chat}, is proved next.
\begin{proof}[Proof of Theorem \ref{wcad}]
  By Theorem \ref{chat} (specifically the upper bound in inequality
  \eqref{expecb}, and \eqref{tfub}), the risk is bounded from above by
  $2 t^2 + C\sigma^2/n$ for every $t$ satisfying $H_{f_0}(t,
  \C(\Omega)) \leq 0$ or, equivalently, $G_{f_0}(t, \C(\Omega)) \leq
  t^2/2$. We shall use the bound on $G_{f_0}(t, \C(\Omega))$ given in
  Lemma \ref{GtUBwcFD} to get $t$ such that $G_{f_0}(t, \C(\Omega)) \leq
  t^2/2$. The risk will be dominated by such $t$ because this $t$ will
  be of larger order than $\sigma^2/n$. For $d \leq 3$, Lemma
  \ref{GtUBwcFD} gives 
\begin{equation*}
  G_{f_0}(t, \C(\Omega)) \le \frac{C_d\sigma}{\sqrt{n}} (\log n)^{F/2} \left(t +
\lin^{d/4} t^{1-d/4} \right).
\end{equation*}
Because
\begin{equation*}
\frac{C_d \sigma}{\sqrt{n}} (\log n)^{F/2} t \le \frac{t^2}{4} ~~
  \text{ if and only if } ~~ t \geq \frac{4C_d \sigma}{\sqrt{n}} (\log
  n)^{F/2}
\end{equation*}
and
\begin{equation*}
  \frac{C_d \sigma}{\sqrt{n}} (\log n)^{F/2}
  \lin^{d/4} t^{1-d/4} \leq
  \frac{t^2}{4} ~~ \text{ iff } ~~ t \geq (4 C_d)^{\frac{4}{d+4}}
  \left(\frac{\sigma (\log n)^{F/2}}{\sqrt{n}} \right)^{\frac{4}{d+4}}
  \lin^{\frac{d}{d+4}},
\end{equation*}
we deduce that $G_{f_0}(t, f_0) \leq t^2/2$ for 
\begin{equation*}
t \geq C_d \max
    \left(\left(\frac{\sigma (\log n)^{F/2}}{\sqrt{n}} \right)^{\frac{4}{d+4}}
\lin^{\frac{d}{d+4}}, \frac{\sigma}{\sqrt{n}} (\log
  n)^{F/2}   \right).
\end{equation*}
This proves, for $d \leq 3$,
\begin{equation*}
   \E_{f_0} \ell_{\empdes}^2(\hat f_n, f_0) \leq    C_d\max \left\{ \lin^{\frac{2d}{4 + d}} \left(\frac{\sigma^2}{n} (\log n)^F
     \right)^{\frac{4}{d+4}}, \frac{\sigma^2}{n} (\log n)^F \right\}
 \end{equation*}
The leading term in the right hand side above is the first
term inside the maximum which proves Theorem \ref{wcad} for $d \leq
3$. 

For $d = 4$, Lemma \ref{GtUBwcFD} gives
\begin{equation*}
  G_{f_0}(t, \C(\Omega)) \leq \frac{C_d \sigma}{\sqrt{n}} \left(t + \lin \right) (\log n)^{1+(F/2)}
\end{equation*}
from which we can deduce (as in the case $d \leq 3$) that $G(t) \leq t^2/2$ for 
\begin{equation*}
t \geq C_d \max
   \left( \frac{\sqrt{\sigma \lin} (\log n)^{(F/4) + (1/2)}}{n^{1/4}}
   \frac{\sigma (\log n)^{1 + (F/2)}}{\sqrt{n}} \right). 
\end{equation*}
This proves, for $d = 4$,
\begin{equation*}
   \E_{f_0} \ell_{\empdes}^2(\hat f_n, f_0) \leq    C_4  \max \left\{\frac{\sigma
     \lin}{\sqrt{n}}  (\log n)^{1 + \frac{F}{2}},
     \frac{\sigma^2 }{n} (\log n)^{2 + F} \right\}. 
\end{equation*}
The leading term in the right hand side above is the first
term inside the maximum which proves Theorem \ref{wcad} for $d = 4$.

Finally, for $d \geq 5$, Lemma \ref{GtUBwcFD} gives
\begin{equation*}
  G_{f_0}(t, \C(\Omega)) \le 2\sigma \left(\frac{C_d (\log n)^{F/2}}{\sqrt{n}}
  \right)^{4/d} \left(t + \lin \right)
\end{equation*}
from which it follows that $G(t) \leq t^2/2$ for
\begin{equation*}
t \geq C_d \max
    \left(\sqrt{\sigma \lin} \left(\frac{(\log n)^{F/2}}{\sqrt{n}}
      \right)^{2/d}, \sigma \left(\frac{(\log n)^{F/2}}{\sqrt{n}}
      \right)^{4/d} \right). 
\end{equation*}
This proves, for $d \geq 5$, 
\begin{equation*}
   \E_{f_0} \ell_{\empdes}^2(\hat f_n, f_0) \leq    C_d \max \left\{ \sigma \lin \left(\frac{(\log n)^{F}}{n} \right)^{\frac{2}{d}},
   \sigma^2 \left(\frac{(\log n)^{F}}{n}
   \right)^{\frac{4}{d}}\right\}. 
\end{equation*}
The leading term in the right hand side above is the first
term inside the maximum which proves Theorem \ref{wcad} for $d \geq 5$.
\end{proof}

\subsection{Proof of Lemma \ref{GtUBadFD} and Theorem
  \ref{adrate}}\label{subs2} 
First we prove Lemma \ref{GtUBadFD} which crucially uses the metric
entropy bound \eqref{simmet1}. 

\begin{proof}[Proof of Lemma \ref{GtUBadFD}]
Combining the entropy bound \eqref{simmet1} and Dudley's result
(Theorem \ref{dud}), we get  
\begin{equation}\label{adint}
  G_{f_0}(t, \C(\Omega)) \leq \sigma \sqrt{\frac{k}{n}} (c_d \log
  n)^{h/2} \int_{\theta}^{t/2} \left(\frac{t}{\epsilon} \right)^{d/4}
  d\epsilon + 2 \sigma \theta
\end{equation}
for every $0 < \theta \leq t/2$. We now split into the three cases $d
\le 3$, $d = 4$ and $d \geq 5$. When $d \le 3$, we can take $\theta =
0$ to obtain 
\begin{equation*}
  G_{f_0}(t, \C(\Omega)) \leq t \sigma \sqrt{\frac{k}{n}} (c_d\log
  n)^{h/2}.
\end{equation*}
For $d = 4$, we get
\begin{equation*}
  G_{f_0}(t, \C(\Omega)) \leq \sigma \sqrt{\frac{k}{n}} (c_d\log n)^{h/2} t \log
  \frac{t}{2\theta} + 2 \sigma \theta.
\end{equation*}
Choosing $\theta := t/(2\sqrt{n})$, we get
\begin{equation*}
  G_{f_0}(t,\C(\Omega))\leq \sigma t \sqrt{\frac{k}{n}} (c_d\log n)^{1+h/2}.
\end{equation*}
For $d \geq 5$,  we get
\begin{align*}
  G_{f_0}(t, \C(\Omega)) &\le \sigma \sqrt{\frac{k}{n}} (c_d \log
  n)^{h/2} \int_{\theta}^{\infty} \left(\frac{t}{\epsilon} \right)^{d/4}
         d\epsilon + 2 \sigma \theta  \\
  &\leq C_d \sigma \sqrt{\frac{k}{n}} (c_d\log
  n)^{h/2} t^{d/4} \theta^{1-(d/4)} + 2 \sigma \theta.
\end{align*}
Take $\theta = t \left( (c_d\log n)^{h/2} \sqrt{\frac{k}{n}}
\right)^{4/d}$ to get
\begin{equation}
  \label{gtkp.eq}
G_{f_0}(t, \C(\Omega)) \leq \sigma t \left((c_d\log n)^{h/2}
    \sqrt{\frac{k}{n}} \right)^{4/d}.
\end{equation}
which completes the proof of Lemma \ref{GtUBadFD}.
\end{proof}

We next provide the proof of Theorem \ref{adrate}.

\begin{proof}[Proof of Theorem \ref{adrate}]
From Lemma \ref{GtUBadFD}, it is straightforward to see that
$G_{f_0}(t, \C(\Omega)) \leq t^2/2$ provided
  \begin{equation*}
t \geq
  \begin{cases}
 2 \sigma \sqrt{\frac{k}{n}} (c_d\log n)^{h/2} ~~~~~~~\text{for } d  \le 3 \\
2 \sigma \sqrt{\frac{k}{n}} (c_d\log n)^{1 + (h/2)}   ~~~~~~~~~ \text{for } d = 4 \\
2 \sigma  \left((c_d\log n)^{h/2} \sqrt{\frac{k}{n}} \right)^{4/d}    ~~~~~~  
 \text{for } d \geq 5. 
  \end{cases}
\end{equation*}  
Theorem \ref{GtUBadFD} then follows by an application of Theorem
\ref{chat}.  
\end{proof}

\subsection{Proof of Theorem \ref{minsub}}\label{minsubproof}
This basically follows from Theorem \ref{implb}. Let $c_d$ and
$N_d$ be as given by Theorem \ref{implb}. Letting $k = \sqrt{n}
\sigma^{-d/4}$ and assuming that $n \geq \max(N_d,
(c_d^{-2})\sigma^{-d/2}$), we obtain from Theorem \ref{implb} that
\begin{equation*}
  \sup_{f_0 \in \C^{C_d}_{C_d}(\Omega)}
    \E_{f_0} \ell_{\empdes}^2(\hat{f}_n, f_0)\geq c_d \sigma n^{-2/d}
    (\log n)^{-4(d+1)/d}.
  \end{equation*}
where $C_d$ is such that $\tilde{f}_k \in \C^{C_d}_{C_d}(\Omega)$
(existence of such a $C_d$ is guaranteed by Lemma \ref{ballapp}). The
required lower bound \eqref{minsub.eq} on the class $\C_L^L(\Omega)$
for an arbitrary $L > 0$ can now be obtained by an elementary scaling
argument.

\subsection{Proof of Theorem \ref{implb}}\label{subs4} 
We prove Theorem \ref{implb} below using Lemma \ref{allkt.lem}. The
proof of Lemma \ref{allkt.lem} is given in Subsection
\ref{allktproof}. 

\begin{proof}[Proof of Theorem \ref{implb}]
  Let $\tilde{G}(t) :=
  G_{\tilde{f}_k}(t, \C(\Omega))$ and $t_{\tilde{f}_k} :=
  t_{\tilde{f}_k}(\C(\Omega))$ for notational convenience. By the
  lower bound given by Lemma 
  \ref{allkt.lem},   
  \begin{equation*}
    \sup_{t > 0} \left(\tilde{G}(t) - \frac{t^2}{2}
    \right) \geq 
    \sup_{t \leq c_d k^{-2/d}} \left(c_d \sigma t
      \left(\frac{k}{n} \right)^{2/d} - \frac{t^2}{2} \right).
  \end{equation*}
  Taking $t = c_d \sigma (k/n)^{2/d}$  and noting that
  \begin{equation*}
    t = c_d \sigma \left(\frac{k}{n} \right)^{2/d} \leq c_d
    k^{-2/d} \qt{if and only if $k \leq \sqrt{n} \sigma^{-d/4}$},
  \end{equation*}
  we get that
  \begin{equation*}
    \sup_{t > 0} \left(\tilde{G}(t) - \frac{t^2}{2}
    \right) \geq 
    \frac{c_d^2 \sigma^2}{2} \left(\frac{k}{n} \right)^{4/d}.
  \end{equation*}
  The above inequality, combined with the upper bound \eqref{upbcl}
  and the fact that $t_{\tilde{f}_k}$ maximizes
  $\tilde{G}(t)  - t^2/2$ over all $t > 0$, yield 
  \begin{align*}
    \frac{c_d^2 \sigma^2}{2} \left(\frac{k}{n} \right)^{4/d}
    &\leq \sup_{t > 0} \left(\tilde{G}(t) -
      \frac{t^2}{2} \right) \\ &
                                 =
                                 \tilde{G}(t_{\tilde{f}_k}) -
                                 \frac{t^2_{\tilde{f}_k}}{2}
                                 \\ &\leq
     \tilde{G}(t_{\tilde{f}_k}) \leq C_d \sigma
      t_{\tilde{f}_k} (\log n)^{2(d+1)/d} \left(\frac{k}{n} \right)^{2/d}.
  \end{align*}
  This implies
  \begin{equation*}
    t_{\tilde{f}_k} \geq \frac{c_d^2}{2C_d} \sigma
    \left(\frac{k}{n} \right)^{2/d} (\log n)^{-2(d+1)/d}.
  \end{equation*}
  Theorem \ref{chat} then gives
  \begin{equation*}
    \E_{\tilde{f}_k}\ell_{\P_n}^2 \left(\hat{f}_n(\C(\Omega)), \tilde{f}_k \right) \geq
    \frac{c_d^4}{8 C_d^2} \sigma^2 \left(\frac{k}{n}
    \right)^{4/d} (\log n)^{-4(d+1)/d} - \frac{C \sigma^2}{n}.
  \end{equation*}
  The first term on the right hand side above
  dominates the second term when $n$ is larger than a constant
  depending on $d$ alone. This completes the proof of Theorem
  \ref{implb}.
\end{proof}

\subsection{Proofs of Lemma \ref{allkt.lem} and Lemma
  \ref{lbf0}}\label{allktproof}
We first prove Lemma \ref{allkt.lem} below assuming the validity of
Lemma \ref{lbf0}. Lemma \ref{lbf0} is proved later in this
subsection.

\begin{proof}[Proof of Lemma \ref{allkt.lem}]
    By Lemma \ref{ballapp}, $\tilde{f}_k$ satisfies
  \begin{equation}\label{app.con}
    \ell_{\empdes}(f_0, \tilde{f}_k) \leq \sup_{x \in \Omega} \left|f_0(x) -
      \tilde{f}_k(x) \right| \leq C_d k^{-2/d}.
  \end{equation}
  where $f_0(x) := \|x\|^2$. Assume first that $t =
  2C_d k^{-2/d}$ where $C_d$ is the constant from \eqref{app.con}. For
  this choice of $t$, it follows from the triangle inequality (and
  \eqref{app.con}) that
  \begin{equation*}
    \left\{f \in \C(\Omega) : \ell_{\P_n}(f, \tilde{f}_k) \leq t
    \right\} \supseteq \left\{f \in \C(\Omega) : \ell_{\P_n}(f,
      f_0) \leq C_d k^{-2/d} \right\}. 
  \end{equation*}
  This immediately implies
  \begin{equation*}
    G_{\tilde{f}_k}(t, \C(\Omega)) \geq G_{f_0}(C_d k^{-2/d},
    \C(\Omega)). 
  \end{equation*}
 Let $\tilde{G}(t) := G_{\tilde{f}_k}(t, \C(\Omega))$ and $G_0(t) :=
 G_{f_0}(t, \C(\Omega))$ in the rest of this proof (this is for
 notational convenience). We just proved $\tilde{G}(t) \geq G_0(C_d
 k^{-2/d})$ for $t = 2C_d k^{-2/d}$. By Sudakov minoration (Lemma
 \ref{sud}), $G_0(C_d k^{-2/d})$ is bounded from below by 
  \begin{align*}
\frac{\beta
      \sigma}{\sqrt{n}} \sup_{\epsilon > 0} \left\{\epsilon \sqrt{\log
                                        N(\epsilon,\left\{g \in \C(\Omega) : \ell_{\P_n}(f_0,
      g) \leq C_d k^{-2/d} \right\}, \ell_{\empdes})}
                                        \right\}.
  \end{align*}
Lemma \ref{lbf0} gives a lower bound on the metric entropy appearing
above. Applying this for $\epsilon = c_1 n^{-2/d}$, we get 
  \begin{equation*}
    \tilde{G}(t) \geq \frac{\beta \sigma}{\sqrt{n}} (c_1 n^{-2/d})
    \sqrt{\frac{n}{8}} = \frac{\beta c_1}{\sqrt{8}} \sigma n^{-2/d}
  \end{equation*}
  provided
  \begin{equation}\label{kcon1}
    k \leq \left(\frac{C_d}{c_2} \right)^{d/2} n.
  \end{equation}
  The condition above is necessary for the inequality $c_2 n^{-2/d}
  \leq C_d k^{-2/d}$ which is required for the application of Lemma
  \ref{lbf0}. This gives
  \begin{equation*}
    \tilde{G}(t) \geq \frac{\beta c_1}{\sqrt{8}} \sigma n^{-2/d}
    \qt{for $t = 2 C_d k^{-2/d}$}.
  \end{equation*}
  Now for $t \leq 2 C_d k^{-2/d}$, we use the fact that $x \mapsto
  \tilde{G}(x)$ is concave on $[0, \infty)$ (and that $\tilde{G}(0) =
  0$) to deduce that
  \begin{equation*}
    \frac{\tilde{G}(t)}{t} \geq \frac{\tilde{G}(2 C_d k^{-2/d})}{2C_d
      k^{-2/d}} \geq
    \sigma \left(\frac{k}{n} \right)^{2/d} \frac{\beta c_1}{2 \sqrt{8}
      C_d} \qt{for all $t \leq 2C_d k^{-2/d}$}.
  \end{equation*}
  This proves \eqref{allkt} for
  \begin{equation*}
    c_d = \min \left(\frac{\beta c_1}{2 \sqrt{8}
      C_d}, 2C_d, \left(\frac{C_d}{c_2} \right)^{d/2} \right), 
\end{equation*}
  completing the proof of Lemma \ref{allkt.lem}. 
\end{proof}

We next prove Lemma \ref{lbf0}. 

\begin{proof}[Proof of Lemma \ref{lbf0}]
  We shall use a perturbation result that is similar to the one
  mentioned at the beginning of the proof of Lemma
  \ref{lbf0.ran}. Because we are dealing with the discrete metric
  $\ell_{\P_n}$ here, this argument might be nonstandard so we provide
  an explicit construction (some aspects of this construction were
  also used in the proof of Proposition \ref{fixlobo}). Let 
\begin{equation*}
g(x_1,x_2,\ldots,x_d)=\twopiece{\sum_{i=1}^d
	\cos^3(\pi  x_i)}{(x_1,x_2,\ldots,x_d)\in[-1/2,1/2]^d}{0}{(x_1,x_2,\ldots,x_d)\notin[-1/2,1/2]^d}.
\end{equation*}
Note that $g$ is smooth, $\frac{\partial^2 g}{\partial x_i \partial
  x_j} = 0$ for $i \neq j$ and
\begin{equation*}
  \left|\frac{\partial^2 g}{\partial x_i^2} (x_1, \dots, x_d) \right|
  \leq \frac{4 \sqrt{2}}{3} \pi^2
\end{equation*}
which implies that the Hessian of $g$ is dominated by $(4\sqrt{2}
\pi^2/3)$ times the identity matrix. It is also easy to check that the
Hessian of $g$ equals zero on the boundary of $[-0.5, 0.5]^d$.

Now for every grid point $s := (k_1 \delta, \dots, k_d \delta)$ in
$\grid \cap \Omega$, consider the function
\begin{equation*}
  g_s(x_1, \dots, x_d) := \delta^2 g \left(\frac{x_1 - k_1
      \delta}{\delta}, \dots, \frac{x_d - k_d
      \delta}{\delta} \right).
\end{equation*}
Clearly $g_s$ is supported on the cube
\begin{equation*}
\prod_{j=1}^d  [(k_j - 1/2) \delta, (k_j + 1/2) \delta] 
\end{equation*}
and these cubes for different grid points have disjoint
interiors.

We now consider binary vectors in $\{0, 1\}^n$. We shall index each
$\xi \in \{0, 1\}^n$ by $\xi_s, s \in \grid \cap \Omega$. For every
$\xi = (\xi_s, s \in \grid \cap \Omega) \in \{0, 1\}^n$, consider the
function
\begin{equation}\label{gsdef}
  G_{\xi}(x) = f_0(x) + \frac{3}{4 \sqrt{2} \pi^2} \sum_{s \in
  \grid \cap \Omega} \xi_s g_s(x).
\end{equation}
It can be verified that $G_{\xi}$ is convex because $f_0$ has constant
Hessian equal to $2$ times the identity, the Hessian of $g_{s}$ is
bounded by $(4 \sqrt{2} \pi^2/3)$ and the supports of $g_{s}, s \in
\grid \cap \Omega$ have disjoint interiors. Note further that for
$\xi, \xi' \in \{0, 1\}^n$ and $s \in \grid \cap \Omega$,
\begin{equation*}
  G_{\xi}(s) - G_{\xi'}(s) = \frac{3 d \delta^2}{4 \sqrt{2} \pi^2}
  \left(\xi_s - \xi'_s \right).
\end{equation*}
This implies that
\begin{equation*}
  \ell_{\empdes}(G_{\xi}, G_{\xi'}) = \frac{3 d \delta^2}{4 \sqrt{2}
    \pi^2} \sqrt{\frac{\Upsilon(\xi, \xi')}{n}}
\end{equation*}
where $\Upsilon(\xi, \xi') := \sum_{s \in \grid \cap \Omega} I\{\xi_s
\neq \xi'_s\}$ is the Hamming distance between $\xi$ and $\xi'$. The
Varshamov-Gilbert lemma (see e.g., \cite[Lemma 4.7]{Massart03Flour})
asserts the existence of a subset $W$ of $\{0, 1\}^n$ with cardinality
$|W| \geq \exp(n/8)$ such that $\Upsilon(\xi, \xi') \geq n/4$ for all
$\xi, \xi' \in W$ with $\xi \neq \xi'$. We then have
\begin{equation*}
  \ell_{\empdes}(G_{\xi}, G_{\xi'}) \geq \frac{3 d \delta^2}{8 \sqrt{2}
    \pi^2} \qt{for all $\xi, \xi' \in W$ with $\xi \neq \xi'$}.
\end{equation*}
Inequality \eqref{nlob} then gives
\begin{equation*}
  \ell_{\empdes}(G_{\xi}, G_{\xi'}) \geq c_1 n^{-2/d} \qt{for all $\xi, \xi' \in W$ with $\xi \neq \xi'$}.
\end{equation*}
for a dimensional constant $c_1$. One can also
check that
\begin{equation*}
  \ell_{\empdes}(G_{\xi}, f_0) \leq \frac{3 d \delta^2}{4 \sqrt{2}
    \pi^2} \leq c_2 n^{-2/d}
\end{equation*}
for another dimensional constant $c_2$ completing the
proof of Lemma \ref{lbf0}.
\end{proof}

\subsection{Proofs  of inequality \eqref{simmet} and inequality
  \eqref{simmet1}}\label{fdsimproofs}
In this subsection, we provide proofs of the discrete metric entropy
results given by inequalities \eqref{simmet} and \eqref{simmet1}. We
shall assume and use Theorem \ref{polytope} for these proofs. The
proof of Theorem \ref{polytope} is given in the next subsection
(Subsection \ref{discmentproofs}).

\begin{proof}[Proof of Inequality \eqref{simmet}]
  By specializing to the case $p = 2$ and writing $\ell_{\P_n}$ for
  $\ell_{\grid}(\cdot, \Omega, 2)$, we can rephrase the conclusion of
  Theorem \ref{polytope} as:
  \begin{equation*}
    \log N\left(\epsilon, \{g \in \C(\Omega): \ell_{\P_n}(g, 0) \leq t \}, \ell_{\P_n}
    \right) \leq \left[c_d \log(1/\delta) \right]^F
    \left(\frac{t}{\epsilon} \right)^{d/2}. 
  \end{equation*}
  Here the $0$ in $\ell_{\P_n}(g, 0) \leq t$ refers to the function
  that is identically equal to zero. Because in the representation
  \eqref{parallel}, the number $F$ is assumed to be bounded from above
  by a constant depending on $d$ alone, we have $\left[c_d
    \log(1/\delta) \right]^F \leq C_d \log(1/\delta)$. We then use
  \eqref{nlob} to bound $\log (1/\delta)$ by a constant multiple of
  $\log n$. These give
  \begin{equation*}
    \log N\left(\epsilon, \{f \in \C(\Omega): \ell_{\P_n}(f, 0) \leq t \}, \ell_{\P_n}
    \right) \leq C_d (\log n)^F
    \left(\frac{t}{\epsilon} \right)^{d/2}. 
  \end{equation*}
  Because $f \in \C(\Omega)$ if and only if $f - g_0 \in \C(\Omega)$
  for every affine function $g_0$ on $\Omega$ (i.e., $g_0 \in
  \A(\Omega)$), we deduce
    \begin{equation*}
    \log N\left(\epsilon, \{f \in \C(\Omega): \ell_{\P_n}(f, g_0) \leq t \}, \ell_{\P_n}
    \right) \leq C_d (\log n)^F
    \left(\frac{t}{\epsilon} \right)^{d/2}
  \end{equation*}
  for every $g_0 \in \A(\Omega)$. By triangle inequality,
  \begin{equation*}
    \{f \in \C(\Omega): \ell_{\P_n}(f, f_0) \leq t\} \subseteq \{f \in
    \C(\Omega): \ell_{\P_n}(f, g_0) \leq t + \ell_{\P_n}(f_0, g_0)\}
  \end{equation*}
  for every $g_0 \in \A(\Omega)$. Thus
    \begin{equation*}
    \log N\left(\epsilon, \{f \in \C(\Omega): \ell_{\P_n}(f, f_0) \leq t \}, \ell_{\P_n}
    \right) \leq C_d (\log n)^F
    \left(\frac{t + \ell_{\P_n}(f_0, g_0)}{\epsilon} \right)^{d/2}
  \end{equation*}
  Because $g_0 \in \A(\Omega)$ is arbitrary in the right hand side
  above, we can take the infimum over $g_0 \in \A(\Omega)$. This
  allows us to replace $\ell_{\P_n}(f_0, g_0)$ on the right hand side
  above by $\inf_{g_0 \in \A(\Omega)} \ell_{\P_n}(f_0, g_0) \leq \lin$
  leading to the required inequality \eqref{simmet}.
\end{proof}

\begin{proof}[Proof of Inequality \eqref{simmet1}]
    Fix $f_0 \in \mathfrak{C}_{k, h}(\Omega)$. By definition of
    $\mathfrak{C}_{k, h}(\Omega)$, there exists $k$ subsets $\Omega_1,
    \dots, \Omega_k$ satisfying the following properties:
    \begin{enumerate}
\item $f_0$ is affine on each $\Omega_i$,
\item each $\Omega_i$ can be written as an intersection of at most $h$
  slabs (i.e., as in \eqref{parallel} with $F = h$), 
  and  
\item $\Omega_1 \cap \grid, \dots, \Omega_k \cap \grid$ are disjoint 
  with $\cup_{i=1}^k (\Omega_i \cap \grid) = \Omega \cap \grid$
\end{enumerate}
Note that $n$ is the cardinality of $\Omega \cap \grid$. Let $n_i$
   denote the cardinality of $\Omega_i \cap \grid$ for each $i =
   1, \dots, k$. We can assume that each $n_i > 0$ for otherwise we
   can simply drop that $\Omega_i$. For each $f \in \C(\Omega)$ such
   that $\ell_{\P_n}(f_0, f) \leq t$ and $1 \leq i \leq k$, let
   $\sigma_i(f)$ be the smallest positive integer for which
   \begin{equation}\label{abfre}
     \sum_{s \in \Omega_i \cap \grid} \left(f(s) - f_0(s) \right)^2 \leq n_i
     \sigma_i(f) t^2.
   \end{equation}
   Because $\ell_{\P_n}(f_0, f) \leq t$, we have $1 \le \sigma_i(f)
   \leq n$ for each $i$. Also because $\sigma_i(f)$ is the smallest
   integer satisfying \eqref{abfre}, we have
   \begin{equation*}
     \sum_{s \in \Omega_i \cap \grid} \left(f(s) - f_0(s) \right)^2 \geq
     n_i \left(\sigma_i(f) - 1 \right) t^2
   \end{equation*}
   which implies that
   \begin{align*}
     \sum_{i=1}^k n_i \left(\sigma_i(f) - 1 \right) t^2 &\leq
     \sum_{i=1}^k \sum_{s \in \Omega_i \cap \grid} \left(f(s) - f_0(s)
     \right)^2 \\ &= \sum_{s \in \Omega \cap \grid} \left(f(s) - f_0(s)
     \right)^2 \leq nt^2,
   \end{align*}
   leading to
   \begin{equation*}
     \sum_{i=1}^k n_i \sigma_i(f) \leq n + \sum_{i=1}^k n_i= 2n. 
   \end{equation*}
   Let
   \begin{equation*}
     \Sigma := \left\{(\sigma_1(f), \dots, \sigma_k(f)) : f \in
       \C(\Omega), \ell_{\P_n}(f, f_0) \leq t \right\}, 
   \end{equation*}
   and note that the cardinality of $\Sigma$ is at most $n^k$ as $1 \leq \sigma_i(f)
   \leq n$  for each $i$. For each $(\sigma_1, \dots, \sigma_k) \in
   \Sigma$, let
   \begin{equation*}
     \F_{\sigma_1, \dots, \sigma_k} = \left\{ f \in \C(\Omega):
       \ell_{\P_n}(f, f_0) \leq t \text{ and }  \sigma_i(f) =
       \sigma_i \text{ for each } i = 1, \dots, k
     \right\}.
   \end{equation*}
   By construction, we have
   \begin{equation*}
     \left\{f \in \C(\Omega) : \ell_{\P_n}(f, f_0) \leq t \right\} =
     \bigcup_{(\sigma_1, \dots, \sigma_k) \in \Sigma} \F_{\sigma_1,
       \dots, \sigma_k}
   \end{equation*}
   so that
   \begin{align}
     &\log N\left(\epsilon, \{f \in \C(\Omega) : \ell_{\P_n}(f, f_0)
       \leq t \}, \ell_{\P_n} \right) \nonumber \\
     &\leq \log \left(\sum_{(\sigma_1, \dots, \sigma_k) \in \Sigma}
       N\left(\epsilon, \F_{\sigma_1, \dots, \sigma_k}, \ell_{\P_n}
       \right)\right) \nonumber \\
     &\leq \max_{(\sigma_1, \dots, \sigma_k) \in \Sigma} \log
       N(\epsilon, \F_{\sigma_1, \dots, \sigma_k}, \ell_{\P_n}) + \log
       |\Sigma| \nonumber \\
     &\leq \max_{(\sigma_1, \dots, \sigma_k) \in \Sigma} \log
       N(\epsilon, \F_{\sigma_1, \dots, \sigma_k}, \ell_{\P_n}) + k
       \log n \label{jjcc2}
   \end{align}
   where $|\Sigma|$ denotes the cardinality of the finite set
   $\Sigma$, and we used $|\Sigma| \leq n^k$. We shall now upper-bound $\log
       N(\epsilon, \F_{\sigma_1, \dots, \sigma_k}, \ell_{\P_n})$ for a
       fixed $(\sigma_1, \dots, \sigma_k) \in \Sigma$. The idea is
       that this will follow from Theorem \ref{polytope} applied to
       each $\Omega_i, i = 1, \dots, k$. First observe that Theorem
       \ref{polytope} applied to $\Omega_i$ and $f$ replaced by $f -
       f_0$ gives
       \begin{equation}\label{jjcc}
         \begin{split}
        & \log N\left(\epsilon, \{f \in \C(\Omega_i): \ell_{\grid}(f -
         f_0, \Omega_i, 2) \leq t\}, \ell_{\grid}(\cdot, \Omega_i, 2)
          \right) \\ &\leq \left(c_d \log(1/\delta) \right)^h
                       \left(\frac{t}{\epsilon}\right)^{d/2}
        \end{split}                       
       \end{equation}
      for every $\epsilon, t > 0$. Here it is crucial that $f_0$ is
      affine on $\Omega_i$ (which ensures $f \in \C(\Omega) \iff f -
      f_0 \in \C(\Omega)$); also note that $F$ in \eqref{polytope.eq}
      is replaced by $h$ here because it is assumed that each
      $\Omega_i$ is of the form \eqref{parallel} with $F = h$ (this is
      part of the assumption $f_0 \in \mathfrak{C}_{k,
        h}(\Omega)$). Observe now that if
      \begin{align*}
        \frac{1}{n_i} \sum_{s \in \Omega_i \cap \grid} \left(f(s)  -
        f_0(s)\right)^2 \leq \frac{\epsilon^2}{2} \sigma_i
      \end{align*}
      for each $i = 1, \dots, k$, then
      \begin{align*}
        \frac{1}{n} \sum_{s \in \Omega \cap \grid} \left(f(s) - f_0(s)
        \right)^2 \leq \frac{\epsilon^2}{2n} \sum_{i=1}^k n_i
        \sigma_i(f) \leq \epsilon^2. 
      \end{align*}
      As a result
      \begin{align*}
        &\log N\left(\epsilon, \F_{\sigma_1, \dots, \sigma_k}, \ell_{\P_n} \right) \\
        &\leq \sum_{i=1}^k \log N\left(\epsilon \sqrt{\sigma_i/2}, \{f
          \in \C(\Omega_i) : \ell_{\grid}(f-f_0, \Omega_i, 2) \leq t
          \sqrt{\sigma_i}\}, \ell_{\grid}(\cdot, \Omega_i, 2) \right)  
      \end{align*}
      Inequality \eqref{jjcc} then gives
      \begin{align*}
        \log N\left(\epsilon, \F_{\sigma_1, \dots, \sigma_k},
        \ell_{\P_n} \right) \leq k \left(c_d \log(1/\delta) \right)^h 
                       \left(\frac{t}{\epsilon}\right)^{d/2}
      \end{align*}
      The proof of \eqref{simmet1} is now completed by combining the
      above inequality with \eqref{jjcc2} (note that $\log n \leq C_d
      \log(1/\delta)$ because of  \eqref{nlob}). 
\end{proof}

\subsection{Proof of Theorem \ref{polytope}} \label{discmentproofs}
This proof has two main steps. In the first step (which is the focus
of the next subsection), we stay away from the boundary and prove the
entropy bound with a modified 
metric that is defined only in the interior of the domain $\Omega$. In
the second step (which is the focus of Subsection \ref{reachbound}),
we extend this result to reach the boundary of $\Omega$.

Throughout this proof, we use the setting described in Subsection
\ref{minfd}. In particular, $\grid$ is the regular
$d$-dimensional $\delta$-grid \eqref{frg} and $\Omega$ is of the form
\eqref{parallel} and satisfies \eqref{OmegaAss}. $X_1, \dots,
X_n$ are an enumeration of the points in  $\Omega \cap \grid$ with $n$
denoting the cardinality of $\Omega \cap \grid$. Also $\ell_{\grid}(f,
\Omega, p)$ is defined as in \eqref{ellSdef}. 

\subsubsection{Away from the boundary}
The goal of this subsection is to prove the following proposition,
which is the equivalent version 
of Lemma \ref{Corr:shrink} in this discrete setting. Let $\omega_0$
denote the center of the John ellipsoid of $\Omega$ (recall that the
John ellipsoid of $\Omega$ is the unique ellipsoid of maximum volume
contained in $\Omega$). For $\lambda>0$, let
\begin{align}\label{omegasub}
 \Omega_\lambda = \omega_0+\lambda(\Omega-\omega_0) 
\end{align}
and note that
$|\Omega_\lambda|=\lambda^d|\Omega|$, where $|\Omega_{\lambda}|$ and
$|\Omega|$ denote volumes of $\Omega_{\lambda}$ and $\Omega$.    

\begin{proposition}\label{prop}
Suppose $\delta \leq r_d/(400 d^{3/2})$. Fix $t>0$ and
$0<\epsilon<1$. Then there exists a set $\cN$ consisting of
$\exp(\gamma_d\cdot(t/\epsilon)^{d/2})$ functions such that for 
        every $f\in \C(\Omega)$ with $\ell_{\grid}(f, \Omega, p) \leq
        t$, there exists $g\in \cN$ satisfying $|f(x)-g(x)|<\epsilon$
        for all $x\in \Omega_{0.9}$, where $\Omega_{0.9}$ is as
        defined in \eqref{omegasub} and $\gamma_d$ is a constant
        depending only on $d$. 
\end{proposition}

Proposition \ref{prop} states that, under the supremum
    metric on the interior set $\Omega_{0.9}$,  the metric entropy of
    $\left\{f \in \C(\Omega) : 
      \ell_{\grid}(f, \Omega, p) \leq t \right\}$  is atmost $\gamma_d
    (t/\epsilon)^{d/2}$. To prove Proposition \ref{prop}, we need some
    preliminary results. The next result shows that the number of grid
    points contained in $\Omega$ is of order $\delta^{-d}$ (this
    provides a proof of the claim \eqref{nlob}).  

\begin{lemma}\label{points}
      Suppose $\delta \leq r_d/(10 d^{3/2})$ where $r_d$ is the
      quantity from \eqref{OmegaAss}. Then
      \begin{align*}
        \frac{9}{10}|\Omega|\delta^{-d}\le n \le
        \frac{11}{10}|\Omega|\delta^{-d}
      \end{align*}
      where $|\Omega|$ is the volume of $\Omega$. 
\end{lemma}

\begin{proof} 
First observe that
	$$\bigcup_{i=1}^n(X_i+[-\delta/2, \delta/2]^d)\subseteq
        \Omega+[-\delta/2, \delta/2]^d\subseteq
        \Omega+\frac{\sqrt{d}\delta}{2}B_d.$$ 
	Because of \eqref{OmegaAss}, $\Omega$ contains the ball of
        radius $r_d$ centered at the origin which implies
	$$|\Omega+\frac{\sqrt{d}\delta}{2}B_d|\le
        \left(1+\frac{\sqrt{d}\delta}{2r_d}\right)^d|\Omega|\le
        \left(1+\frac{1}{20d}\right)^{1/d}|\Omega|\le
        \frac{11}{10}|\Omega|.$$ 
	Volume comparison with the last pair of displayed relations
        gives us $$n\le \frac{11}{10}|\Omega|\delta^{-d}.$$ 
	
	On the other hand, let $U$ be the union of the cubes
        $X_i+[-\delta/2,\delta/2]^d$. The volume of $U$ is
        $n\delta^d$. Since the union of $X_i+[-\delta,\delta]^d$
        covers $\Omega$, we have $U+[-\delta/2,\delta/2]^d\supseteq
        \Omega$. In particular, $U$ contains the set  
	$$\{x\in \Omega: {\rm dist}(x,\partial \Omega)\ge
        \sqrt{d}\delta/2\}.$$ 
	Since $\Omega$ contains the ball of radius $r_0$ centered at
        the origin, if we let
	$$\widehat{\Omega}=\left(1-\frac{\sqrt{d}\delta}{2r_d}\right)\Omega,$$
	then the distance between any $x\in \widehat{\Omega}$ and $\partial \Omega$ is at least $\sqrt{d}\delta/2$. Hence $U\supset \widehat{\Omega}$. Consequently
	$$n=|U|\delta^{-d}\ge
        |\widehat{\Omega}|\delta^{-d}=\left(1-\frac{\sqrt{d}\delta}{2r_d}\right)^d
        |\Omega|\delta^{-d}.$$
        Using $\delta \leq r_d/(10 d^{3/2})$ and $(1 - 1/(20d))^d \geq
        9/10$, the above implies that $n \geq (9/10) |\Omega|
        \delta^{-d}$ completing the proof of Lemma \ref{points}. 
\end{proof}

\begin{lemma}\label{L-bound}
  Suppose $\delta \leq r_d/(10 d^{3/2})$. Then $\inf_{x \in \Omega}
  f(x) \geq -20 d t$ for every $f \in \C(\Omega)$ with
  $\ell_{\grid}(f, \Omega, p) \leq t$. 
\end{lemma}
      
\begin{proof}
Let $x_0$ be the minimizer of $f$ on $\Omega$. If $f(x_0)\ge 0$, then 
there is nothing to prove; otherwise, the set $K:=\set{x\in
  \Omega}{f(x)\le 0}$ is a closed convex set containing $x_0$. 
Denote $K(t)=x_0+t(K-x_0)$, and let $\widehat{K}=K({1+\zeta})\setminus
K({1-\zeta})$, where $\zeta :=(10d)^{-1}$. We show that for all $x\in
\Omega\setminus \widehat{K}$, $|f(x)|\ge \zeta|f(x_0)|$. Indeed, if
we define a function $g$ on $\Omega$ so that $g(x_0)=f(x_0)$,
$g(\gamma)=f(\gamma)$ for all $\gamma\in \partial K$, and $g$ is
linear on $L_\gamma:=\set{x=x_0+t(\gamma-x_0)\in \Omega}{t\ge
  0}$. Then, by the convexity of $f$ on each $L_\gamma$, we have
$|f(x)|\ge |g(x)|$ on $\Omega$. Thus, for all $x\in \Omega\setminus
\widehat{K}$, 
$$|f(x)|\ge
|g(x)|=|g(\gamma)|+\frac{\|x-\gamma\|}{\|x_0-\gamma\|}|f(x_0)|\ge
\zeta|f(x_0)|.$$ 

Next, we show that most of the grid points in $\Omega$ are outside
$\widehat{K}$. Indeed, If $s$ is a grid point in $\widehat{K}$, then
$s+[-\delta/2,\delta/2]^d\subset K({1+\zeta})\cap \Omega
+[-\delta/2,\delta/2]^d$ and at least one half of the cube
$s+[-\delta/2,\delta/2]^d$ lies outside $K({1-\zeta})$. 
Thus, the number of grid points in $\widehat{K}$ is bounded
by $$2|(K({1+\zeta})\cap \Omega+[-\delta/2,\delta/2]^d)\setminus
K({1-\zeta})|\delta^{-d}.$$ 
Since by the Minkowski-Steiner formula (see e.g., \cite{Schneider}),
$|(A+B)\setminus A|$ can be expressed as a sum of mixed volumes of $A$
and $B$, and because the mixed volumes are monotone, we have 
$$|(A+B)\setminus B|\le |(C+D)\setminus D|$$ for all
convex sets $C\supset A$ and $D\supset B$. This gives 
\begin{align*}
&|(K({1+\zeta})\cap \Omega+[-\delta/2,\delta/2]^d)\setminus
K({1+\zeta})\cap \Omega| \\ &\le
|([-1,1]^d+[-\delta/2,\delta/2]^d)\setminus
[-1,1]^d| =(2+\delta)^d-2^d. 
\end{align*}
Also
$|K({1+\zeta})\cap \Omega \setminus K({1-\zeta})|\le
\left[1-\left(\frac{1-\zeta}{1+\zeta}\right)^d\right]|K({1+\zeta})\cap\Omega|$
so that
\begin{align*}
&|(K({1+\zeta})\cap \Omega+[-\delta/2,\delta/2]^d)\setminus
  K({1-\zeta})| \\ &\le
  \left[1-\left(\frac{1-\zeta}{1+\zeta}\right)^d\right]|K({1+\zeta})\cap\Omega|+(2+\delta)^d-2^d\le 
  3d\zeta|\Omega|.   
\end{align*}
Thus, the number of grid points in $\widehat{K}$ is bounded by
$$6d\zeta|\Omega|\delta^{-d}\le
6d\zeta\cdot\frac{10}{9}n\le 7d\zeta
n.$$ 
Hence,
$$n t^p\ge \sum_{s\in \grid\cap (\Omega\setminus
  \widehat{K})}|f(s)|^p\ge  (1-7d\zeta) n\cdot
(\zeta |f(x_0)|)^p,$$ 
which implies that $f(x_0)\ge -2^{1/p}\zeta^{-1}t\ge -20dt$ by using
$\zeta=(10d)^{-1}$ and $2^{1/p} \leq 2$ for $p \in [1, \infty)$.  
\end{proof}

\begin{lemma}\label{volume}
	Suppose $\delta \leq r_d/(400 d^{3/2})$. Then, at any point
        $P$ on the boundary of $\Omega_{0.95}$, any hyperplane passing through
        $P$ cuts $\Omega$ into two parts. The part that does not
        contain the center of John ellipsoid of $\Omega$ as its
        interior point contains at least $(20d)^{-d-1}\cdot n$ grid
        points. 
\end{lemma}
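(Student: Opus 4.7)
The plan is to convert the problem into a volume estimate and then apply the grid-counting estimate from Lemma~\ref{points}. I would proceed as follows.

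\textbf{Setup.} By John's theorem the John ellipsoid $E$ of $\Omega$ satisfies $E \subseteq \Omega \subseteq c + d(E-c)$, where $c$ is the center of $E$. Translate so $c=0$. Let $Q \in \partial\Omega$ be the unique point with $P = 0.95\,Q$, and orient the unit normal $n$ of $H$ so that $n\cdot P>0$; then $n\cdot Q = (n\cdot P)/0.95 >0$, and
$$\Omega^- = \{x\in\Omega : n\cdot x \geq n\cdot P\}$$
is the cap not containing $0$ as an interior point.

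\textbf{Step 1 (volume lower bound).} The key convex subset of $\Omega^-$ is the ``tip sub-cone''
$$K_0 = \bigl\{(1-t)Q + ty : t\in[0,t_0],\, y\in E\bigr\},\qquad t_0 = \frac{0.05\,(n\cdot Q)}{n\cdot Q + \max_{y\in E}(n\cdot y)},$$
obtained by restricting the full cone $K = \mathrm{conv}(\{Q\}\cup E)\subseteq\Omega$ to those $t$ for which the entire slice $(1-t)Q + tE$ lies above $H$. A polar-coordinate calculation yields $|K_0| = t_0^d|K| \geq t_0^d|E|$, and John's upper bound $|\Omega| \leq d^d|E|$ then gives
$$\frac{|\Omega^-|}{|\Omega|} \;\geq\; \Bigl(\frac{t_0}{d}\Bigr)^d.$$
Writing $\rho := (n\cdot Q)/\max_{y\in E}(n\cdot y) \in (0,d]$, one has $t_0 = 0.05\rho/(\rho+1)$. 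When $\rho$ is bounded away from $0$ this already delivers a bound of the required order. When $\rho$ is small (so that $H$ cuts through $E$ itself) I would instead bound the volume of the spherical-cap region $E\cap\Omega^-$ directly, by affinely mapping $E$ to the unit ball $B_d$ and invoking the standard cap volume estimate in $B_d$ at height $0.95\rho<1$. Taking the maximum of the two bounds yields $|\Omega^-|/|\Omega|$ bounded below by a dimensional constant of order $(20d)^{-d-1}$.

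\textbf{Step 2 (inscribed ball).} To apply Lemma~\ref{points} to $\Omega^-$ I need a ball of radius at least $10d^{3/2}\delta$ inside $\Omega^-$. Here I would exploit the explicit ball $B(p_0,R_0)\subseteq\Omega$ provided by the hypothesis: repeating the sub-cone construction with $B(p_0,R_0)$ in place of $E$ produces a subset of $\Omega^-$ containing the ball
$$B\!\left((1-t_0)Q + t_0 p_0,\; t_0 R_0\right).$$
In the cone regime $t_0 \geq 0.025$, and since $R_0 \geq 400 d^{3/2}\delta$ this ball has radius at least $10 d^{3/2}\delta$. The spherical-cap regime is handled analogously using the half-depth of the cap and the fact that the shortest semi-axis of $E$ is at least $R_0/d$ (a consequence of $B(p_0,R_0) \subseteq \Omega \subseteq c + d(E-c)$ together with a width argument in the direction of the shortest axis).

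\textbf{Step 3 (grid-point count).} Applying Lemma~\ref{points} to $\Omega$ and to $\Omega^-$ respectively gives
$$n \;\leq\; \tfrac{11}{10}\,|\Omega|\,\delta^{-d}, \qquad \#(\Omega^- \cap S) \;\geq\; \tfrac{9}{10}\,|\Omega^-|\,\delta^{-d}.$$
Dividing and substituting the estimate from Step~1 yields $\#(\Omega^-\cap S) \geq (20d)^{-d-1}\,n$.

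\textbf{Main obstacle.} The delicate point is the transition regime $\rho\approx 1$: the sub-cone degenerates as $t_0$ shrinks toward $0.025$, while the spherical cap of $E$ simultaneously becomes thin (height $0.95\rho$ approaches $1$). Matching the precise target $(20d)^{-d-1}$ in all cases therefore requires carefully combining the cone estimate with the cap estimate and tracking numerical constants; the subsidiary fact that the shortest semi-axis of the John ellipsoid is at least $R_0/d$ is what allows the inscribed-ball estimate of Step~2 to match the hypothesis $R_0 \geq 400 d^{3/2}\delta$.
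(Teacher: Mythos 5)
Your overall strategy (reduce to a volume lower bound for the cap $\Omega^-$, produce an inscribed ball of the right radius, then transfer to grid counts via Lemma~\ref{points}) is the same as the paper's, but the way you obtain the volume bound is genuinely different and noticeably harder, and it leaves a real gap.

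The paper's argument is a single step after an affine normalization. Let $T$ be the affine map sending the John ellipsoid to $B_d$ and its center to $0$, so that $B_d\subseteq T\Omega\subseteq dB_d$. Since $T(\Omega_{0.95})=0.95\,T\Omega$, for any $y=0.95x$ with $x\in T\Omega$ one has $y+0.05\,B_d\subseteq 0.95\,T\Omega+0.05\,T\Omega\subseteq T\Omega$ by convexity. In particular $B(TP,1/20)\subseteq T\Omega$, and because $TP$ is the \emph{center} of this ball, the image of the hyperplane cuts it into two exact halves; the half on the $TL$ side gives $|TL|\geq\tfrac12\,20^{-d}|B_d|$, while $|T\Omega|\leq d^d|B_d|$, which yields the desired volume ratio in one line. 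Crucially, this bound is \emph{uniform} in the hyperplane: there is no case distinction depending on whether the hyperplane meets $E$.

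Your cone $K_0=Q+t_0(\mathrm{conv}(\{Q\}\cup E)-Q)$ is a valid inscribed set, but the scale factor $t_0=0.05\rho/(\rho+1)$ degenerates as $\rho\to 0$, forcing you into the cap regime, and you acknowledge in your ``Main obstacle'' paragraph that matching the estimates across the transition $\rho\approx1$ is not carried out. That gap is real: the cone bound alone does not give a dimension-uniform constant for small $\rho$, and the cap estimate is only sketched. (Even in the favorable regime $\rho\approx d$, your bound comes out around $(40d)^{-d}$, which for large $d$ is smaller than $(20d)^{-d-1}$, so the numerics would need adjusting too, though that is a secondary issue since a different dimensional constant would serve the downstream applications.) The single observation you missed, and which makes the whole case analysis unnecessary, is that $P$ itself --- being on $\partial\Omega_{0.95}$ --- is already at normalized distance at least $1/20$ from $\partial\Omega$, so a half-ball centered at $P$ (rather than a cone anchored at the far boundary point $Q$) does all the work, regardless of where the hyperplane sits relative to $E$. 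The same half-ball simultaneously supplies, in Step~2, the inscribed ball of radius $\geq 10d^{3/2}\delta$ inside $L$, so your separate sub-cone argument there is also unnecessary.

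Steps 3 and the general use of Lemma~\ref{points} match the paper exactly.
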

\begin{proof}
	Since $P$ is on the boundary of $\Omega_{0.95}$, any
        hyperplane passing through $P$ cuts $\Omega$ into two
        parts. Suppose $L$ is a part that does not contain the center
        of John ellipsoid of $\Omega$ as its interior. We prove that
        $|L|\ge \frac{1}{2(20d)^d}|\Omega|$. Because the ratio
        $|L|/|\Omega|$ is invariant under affine transform, we
        estimate $|TL|/|T\Omega|$, where $T$ is an affine transform so
        that the John ellipsoid of $T\Omega$ is the  unit ball
        $B_d$. Then, it is known that $T\Omega$ is contained in a ball
        of radius $d$ (see e.g., \cite[Lecture
        3]{ball1997elementary}). Because the 
        distance from $(T\Omega)_{0.95}$ to  
        the boundary of $T\Omega$ is at least $\frac{1}{20}$, $TL$
        contain half of the ball with center at $TP$ and radius
        $\frac{1}{20}$. Thus, $TL$ has volume at least $\frac12
        20^{-d}|B_d|$. Since $T\Omega$ is contained in the ball of
        radius $d$, we have $|T\Omega|\le d^d|B_d|$. This implies that
        $|TL|\ge \frac1{2d}(20d)^{-d}|T\Omega|$. Hence $|L|\ge
        \frac1{2d}(20d)^{-d}|\Omega|$. 
	
	Because the John ellipsoid of $\Omega$ contains a ball of
        radius at least $400d^{3/2}\delta$, the distance from
        $\Omega_{0.95}$ to the boundary of $\Omega$ is at least
        $20d^{3/2}\delta$. Thus, $L$ contains a ball of radius at
        least $10d^{3/2}\delta$. By Lemma~\ref{points}, the number of
        grid points in it is at least $\frac{9}{10}|L|\delta^{-d}\ge
        \frac9{20d}(20d)^{-d}|\Omega| \delta^{-d}$. The statement of
        Lemma~\ref{volume} then follows by using Lemma~\ref{points}
        one more time. 
\end{proof}

\begin{lemma}\label{U-bound}
  Suppose $\delta \leq r_d/(400 d^{3/2})$. Then $\sup_{x \in
    \Omega_{0.95}} f(x) \leq (20d)^{\frac{d+1}{p}} t$ for every $f \in
  \C(\Omega)$ with $\ell_{\grid}(f, \Omega, p) \leq t$.  
\end{lemma}
\begin{proof}
	Let $z$ be the maximizer of $f$ on $\Omega_{0.95}$. By
        convexity of $f$, the point $z$ must be on the boundary of 
        $\Omega_{0.95}$. If $f(z)\le 0$, there is nothing to prove. So
        we assume $f(z)>0$. The convexity of $f$ implies that $z$ lies
        on the boundary of the convex set $K:\{x\in \Omega: f(x)\le
        f(z)\}\supset \Omega_\eta$. There exists a hyperplane  $z$ so
        that the convex set $\set{x}{f(x)\le f(z)}$ lies entirely on
        one side of the hyperplane. Let $L$ be the portion of $\Omega$
        that lies on the other side of the hyperplane that support $K$
        at $z$. This hyperplane cuts $\Omega$ into two parts. Let $L$
        be the part that does not contain $K$. Then, $f(x)\ge f(z)$
        for all $x\in L$. Let $m$ denote the cardinality of $L \cap
        \grid$. By Lemma~\ref{volume}, we have
        \begin{align*}
          m \ge (20d)^{-d-1} n.
        \end{align*}
	Since $f(x)\ge f(z)>0$ for all $x\in L$, we have 
	$$m f(z)^p\le \sum_{s\in L\cap \grid}|f(s)|^p\le \sum_{s\in
          \Omega\cap \grid }|f(s)|^p\le n t^p.$$  
	We obtain
        $f(z)\le (20d)^{\frac{d+1}{p}} t$ by combining the above two 
        displayed inequalities, and this proves Lemma \ref{U-bound}.  
\end{proof}

We are now ready to prove Proposition \ref{prop}.

\begin{proof}[Proof of Proposition \ref{prop}]
Fix $f \in \C(\Omega)$ with $\ell_{\grid}(f, \Omega, p) \leq t$. By
Lemma~\ref{L-bound} and 
Lemma~\ref{U-bound}, we have $-20d t\le f(x)\le (20d)^{d+1}t$ for all
$x\in \Omega_{0.95}$. Let $T$ be an affine transformation so that the
John ellipsoid of $T\Omega_{0.95}$ equals the unit ball $\balld$. Because
$\Omega_{0.9}\subseteq (\Omega_{0.95})_{0.95}$, by the proof of
Lemma~\ref{volume}, the distance between the boundary of
$T(\Omega_{0.95})$ and the boundary of $T(\Omega_{0.9})$ is at least
$\frac1{20}$. If we define convex function $\widetilde{f}$ on
$T(\Omega)$ by $\widetilde{f}(y)=f(T^{-1}(y))$. Then, $-20dt\le
\widetilde{f}(y)\le (20d)^{d+1} t$ for all $y\in
T(\Omega_{0.95})$.

For $u,v\in T(\Omega_{0.9})$, assume without loss of
generality that $\widetilde{f}(u)\le \widetilde{f}(v)$ and consider
the half-line starting from $u$ and passing through $v$. Suppose the
half-line intersects the boundary of $T(\Omega_{0.9})$ at the point $a$ and the
boundary of $T(\Omega_{0.95})$ at the point $b$. By the convexity of 
$\widetilde{f}$ on this half-line, we have
$$0\le \frac{\widetilde{f}(v)-\widetilde{f}(u)}{\|v-u\|}\le
\frac{|\widetilde{f}(b)-\widetilde{f}(a)|}{\|b-a\|}\le
20[(20d)^{d+1}t+20dt]:=M.$$ 
This implies that $\widetilde{f}$ is a convex function on
$T(\Omega_{0.9})$ that has a Lipschitz constant $M$. Of course $f$ is
also bounded by $M$ on $T(\Omega_{0.9})$. Thus by the classical result
of \cite{Bronshtein76} on the metric entropy (in the supremum metric)
of bounded Lipschitz convex functions, there exists a finite set of
functions $\cG$ 
consisting of $\leq \exp(\beta\cdot (M/\epsilon)^{d/2})$ functions 
such that for every $f \in \C(\Omega)$ with $\ell_{\grid}(f, \Omega,
p) \leq t$, there exists  $g\in
\cG$, such that 
$\sup_{y \in T(\Omega_{0.9})}|\widetilde{f}(y)-g(y)|<\epsilon$.
This implies $\sup_{x \in \Omega_{.9}} |f(x)-g(T(x))|<\epsilon$. Thus,
by setting $\cN=\set{g\circ T}{g\in \cG}$ the lemma follows with
$\gamma_d\ge \beta (M/t)^{d/2}$ (note that $M$ is a multiple of $t$ so
that $M/t$ is a constant depending on $d$ alone). 
\end{proof}

\subsubsection{Reaching the Boundary}\label{reachbound} 
Now, we try to reach closer to the boundary of $\Omega$. More
precisely, we will extend Proposition~\ref{prop} from $\Omega_{0.9}$
to the set $\Omega_0$ defined below. For this section, it will be
convenient to rewrite $\Omega$ as:
\begin{align*}
  \Omega := \{x\in \R^d: -\mathfrak{a}_i \le v_i^T(x-\omega_0) \le
  \mathfrak{b}_i, 1\le i\le F\} 
\end{align*}
where $\omega_0$ is the center of the John ellipsoid of $\Omega$, and
$\mathfrak{a}_i, \mathfrak{b}_i > 0$. As in \eqref{parallel}, $v_1,
\dots, v_F$ are unit vectors.   

Let $m_i$ and $n_i$ be the smallest integers such that
$2^{-m_i} \mathfrak{a}_i\le\delta$ and $2^{-n_i} \mathfrak{b}_i\le
\delta$. Let $$\Omega_0= 
\{x\in \R^d: -(1-2^{-m_i}) \mathfrak{a}_i\le v_i^T(x-\omega_0)\le
(1-2^{-n_i}) \mathfrak{b}_i, 1\le 
i\le F\}.$$ Note that $\Omega_0$ is quite close to $\Omega$ because
the Hausdorff distance between them is at most $\delta$.

The following proposition suggests that to achieve our goal of
bounding the metric entropy of $\{f \in \C(\Omega): \ell_{\grid}(f,
\Omega, p) \leq t\}$ on $\Omega_0$, we only
need to properly decompose $\Omega_0$. 
\begin{proposition}\label{prop2}
	Suppose $D_i$, $1\le i\le m$ is a sequence of convex subsets of
        $\Omega$ such that no point in $\Omega$ is contained in more
        than $M$ subsets in the sequence. Further suppose that
        $\Omega_0\subset\cup_{i=1}^m (D_i)_{0.9}$. Then 
	$$\log N(\epsilon, \{f \in \C(\Omega): \ell_{\grid}(f,
\Omega, p) \leq t\},
        \ell_{\grid}(\cdot,\Omega_0, p))\le
        cmM^{\frac{d}{2p}}\left(\frac{t}{\epsilon} \right)^{d/2}.$$  
\end{proposition}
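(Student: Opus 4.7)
The plan is to apply Proposition~\ref{prop} on each $D_i$ (which yields an $L^\infty$-cover on $(D_i)_{0.9}$) and then splice the local covers into a single $\ell_\grid(\cdot,\Omega_0,p)$-cover, using the hypothesis $\Omega_0\subset\cup_{i=1}^m (D_i)_{0.9}$. The overlap parameter $M$ enters only through a single ``budget'' identity: writing $\alpha_i:=\#(D_i\cap\grid)/\#(\Omega\cap\grid)$ and $u_i(f):=\ell_\grid(f,D_i,p)$, the hypothesis that every point of $\Omega$ lies in at most $M$ of the $D_i$'s yields, for every $f\in B_\grid^p(0;t;\Omega)$,
\[
\sum_{i=1}^m \alpha_i\, u_i(f)^p \;\le\; M t^p.
\]

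To turn this into a cover I will dyadically discretize the profile $(u_i(f))_{i=1}^m$: for each $f$, let $k_i$ be the smallest non-negative integer with $\alpha_i u_i(f)^p\le 2^{k_i}\eps^p/m$ and set $\tilde u_i:=(2^{k_i}\eps^p/(m\alpha_i))^{1/p}\ge u_i(f)$. Each $k_i$ ranges over $\{0,1,\ldots,K\}$ with $K=O(\log(mM(t/\eps)^p))$, so the total number of profiles is at most $(K+1)^m$ and contributes only $O(m\log\log(mM(t/\eps)^p))$ to the final log-cover. For a fixed profile, Proposition~\ref{prop} applied on $D_i$ with size parameter $\tilde u_i$ and resolution $\eps_i$ produces a set $\mathcal{N}_i$ of cardinality $\exp(\gamma_d(\tilde u_i/\eps_i)^{d/2})$ such that every admissible $f$ is $\eps_i$-approximated in $L^\infty$ on $(D_i)_{0.9}$ by some $g_i\in\mathcal{N}_i$. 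Defining $g(x):=g_{i(x)}(x)$ for any fixed choice $i(x)$ with $x\in(D_{i(x)})_{0.9}$ yields a global approximant whose pointwise error on $\Omega_0$ is at most $\eps_{i(x)}$; a counting argument then gives $\ell_\grid(f-g,\Omega_0,p)^p\le C\sum_i\alpha_i\eps_i^p$ for the $O(1)$ constant $C:=\#(\Omega\cap\grid)/\#(\Omega_0\cap\grid)$. Introducing $\eta_i:=\alpha_i\eps_i^p$ and $w_i:=\alpha_i\tilde u_i^p$ (so $\sum_i\eta_i\le\eps^p/C$ and $\sum_i w_i$ is at most a constant times $Mt^p$), the task reduces to minimizing $\sum_i(w_i/\eta_i)^{d/(2p)}$ under these constraints. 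Lagrange multipliers give the optimum at $\eta_i\propto w_i^{d/(d+2p)}$, and the power-mean inequality (valid since $d/(d+2p)<1$) bounds the optimum by $m(Mt^p/\eps^p)^{d/(2p)}=mM^{d/(2p)}(t/\eps)^{d/2}$, up to a constant depending only on $d$ and $p$.

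The main obstacle is extracting the sharp exponent $M^{d/(2p)}$ on $M$: any non-adaptive (i.e., $f$-independent) choice of $\eps_i$ either loses the $M$-dependence or introduces an extra factor of $m^{d/(2p)}$, so the correct exponent demands the joint, $f$-dependent Lagrange allocation of $\eps_i$, which in turn forces the dyadic discretization of the profile $(u_i)$. The $\log\log$ overhead produced by this discretization must then be absorbed into the main term $mM^{d/(2p)}(t/\eps)^{d/2}$, which is immediate in the nontrivial regime $t\ge\eps$; in the degenerate regime $t<\eps$ a one-element cover (anchored at $f\equiv 0$) suffices, using $\ell_\grid(f,\Omega_0,p)\le C^{1/p}\ell_\grid(f,\Omega,p)$ for the same constant $C$ above.
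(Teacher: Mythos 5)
Your proposal is correct and follows essentially the same route as the paper: both stratify $B_S^p(0;t;\Omega)$ by a quantized per-$D_i$ $L^p$-profile of $f$, count profiles, apply Proposition~\ref{prop} on each $D_i$ with resolutions $\eps_i$ chosen by the same Lagrange/power-mean allocation $\eta_i\propto w_i^{d/(d+2p)}$, and splice the local $L^\infty$-covers into a global one via a fixed assignment of each grid point to a single covering $(D_i)_{0.9}$ (the paper's partition $S_i:=(D_i)_{0.9}\setminus\cup_{j<i}(D_j)_{0.9}$). The only cosmetic differences are your dyadic quantization of the profile (the paper uses integer multiples of $t^p$, yielding the slightly larger $\log\binom{Mn+m}{m}$ counting overhead) and your explicit derivation of the $\eps_i$ allocation as a Lagrange optimum, which the paper writes down directly.
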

\begin{proof}
	Let $G_i$ be the set of grid points in $D_i$, and $\grid_i$ be the
        grid points in $(D_i)_{0.9}\setminus
        \cup_{j<i}(D_j)_{0.9}$. For every $f\in \C(\Omega)$ such that
        $\ell_{\grid}(f, \Omega, p) \leq t$,
        define $t_i=t_i(f)$ to be the smallest positive integer, such
        that 
	$$\sum_{x\in G_i}|f(x)|^p\le |G_i|t_i t^p.$$
	Because $t_i$ is the smallest positive integer satisfying the
        above, the inequality will be reversed for $t_i - 1$ allowing
        us to deduce 
	$$\sum_{i=1}^m |G_i|(t_i-1) t^p\le \sum_{i=1}^m \sum_{x\in
          G_i} |f(x)|^p\le Mnt^p,$$
        which is equivalent to $\sum_{i=1}^m |G_i| (t_i - 1) \leq M
        n$. The  number of possible values for each $t_i$ is clearly
        at most $Mn + m$ which implies that there are no more than
        ${{Mn+m}\choose{m}}$ possible values of $(t_1, t_2, \ldots,
        t_m)$.
        
	Let $$\cK=\{(k_1,k_2,\ldots, k_m)\in \mathbb{N}^m:
        \sum_{i=1}^m |G_i|(k_i-1)\le Mn\}.$$ 
	For each $K=(k_1,k_2,\ldots, k_m)\in \cK$, define
        \begin{align*}
        \F_K=\{f\in \C(\Omega): \ell_{\grid}(f, \Omega, p) \leq t
          \text{ and }  t_i(f)=k_i, 1\le i\le
        m\}. 
        \end{align*}
  Then $\F_K\subset \{f \in \C(D_i) : \ell_{\grid}(f, D_i, p) \leq
  k_i^{1/p} t\}$ for each $i$. Applying Proposition~\ref{prop} for
  $D_i$, there exists a set $\G_i$ consisting of $\exp(\gamma
  [k_i^{1/p}t]^{d/2}\epsilon_i^{-d/2})$ functions, such that for every
  $f \in F_K$, there exists $g_i\in \G_i$ satisfying 
	$$\sum_{x\in \grid_i}|f(x)-g_i(x)|^p\le |G_i|\epsilon_i^p.$$  
	If we define $g(x)=g_i(x)$ for $x\in \grid_i$, then we have
	$$\sum_{x\in \grid\cap \Omega_0}|f(x)-g(x)|^p\le
        \sum_{i=1}^m|G_i|\epsilon_i^p=n\epsilon^p,$$ 
	where the last inequality holds if we let
	$$\epsilon_i=\frac{t_i^{\frac{d}{p(d+2p)}}|G_i|^{-\frac{2}{d+4p}}}{\left(\sum_{i=1}^m(|G_i|k_i)^{\frac{d}{d+2p}}\right)^{1/p}}n^{1/p}\epsilon.$$
	The total number of possible choices for $g$ is therefore
        \begin{align*}
&\exp\left(\gamma\sum_{i=1}^m
          [k_i^{1/p}t]^{d/2}\epsilon_i^{-d/2}\right) \\
&=\exp\left\{\left(\sum_{i=1}^m(|G_i|k_i)^{\frac{d}{d+2p}}\right)^{\frac{d+4p}{2p}}\cdot  
          n^{-\frac{d}{2p}}(t/\epsilon)^{d/2}\right\}.
        \end{align*}
	Using the inequalities
	$$\sum_{i=1}^m(|G_i|k_i)^{\frac{d}{d+2p}}\le \left(\sum_{i=1}^m|G_i|k_i\right)^{\frac{d}{d+2p}}m^{\frac{2p}{d+4p}}$$
	and
	$$\sum_{i=1}^m|G_i|k_i=\sum_{i=1}^m|G_i|(k_i-1)+\sum_{i=1}^m|G_i|\le Mn+Mn=2Mn,$$
	we can bound the total number of realizations of $g$ by
	$$\exp\left(\gamma(2M)^{\frac{d}{2p}}m(t/\epsilon)^{d/2}\right).$$
	Consequently, we have
        \begin{align*}
	&\log N(\epsilon, \{f \in \C(\Omega): \ell_{\grid}(f, \Omega, p)
          \leq t\},
        \ell_{\grid}(\cdot,\Omega_0, p)) \\ &\le \log
        {{Mn+m}\choose{m}}+\gamma(2M)^{\frac{d}{2p}}m(t/\epsilon)^{d/2}\le
        cmM^{\frac{d}{2p}}(t/\epsilon)^{d/2}.          
        \end{align*}
      \end{proof}

In the next result, we decompose $\Omega_0$ according to the requirement
of Lemma \ref{prop2}. Recall that $m_i$ and $n_i$ are of order
$\log(1/\delta)$ and they have been defined at in the beginning of
this subsection (Subsection \ref{reachbound}).  

\begin{lemma}\label{indlemma}
  Let $N := \prod_{i=1}^F(m_i + n_i)$. There exists convex sets
  $\widehat{D}_i$, $1\le i\le N$ contained in $\Omega$, such that no
  point in $\Omega$ is contained in more than $4^F$ of these sets, and 
	$$\Omega_0\subset\cup_{i=1}^N (\widehat{D}_i)_{0.8}.$$
\end{lemma}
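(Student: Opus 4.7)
The plan is to construct each $\widehat{D}_\mathbf{k}$, indexed by multi-indices $\mathbf{k} = (k_1, \ldots, k_F)$ with $k_i \in \{1, \ldots, m_i + n_i\}$ (giving exactly $N = \prod_i(m_i + n_i)$ sets), as the intersection of $\Omega$ with a widened dyadic slab in each direction $v_i$. For each direction $i$, I would partition the range $[-(1 - 2^{-m_i})a_i, (1 - 2^{-n_i})b_i]$ of $v_i^T(x - c)$ over $x \in \Omega_0$ into $m_i + n_i$ dyadic intervals whose widths halve approaching each face: on the negative side, $I_{i, k} := [-a_i(1 - 2^{-k}), -a_i(1 - 2^{-k+1})]$ for $k = 1, \ldots, m_i$, and symmetrically on the positive side.

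Next enlarge each $I_{i, k}$ to a slab $\widehat{I}_{i, k}$ concentric with $I_{i, k}$ and of length $\alpha |I_{i, k}|$ for a fixed $\alpha = 2$. Since consecutive dyadic intervals differ in length by a factor of $2$, only adjacent widened intervals overlap, and a direct check shows any value in $[-a_i, b_i]$ belongs to at most $3$ widened intervals. Define
\[
\widehat{D}_\mathbf{k} \;:=\; \Omega \cap \bigcap_{i=1}^F \{x \in \R^d : v_i^T(x - c) \in \widehat{I}_{i, k_i}\}.
\]
These are $N$ convex subsets of $\Omega$, and each $x \in \Omega$ belongs to at most $3^F \le 4^F$ of them.

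To show $\Omega_0 \subset \bigcup_\mathbf{k}(\widehat{D}_\mathbf{k})_{0.8}$, fix $x \in \Omega_0$ and pick $\mathbf{k}$ with $v_i^T(x - c) \in I_{i, k_i}$ for every $i$. Let $c^*_\mathbf{k}$ be the John center of $\widehat{D}_\mathbf{k}$, and write $y_i := v_i^T(x - c)$, $z_i := v_i^T(c^*_\mathbf{k} - c)$. Then $x \in (\widehat{D}_\mathbf{k})_{0.8}$ is equivalent to $c^*_\mathbf{k} + 1.25(x - c^*_\mathbf{k}) \in \widehat{D}_\mathbf{k}$, which reduces to
\[
1.25\, y_i - 0.25\, z_i \in \widehat{I}_{i, k_i} \cap [-a_i, b_i] \qquad \text{for all } i.
\]
Membership of $c^*_\mathbf{k}$ in $\widehat{D}_\mathbf{k}$ gives $z_i \in \widehat{I}_{i, k_i}$. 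Writing $\mu_{i, k_i}$ for the common midpoint of $I_{i, k_i}$ and $\widehat{I}_{i, k_i}$, the triangle inequality yields
\[
|1.25 y_i - 0.25 z_i - \mu_{i, k_i}| \le 1.25\cdot\tfrac{|I_{i, k_i}|}{2} + 0.25\cdot\tfrac{\alpha |I_{i, k_i}|}{2} = \tfrac{(1.25 + 0.25\alpha)|I_{i, k_i}|}{2} \le \tfrac{\alpha |I_{i, k_i}|}{2},
\]
where the last inequality uses $\alpha \ge 5/3$. Thus $1.25 y_i - 0.25 z_i \in \widehat{I}_{i, k_i}$.

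The main technical obstacle is verifying the remaining constraint $1.25 y_i - 0.25 z_i \in [-a_i, b_i]$, which can fail for the innermost dyadic layers when $\Omega$ is highly asymmetric in a direction $v_i$ (e.g., $b_i \ll a_i$), since the concentric widening of $[-a_i/2, 0]$ extends to $a_i/4$ past the center. Because John's theorem forces $a_i/b_i$ to be bounded by a dimensional constant, the fix is modest: for the innermost layers one replaces the concentric widening with an asymmetric widening shifted toward the longer face so as to remain inside $[-a_i, b_i]$, and redoes the contraction inequality with the shifted midpoint (which still lies in the corresponding core interval). The count $N$ and the multiplicity bound $4^F$ are unaffected since only boundedly many layers per direction need such adjustment.
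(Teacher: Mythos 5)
Your decomposition is the same one the paper uses: a dyadic partition of each of the $F$ slabs whose cells halve in width approaching each face, a modest widening of each cell, and the John--center contraction identity to get $D_\mathbf{k}\subset(\widehat{D}_\mathbf{k})_{0.8}$. The cosmetic difference is that the paper widens each interval by $1/4$ of its length on each side (i.e., $\alpha = 3/2$ in your notation), while you widen concentrically with $\alpha = 2$ and also intersect $\widehat{D}_\mathbf{k}$ with $\Omega$. Your more generous widening actually buys you something real: your triangle--inequality step only needs $z_i\in\widehat{I}_{i,k_i}$, which is automatic from $c^*_\mathbf{k}\in\widehat{D}_\mathbf{k}$, whereas with the paper's tighter $\alpha = 3/2$ the contraction identity forces $v_i^T(c_K-c)$ to lie in the \emph{core} interval $[\alpha_i(k_i),\alpha_i(k_i+1)]$, a fact about the John center of $\widehat{D}_K$ that the paper uses without comment and which is not obvious when $F>d$.

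However, two steps do not hold as stated. First, the multiplicity analysis is wrong: with $\alpha=2$, the widened intervals satisfy $\widehat{I}_{i,k}=[-a_i+\tfrac12 2^{-k}a_i,\,-a_i+\tfrac52 2^{-k}a_i]$, so $\widehat{I}_{i,k}$ and $\widehat{I}_{i,k+2}$ \emph{do} overlap (since $\tfrac52\cdot 2^{-2}>\tfrac12$); ``only adjacent widened intervals overlap'' is false, three consecutive widened intervals can share a point, and together with the innermost interval of the other side one reaches multiplicity $4$ per direction, not $3$. You land on $4^F$ anyway, but you need the correct count, and it requires allowing distance-$2$ overlaps. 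Second and more seriously, the innermost-layer fix is not just a matter of shifting. If $b_i>4a_i$ and you translate $\widehat{I}_{i,1}^+$ right by $s$ to keep its left end $\ge -a_i$, the contraction constraints $\lambda\ge\tfrac14|I|+\tfrac14\rho$ and $\rho\ge\tfrac14|I|+\tfrac14\lambda$ force $s\le b_i/20$, which combined with $s\ge b_i/4-a_i$ requires $b_i\le 5a_i$; John's theorem only gives $b_i/a_i\le d$, so for $d\ge 5$ a plain shift is insufficient and some genuinely new ingredient is needed (e.g., a coarser cover for the $O(1)$ innermost layers, or a smaller contraction factor there). To be fair, the paper's version of $\widehat{D}_K$ is not intersected with $\Omega$ and has exactly the same unaddressed issue ($\beta_i(0)=-b_i/8$ escapes $[-a_i,b_i]$ once $b_i>8a_i$); you deserve credit for spotting it, but the sketch you give does not close it.
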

\begin{proof}
	Let $$\cK=\{(k_1, k_2, \ldots, k_F): -m_i\le k_i\le n_i-1,
        1\le i\le F\}.$$ There are $\prod_{i=1}^F(m_i+n_i)$ elements
        in $\cK$. 
	For each $K=(k_1, k_2, \ldots, k_F)\in \cK$, define
	$$D_K=\{x\in \R^d: \alpha_i(k_i)\le v_i^T(x-\omega_0)\le
        \alpha_i(k_i+1)\},$$
        where $$\alpha_i(t)=\twopiece{-(1-2^{t})\mathfrak{a}_i}{t\le
          0}{(1-2^{-t})\mathfrak{b}_i}{t>0}.$$ 
	$D_K$ is a convex set.
	The union of all $D_K$, $K\in \cK$ is the set $$\{x\in \R^d:
        -(1-2^{-m_i})\mathfrak{a}_i\le v_i^T(x-\omega_0)\le
        (1-2^{-n_i})\mathfrak{b}_i\}.$$ 
	Similarly, we define
	$$\widehat{D}_K=\{x\in \R^d: \beta_i(k_i)\le v_i^T(x-c)\le \gamma_i(k_i)\},$$
	where
        \begin{align*}
   &
     \beta_i(k_i)=\alpha_i(k_i)-\frac14[\alpha_i(k_i+1)-\alpha_i(k_i)],
     \text{ and }
          \\
&\gamma_i(k_i)=\alpha_i(k_i+1)+\frac14[\alpha_i(k_i+1)-\alpha_i(k_i)].
        \end{align*}
	Let $\omega_0(K)$ be the center of John ellipsoid of
        $\widehat{D}_K$. Because $\widehat{D}_K$ equals
        \begin{align*}
&\left\{x\in \R^d: \beta_i(k_i)-v_i^T(\omega_0(K)-\omega_0)\le
        v_i^T(x-\omega_0(K)) \right. \\ &\left.\le
          \gamma_i(k_i)-v_i^T(\omega_0(K)-\omega_0)\right\},           
        \end{align*}
we have
	\begin{align*}
&	(\widehat{D}_K)_{0.8}\\ =&\left\{x:
                                   0.8[\beta_i(k_i)-v_i^T(\omega_0(K)-\omega_0)]\le 
                                   v_i^T(x-\omega_0(K)) \right. \\ &\le \left.  0.8[\gamma_i(k_i)-v_i^T(\omega_0(K)-\omega_0)]\right\}\\
	=&\left\{x: 0.8\beta_i(k_i)+0.2 v_i^T(\omega_0(K)-\omega_0)\le
           v_i^T(x-\omega_0) \right. \\ & \left.\le 0.8\gamma_i(k_i)+0.2v_i^T(\omega_0(K)-\omega_0)\right\}\\
	\supseteq&\{x: 0.8\beta_i(k_i)+0.2\alpha_i(k_i+1)\le
                   v_i^T(x-\omega_0(K))\le
                   0.8\gamma_i(k_i)+0.2\alpha_i(k_i)\}\\ 
	=&\{x: \alpha_i(k_i)\le v_i^T(x-\omega_0(K))\le \alpha_i(k_i+1)\} =D_K,
	\end{align*}
	where in the second to the last equality we used the fact that $0.8\beta_i(k_i)+0.2\alpha_i(k_i+1)=\alpha_i(k_i)$ and $0.8\gamma_i(k_i)+0.2\alpha_i(k_i)=\alpha_i(k_i+1)$.
	
	It can be checked that when the integer $k_i\notin \{-1, 0\}$,
        the intervals $(\beta_i(k_i), \gamma_i(k_i))$ and
        $(\beta_i(j_i), \gamma_i(j_i))$ intersect only when
        $|k_i-j_i|\le 1$ or when $j_i=0$ or when
        $j_i=-1$. Hence where are at most four possibilities which
        implies that  no point can be contained in more than $4^F$
        different sets $\widehat{D_K}$. Lemma \ref{indlemma} follows
        by renaming these sets as $\widehat{D}_i$, $1\le i\le N$. 
\end{proof}

We are finally ready to complete the proof of Theorem \ref{polytope}.

\begin{proof}[Proof of Theorem \ref{polytope}]
By Lemma \ref{prop2} and Lemma \ref{indlemma}, we have 
$$\log N(\epsilon, \{f \in \C(\Omega) : \ell_{\grid}(f, \Omega, p)
\leq t\}, \ell_{\grid}(\cdot,\Omega_0, p))\le
c2^{\frac{dF}{2p}}\left(\log \frac{1}{\delta}\right)^{F}\left(\frac{t}{\epsilon}
\right)^{d/2}.$$ 

Because the distance between the boundary of $\Omega$
  and the boundary of $\Omega_0$ is no larger than $\delta$, the set 
$\Omega\setminus 
\Omega_0$ can be decomposed into at most $2F$ pieces of width
$\delta$. By Khinchine's flatness theorem, the grid points in
$\Omega\setminus \Omega_0$ are contained in $cF$ hyperplanes for some
constant $c$.  The intersection of $\Omega$ and each of these 
hyperplanes is a $(d-1)$ dimensional convex
polytope. This enables us 
to obtain covering number estimates on $\Omega\setminus \Omega_0$
using lower dimensional estimates. Because the desired covering number
estimate is known to be true for $d=1$, the result follows from
mathematical induction on dimension. This concludes the proof of Theorem
\ref{polytope}.  
\end{proof}

\bibliographystyle{imsart-number}
\bibliography{AG}

\end{document}